
\documentclass[11pt]{article}%
\usepackage{fullpage,graphicx,psfrag,amsmath,amsfonts,verbatim}
\usepackage[small,bf]{caption}
\usepackage{amsmath, amssymb, amsthm, amsfonts, amsxtra, bbm, mathrsfs,
stmaryrd}
\usepackage{algorithm}
\usepackage{algorithmic}
\usepackage[CJKbookmarks=true, bookmarksnumbered=true, bookmarksopen=true,
bookmarks=false, colorlinks=true, citecolor=red, linkcolor=blue,
anchorcolor=red, urlcolor=blue ]{hyperref}
\usepackage{enumitem, hyperref, listings}
\usepackage{graphicx,epstopdf,caption,subcaption}
\usepackage{tikz-cd}
\usepackage{amsmath}
\usepackage{amsfonts}
\usepackage{amssymb}
\usepackage{mathtools}
\usepackage{graphicx}%
\setcounter{MaxMatrixCols}{30}
\providecommand{\U}[1]{\protect\rule{.1in}{.1in}}
\theoremstyle{plain}
\newtheorem{theorem}{Theorem}
\newtheorem{lemma}{Lemma}

\newtheorem{assumption}{Assumption}
\newtheorem{prop}{Proposition}
\newtheorem{remark}{Remark}
\newtheorem{corollary}{Corollary}
\theoremstyle{definition}

\newtheorem{definition}{Definition}[section]

\def\EE{\mathbb{E}}

\def\PP{\mathbb{P}}

\def\RR{\mathbb{R}}

\def\ZZ{\mathbb{Z}}

\def\calF{\mathcal{F}}

\def\calL{\mathcal{L}}
\def\calM{\mathcal{M}}

\def\calP{\mathcal{P}}

\def\calS{\mathcal{S}}

\def\calW{\mathcal{W}}

\def\bI{\boldsymbol{I}}
\def\bJ{\boldsymbol{J}}

\def\bX{\mathbf{X}}

\def\bpi{\boldsymbol{\pi}}
\def\bPi{\boldsymbol{\Pi}}
\def\balpha{\boldsymbol{\alpha}}
\def\bbeta{\boldsymbol{\beta}}
\def\bmu{\boldsymbol{\mu}}
\def\bnu{\boldsymbol{\nu}}
\def\bx{\mathbf{x}}
\def\calMM{\boldsymbol{\calM}}

\def\1{\mathbbm{1}}

\bibliographystyle{alpha}
\begin{document}

\title{Distributionally Robust Martingale Optimal Transport}
\author{Zhengqing Zhou, Jose H. Blanchet and Peter W. Glynn  \thanks{Zhengqing Zhou is with the Department of Mathematics, Stanford University. Jose H. Blanchet and Peter W. Glynn are with the Department of Management Science \& Engineering, Stanford University.  \newline
Email:  \href{zqzhou@stanford.edu}{zqzhou@stanford.edu},
\href{jose.blanchet@stanford.edu}{jose.blanchet@stanford.edu}, \href{glynn@stanford.edu}{glynn@stanford.edu}.}}
\maketitle

\begin{abstract}
We study the problem of bounding path-dependent expectations (within any finite time horizon $d$) over the class of discrete-time martingales whose marginal
distributions lie within a prescribed tolerance of a given collection of
benchmark marginal distributions. This problem is a
relaxation of the martingale optimal transport (MOT) problem and is motivated
by applications to super-hedging in financial markets. We show that the empirical version of our relaxed MOT problem can be approximated within
$O\left(  n^{-1/2}\right)$ error where $n$ is the number of samples of each
of the individual marginal distributions (generated independently) and using a suitably constructed finite-dimensional linear programming problem. 


\end{abstract}

\section{Introduction}

Martingale Optimal Transport (MOT), an optimization problem that is motivated by  model-free pricing on exotic options in mathematical finance, has been widely studied in the past few years \cite{model_indep_bound_Beiglbock2013, MOT_continuous_Dolinsky2014, stochasticControl_MOT_galichon2014,  robustPriceBound_Hobson2015, MOT_beiglbock2016, multi_MOT_Lim2016, completeDualMOT_Touzi2017, OPTandSkorokhodEmbedding_Beiglbock2017, multi_MOT_Lim2016, MOT_generalDimension_Lim2019, NUTZ_multiMOT_2020}. MOT can be viewed as 
the standard optimal transport problem with a martingale constraints on the underlying measure. Despite the theoretical appeal, the computational analysis of MOT is challenging especially for the multi-period case. For example, in general, as far as we understand, there is no convergence rate analysis in the existing literature. Motivated by the computational challenges of MOT, we introduce a new formulation of MOT, which we shall refer to as $\varepsilon$-distributionally robust martingale optimal transport problem ($\varepsilon$-DRMOT). Our relaxation recovers the MOT problem by setting $\varepsilon=0$. 

In contrast to the standard MOT, our formulation allow the marginal distributions at each time to be misspecified by a tolerance error controlled by $\varepsilon$. From a modeling standpoint, in the finance setting, for instance, this relaxation may capture that available option prices may not be sufficient to recover the marginal distributions of the implicit market (risk neutral) measure or that there are price distortions, such as bid-ask spreads, which make it impossible to realistically impose a specific marginal distribution. Moreover, there are other potential applications, apart from finance, in which martingale-like constraints are natural. For example, linear regression models, which are widely used in machine learning can also be interpreted basically as a martingale constraint. In these machine learning settings, allowing for misspecification in the marginal distributions is common and therefore our proposed framework can be used to inform distributionally robust formulations involving decisions constrained by conditional information. Recent efforts in this area, involving optimal transport and distributional uncertainty regions which are informed by conditional information, are discussed in \cite{jose2020DRO_conditional_estimation}. Another active research area that could benefit from our development corresponds to multi-stage distributionally robust optimization. A significant challenge in this area is enforcing adaptivity of the adversary. Using martingales (which are linear constraints) to enforce adaptivity is both convenient and natural in many settings.

In addition to introducing $\varepsilon$-DRMOT, we also develop  numerical discretization schemes which can be used to approximate $\varepsilon$-DRMOT in the multi-period setting. In order to provide a precise summary of our contributions, let us first introduce the  mathematical description of $\varepsilon$-DRMOT in the following section.

\subsection{The mathematical formulation and its motivation}
We start by describing the set of $d$-step martingale measures defined on a product space $\Omega = \bigtimes_{i=1}^d\Omega_i$ that is endowed with the Borel $\sigma$-field, where $\Omega_i\subseteq\RR$ for $1\leq i\leq d$. For a given measurable space $S$, let $\calP(S)$ denote the set of probability measures on $S$. Also, let $\bX:\Omega \rightarrow \Omega$ be the mapping for which $\bX(\omega) = \omega$ (this is also called the canonical embedding) and write $\bX = (X_1,\cdots, X_d)$. Given $\pi\in\mathcal{P}\left(  \Omega\right)$,  we define $\PP_{\pi}(\cdot)$ via $\PP_{\pi}(\bX\in\cdot) = \pi(\cdot)$ and let $\EE_{\pi}(\cdot)$ be its associated expectation operator. Moreover, we 
 adapt the convention to use lowercase alphabet when we write the expectation in terms of integral. For instance,  $\EE_{\pi}\left[f(\bX)\right] = \int f(\bx)\pi(d\bx)$.  Then, the space of $d$-step \textit{martingale measures} on $\Omega$ is given by 
\begin{equation*}
\mathcal{M}\left(\Omega\right)  =\left\lbrace \pi
\in\mathcal{P}\left(\Omega\right)  :\mathbb{E}%
_{\pi}\left[ X_{k+1}|\calF_k\right]  =X_k, \pi-\textrm{a.e. for } 1\leq k\leq d-1\right\rbrace,
\end{equation*}
where $\calF_k = \sigma (X_1,\cdots, X_k)$ is the canonical filtration ($1\leq k\leq d-1$). For any $\pi\in\calP(\Omega)$, let $\pi_{i}$ be its 
$i$-th marginal measure (which is supported on $\Omega_{i}$), so that $\pi_i(\cdot)=\PP_{\pi}(\bX_i\in\cdot)$. We say that the joint probability measure $\pi$ is \textit{consistent with the sequence of marginals} $\bmu = (\mu_1,\cdots,\mu_d)\in\bigtimes_{i=1}^d\calP(\Omega_i)$ if $\pi_i = \mu_i$ for $1\leq i\leq d$, and then we write $\pi\in\Pi(\bmu)$. In other words, $\Pi(\boldsymbol{\mu})$ denote the set of probability measures on $\Omega$ such that its marginal measures are exactly $\mu_1,\cdots, \mu_d$ - in this order. Then, \begin{equation*}
    \calM_{\bmu}(\Omega) := \calM(\Omega)\cap\Pi(\bmu)
\end{equation*}
is the set of martingale measures consistent with $\bmu$. It is known in \cite{strassen1965} that $\calM_{\bmu}(\Omega)\neq\varnothing$ if and only if
\begin{equation*}
    \mu_1\preceq_c\cdots\preceq_c\mu_d,
\end{equation*}
where $\preceq_c$ denotes convex ordering (reviewed in Section \ref{sect.DRMOT_compact}). Given the convex ordering, $\calM_{\bmu}(\Omega)$ is typically not a singleton. For a payoff function $f:\Omega \rightarrow \RR$, we are then naturally led to the study of the worst case upper bound
\begin{equation}\label{def.standard_MOT}
    \sup_{\pi\in\calM_{\bmu}(\Omega)}\EE_{\pi}\left[f(\bX)\right].
\end{equation}
The above upper bound recovers the standard MOT formulation discussed in, for example, \cite{model_indep_bound_Beiglbock2013, robustPriceBound_Hobson2015,completeDualMOT_Touzi2017}.

When the $\mu_i$'s fail to satisfy the convex ordering, it is appropriate to consider a relaxation of \eqref{def.standard_MOT}. The violation of convex ordering is very common. For instance, even if $\bmu$ admits the convex ordering, the empirical measures induced by each marginals typically fail to satisfy the convex ordering. In order to model a relaxation of \eqref{def.standard_MOT}, we begin with introducing the notation $\mathcal{W}\left(  \mu,\nu \right)$ for
the Wasserstein distance between the (Borel) probability measures $\mu$ and
$\nu$ with support in $\mathbb{R}^n$. Precisely, if $\Pi\left(  \mu,\nu \right)  $
denotes the space of probability measures with support in $\mathbb{R}^n\times\RR^n$ and
with marginals $\mu$ and $v$, respectively then
\begin{equation*}
\mathcal{W}\left(  \mu,v\right)  =\inf\left\lbrace \int\|\bx-\mathbf{y}\|_1\pi\left(  dx,dy\right)  :\pi\in\Pi\left(  \mu,\nu \right)
\right\rbrace, \label{Wd}%
\end{equation*}
where $\|\mathbf{z}\|_1 = \sum_{i=1}^n|z_i|$ is the $l_1$ norm for $\mathbf{z} = (z_1,\cdots, z_n)$. The distance between $\pi\in\calP(\Omega)$ and $\pi(\bmu)$ is given by
\begin{equation*}
    d\left(\pi, \Pi(\bmu)\right) = \inf \left\lbrace \calW(\pi, \pi'): \pi'\in\Pi(\bmu) \right\rbrace.
\end{equation*}
For $\varepsilon>0$, this becomes the final ingredient that leads to the consideration of the $\varepsilon$-relaxed distributional uncertainty set of martingale measures
\begin{equation}\label{def.uncertainty_set}
\mathcal{M}_{\boldsymbol{\mu}}\left(  \varepsilon
\right)  =\left\lbrace\pi\in\mathcal{M}\left(\Omega\right)
:d\left(\pi, \Pi(\boldsymbol{\mu})\right) \leq\varepsilon\right\rbrace.
\end{equation}
Compare to the standard MOT formulation in \eqref{def.standard_MOT}, our formulation in \eqref{def.uncertainty_set} does not require the marginals of $\pi$ to exactly match $\bmu$, but they are close to each other based on the distance function $d(\cdot, \cdot)$. To be specific, for any $\pi\in\calM_{\bmu}(\varepsilon)$, we have 
\begin{equation*}
    \sum_{i=1}^d \calW\left(\mu_i, \pi_i\right)\leq\varepsilon. 
\end{equation*}
Hence, the marginal measures of $\pi\in\calM_{\bmu}(\varepsilon)$ lie within a prescribed size of uncertainty $\varepsilon$ of the given marginals $\mu_1,\cdots, \mu_d$. For notational simplicity, without further specifying the marginals, we will use $\calM(\varepsilon)$ to denote $\calM_{\bmu}(\varepsilon)$ throughout the paper. The uncertainty set $\calM(\varepsilon)$ in \eqref{def.uncertainty_set} is carefully designed in the sense that it enjoys nice geometry structure. It is straight forward to check that  $\Pi(\boldsymbol{\mu})$ is closed (under the topology that is induced by Wassertstein distance) and convex. Thus, by standard functional analysis, there exists a probability measure $\pi'\in\Pi(\boldsymbol{\mu})$, such that $d(\pi, \Pi(\boldsymbol{\mu})) = \calW(\pi, \pi')$. 



Now, we provide the $\varepsilon$-DRMOT formulation. Given a non-negative function $f$, we consider

\begin{equation}\label{eq.worstcase}
I\left(\varepsilon\right)
=\sup_{\pi\in\calM(\varepsilon)}\mathbb{E}_{\pi}\left[f\left( \bX\right)\right].
\end{equation}
Note that $I\left(0\right)  $
recovers the standard MOT formulation. Just as in the conventional MOT, $I\left(
\varepsilon\right)$ can be interpreted as the cost of super-hedging a path
dependent financial security that pays $f\left(  \mathbf{X}\right)$, where $\mathbf{X}$ is the underlying (stochastic) price process.

The assumption that $\varepsilon=0$ in the standard MOT problem is motivated
by a setting in which call options are known for every strike price and for
every maturity time $1\leq i\leq d$. By differentiating the call option
prices with respect to the strike, we can recover the marginal distributions
$\mu_{i}$ exactly. In practice, however, call options are not traded at every
strike. So, although $\varepsilon>0$ will often be small, it typically will
not be exactly equal to zero. In fact, in our main computational results, we
will be able to send $\varepsilon\rightarrow0$, so this relaxation and our
results can also be used to estimate $I\left(0\right)$.

There are alternative relaxations that can be handled with the methods that we
study. For example, one may consider a benchmark measure $\mu\in
\mathcal{P}\left(\Omega\right)  $, parametrically
calibrated based on both private and public information and then consider a
relaxation of the form $\mathcal{W}\left(  \mu,\pi\right)  \leq\varepsilon$. Our formulation has the advantage that the misspecification size only depends on marginal information, so it may be easier to calibrate.

\subsection{The main goal and our technical contributions}\label{sec.main_contribution}

Our main goal in this paper is to provide a discretization scheme for
computing $I\left(\varepsilon\right)
$ by obtaining independent empirical versions, $\widehat{\mu}_{i}^{(n)} $ of
the $\mu_{i}$'s (each of them constructed with $n$ samples)\ and using these
to compute $I\left(\varepsilon
\right)  $ within an error of order $O_P\left(  n^{-1/2}\right)  $ (i.e. with
canonical sample complexity). In order to achieve this goal, we obtain various results which form the core of our technical contributions. The roadmap towards our goal proceeds as follows.

A) We show a strong duality result (Theorem \ref{thm.strongdual}), which is crucial to develop approximation and structural results for $I\left(
\varepsilon\right)$. For example, as a consequence of Theorem \ref{thm.strongdual}, we show
that $I\left(\cdot\right)  $ is
concave and therefore continuous in its domain of finiteness.

B) Suppose we can observe i.i.d samples $X_{i}^{(1)},
X_{i}^{(2)}, \cdots$ from $\mu_{i}$ for each $1\leq i\leq d$. Let  $\widehat{\mu}_{i}^{(n)}$ denote the
empirical distribution function,
\begin{equation*}
\label{defn.empirical_distribution}\widehat{\mu}_{i}^{(n)}(dx):=\frac{1}%
{n}\sum_{j=1}^{n}\mathbbm{1}(X_{i}^{(j)}\in dx).
\end{equation*}
Let $\boldsymbol{\widehat\mu}^n$ denote $\left(\widehat{\mu}_{1}^{(n)},\cdots, \widehat{\mu}_{d}^{(n)}\right)$. Using part A), we show (in Theorem \ref{thm.distance_n}) that
\[
I\left(\varepsilon\right)
=I_n\left(\varepsilon\right) + O_P\left(  n^{-1/2}\right)  ,
\]
with explicit constants in the error terms. Here $I_n(\varepsilon)$ is defined by
\begin{equation*}
    I_n(\varepsilon) := \sup_{\pi\in\calM_n(\varepsilon)} \EE_{\pi}\left[f(\bX)\right],
\end{equation*}
where $\calM_n(\varepsilon):= \left\lbrace \pi\in\calM(\Omega): d\left(\pi, \Pi(\widehat\bmu^n)\right)\leq\varepsilon\right\rbrace $.

C) Building on A) and B), we provide a discretization scheme for computing
$I_{n}\left(\varepsilon\right)  $. Note that
this is necessary because, even if the $\widehat{\mu}_{i}^{\left(  n\right) }
$'s are discrete, the set $\calM_{n}\left(
\varepsilon\right)  $ is still infinite-dimensional. The discretization 
involves partitioning each set $\Omega_{i}$ in {$O$}$\left(
n^{1/2}\right)  $ equally sized intervals in order to maintain an error of
order {$O$}$\left(  n^{-1/2}\right)  $. This leads to a finite-dimensional linear program (LP) with {$O$}$\left(  n^{3d/2}\right)
$ variables, see Theorem \ref{thm.discretize_approximation}.

D) Finally, we generalize our model to non-compact marginals. We use a technique based on Skorokhod's embedding to show that the non-compact problem can essentially be reduced to the compact problem that we have discussed in A), B) and C). Hence we can prove parallel results to B) and C); see Theorem \ref{thm.strongdual_general}, Theorem \ref{thm.distance_n_general} and Theorem \ref{thm.non_compact_final_step}.

\subsection{Related literature}
As mentioned earlier, our problem formulation is closely related to the
standard MOT problem, which was introduced in
\cite{model_indep_bound_Beiglbock2013} in order to obtain model-free bounds
for option pricing and super-hedging; see also, for connections to the
Skorokhod embedding problem \cite{robustPriceBound_Hobson2015}, in addition to
\cite{completeDualMOT_Touzi2017}, for general duality results and insights on
the structure of the optimal solution, and \cite{statsSuperhedge_obloj2018}
for a statistical approach to robust hedging also based on the MOT.
Traditionally, these results involve two period martingales 
\cite{multi_MOT_Lim2016, MOT_generalDimension_Lim2019,MOT_beiglbock2016}, there are also results on multi-period martingales \cite{NUTZ_multiMOT_2020}. These contributions are given typically in the context of discrete martingales, but there are also results for continuous-time martingales \cite{MOT_continuous_Dolinsky2014, stochasticControl_MOT_galichon2014, OPTandSkorokhodEmbedding_Beiglbock2017}. Most of the literature in MOT focuses on studying strong duality or regularity properties of the dual optimal solutions \cite{MOT_beiglbock2016, completeDualMOT_Touzi2017,dualattain}, leading to challenging technical issues since the dual theory of MOT is fundamentally different from the standard optimal transport.

Despite the rich literature in the theory of MOT, computational methods for MOT are under exploration. The numerical implementation of MOT is challenging because the martingale coupling can be easily destroyed given empirical marginals. For the two-period MOT, numerical approximation with parametric rate of convergence have been studied in \cite{guo_computational_2019}. However, the theoretical guarantee of numerical methods for multi-period MOT has not been well studied. As far as we understand, in the multi-period setting, only consistency as sample size increases has been proved in \cite{guo_computational_2019, backhoffveraguas_stability_2019}. The work of  \cite{Obloj_robustPricingMultiple_2019} discusses a neural network approach to optimize the dual problem, although no rates of convergence are given. Most relaxations of the MOT problem involve relaxing the martingale constraints (e.g., \cite{guo_computational_2019}), we take a different modeling approach, relaxing instead the marginal constraints.

The main technical difficulty lies in the insufficient understanding of the dual variables in multi-period case. By instead relaxing the marginal constraint in the uncertainty set, implying that we do not need to match the empirical marginals exactly we are able to overcome some of the technical difficulties in the past literature and yet preserving the subtle structure imposed by the martingale constraint.

Our model formulation connects to the literature on distributionally robust optimization (DRO). Motivated by the seminal paper \cite{dupuis2000robust}, Robust optimization and robust control under model uncertainty has been extensively studied \cite{hansen2001robust, hansen2008robustness, DROwithMomentUncertainty_Ye2010} in the past two decades. The idea of modeling distributional robustness has also been applied to different areas, such as finance
\cite{pflug2014multistage} and statistics \cite{huber2004robust}.
In short, DRO is a class of games in which the optimizer chooses a decision against an adversary which chooses a probability model within a feasible set, known as the distributional uncertainty set. In recent years, non-parametric uncertainty sets based on optimal transport have been considered \cite{dataDrivenDRO_tractableFormulation_Kuhn2018, DROwithWasserstein_GAO_2016,DR_model_risk_Jose_2016, RWP_Jose2016,SGM_DRO_f-divergence_Duchi2016,
stats_RO_EL_Duchi2016}. It has also been shown that DRO estimators can recover a wide range of classical regularization estimators in statistics \cite{SOS_inference_Jose2016, DROgroupLasso_Jose2017} and machine learning \cite{duchi2016variance, LearningPerformance_DRO_Duchi2018, sinha2018certifying}. We believe that our results will find applications in DRO area as well. In particular, as mentioned earlier in the Introduction, in the setting of multi-stage distributionally robust stochastic programming.

\subsection{Organization}

The rest of the paper is organized as follows. In Section \ref{sect.DRMOT_compact}, we provide explicit statements of the technical results described in parts A)-C), where the marginals are assumed to have compact supports. In Section \ref{sect.DRMOT_non_compact}, we discuss how to generalize our formulation to allow non-compact marginals following the results in D).
The results are presented in Section \ref{sect.DRMOT_compact} and Section \ref{sect.DRMOT_non_compact} without their proofs; our goal is to provide a precise description of the roadmap outlined in contributions
A) to D). 

The complete proofs of the results corresponding to contributions A) to D) are
given in Section \ref{sect.proof_strong_dual} to Section \ref{sect.proofs_non_compact}, respectively. Each of these sections, in
turn, contain various technical results whose proofs are related to various appendices inside these sections.

\section{Distributionally Robust Martingale Optimal Transport: Compact Domains}\label{sect.DRMOT_compact}

Throughout this paper, we impose the following two assumptions.

\begin{assumption}
\label{aspn.convexorder} The $d$ marginal measures $\mu_{1}, \cdots, \mu_{d}$ satisfy the convex ordering denoted by $$\mu_1\preceq_c\cdots\preceq_c\mu_d.$$ Here, any two integrable probability measures $\mu$ and
$\nu$ are in convex ordering \emph{(}$\mu\preceq_{c}\nu$\emph{)} if and only if
\begin{equation*}
\int\phi(x)\mu(dx) \leq\int\phi(x)\nu(dx)
\end{equation*}
holds for arbitrary convex function $\phi$ with linear growth at infinity.
\end{assumption}
In view of Assumption \ref{aspn.convexorder}, by the standard results in \cite{strassen1965}, there exists a martingale measure in $\Pi(\bmu)$.
\begin{assumption}
\label{aspn.cost_function} The payoff function $f(\cdot)$ is
continuous on $\mathbb{R}^{d}$.
\end{assumption}
In this section, we further assume compact supports for each marginals. In section \ref{sect.DRMOT_non_compact}, we will discuss how to generalize our framework to the non-compact case. 

\begin{assumption}
\label{aspn.compact_support} $\Omega_{i}$ \emph{(}the support of $\mu_i$\emph{)} is a compact interval of $\mathbb{R}$
for each $1\leq i\leq d$, and $\Omega_{1}\subseteq\Omega_{2}\subseteq
\cdots\subseteq\Omega_{d}$.
\end{assumption}

\subsection{Strong Duality and Continuity in the Relaxation Parameter}
Before stating Theorem \ref{thm.strongdual}, we introduce the following notation system for simplicity. For a generic $d$-tuple $v=(v_1,\cdots, v_d)\in\RR^d$, let $v_{i:j}:=(v_i,\cdots, v_j)$ for $1\leq i\leq j\leq d$. For a generic product space $S_1\times\cdots\times S_m$, let $S_{i:j}:= S_i\times\cdots\times S_j$ for $1\leq i\leq j\leq d$. For a generic sequence of probability measures $(\varphi_1,\cdots, \varphi_d)$, let $\varphi_{i:j}:= (\varphi_i,\cdots,\varphi_j)$ for $1\leq i\leq j\leq d$. For any space $S\subset \RR^n$, let $C(S)$ be the set of continuous functions that supporting on $S$. 
\begin{theorem}
\label{thm.strongdual} Let Assumption \ref{aspn.compact_support} hold. Suppose that $\mathcal{M}_{\boldsymbol{\nu}}(\varepsilon
)\ne\varnothing$ for some  $\boldsymbol{\nu}\in\bigtimes_{i=1}^d\calP(\Omega_i)$ and $\varepsilon>0$, then the dual representation
of
\begin{equation*}
I_{\boldsymbol{\nu}}(\varepsilon) = \sup_{\pi\in\mathcal{M}%
_{\boldsymbol{\nu}}(\varepsilon)}\mathbb{E}_{
\pi}\left[  f(\mathbf{X})\right]
\end{equation*}
is:
\begin{align*}
J_{\boldsymbol{\nu}}(\varepsilon) := \emph{inf} \quad &
\gamma\varepsilon+ \sum_{k=1}^{d}\int_{\Omega_{k}}\beta_{k}(x_{k})\nu
_{k}(dx_{k})\\
\emph{subject to} \quad &  
\gamma\in\RR_{\geq0},\quad \boldsymbol{\alpha}\in\bigtimes_{i=1}^{d-1}C(\Omega_{1:i}),\quad \boldsymbol{\beta}\in\bigtimes_{i=1}^d C(\Omega_i);\\
&  H(\gamma,\boldsymbol{\alpha},\boldsymbol{\beta})(\mathbf{x},\mathbf{x}') \leq 0, \quad \emph{for all } \mathbf{x}, \mathbf{x'} \in \Omega.
\end{align*}
Here $\boldsymbol{\alpha}= (\alpha_1,\cdots,\alpha_{d-1})$, where $\alpha_{i}(\cdot)\in C\left(\Omega
_{1:i}\right)$. $\boldsymbol{\beta} = (\beta_1,\cdots,\beta_d)$, where $\beta_i(\cdot)\in C(\Omega_i)$. The function $H$ is defined by
\begin{equation*}
    H(\gamma,\boldsymbol{\alpha},\boldsymbol{\beta})(\mathbf{x},\mathbf{x}')= f(\mathbf{x}') + \sum_{k=1}^{d-1}\alpha_{k}
(\bx_{1:k}^{\prime})(x_{k+1}^{\prime}- x_{k}^{\prime
})-\gamma\|\bx - \bx'\|_1-\sum_{k=1}^{d}\beta_{k}(x_{k}),
\end{equation*}
where $\mathbf{x} = (x_1,\cdots, x_d), \mathbf{x}'=(x_1',\cdots, x_d')$.
Moreover, we have the following strong duality
\[
I_{\boldsymbol{\nu}}(\varepsilon) = J_{\boldsymbol{\nu}}(\varepsilon).
\]
and there exist a primal optimizer  $\pi^{\emph{DRO}}\in\mathcal{M}_{\boldsymbol{\nu}}(\varepsilon)$ for $I_{\boldsymbol{\nu}
}(\varepsilon)$.
\end{theorem}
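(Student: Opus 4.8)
The plan is to establish strong duality by realizing $I_{\boldsymbol{\nu}}(\varepsilon)$ as the optimal value of an infinite-dimensional linear program over the space of finite signed measures on $\Omega \times \Omega$ and then applying a minimax / Fenchel-type duality theorem. First I would lift the problem: a measure $\pi \in \mathcal{M}_{\boldsymbol{\nu}}(\varepsilon)$ together with a coupling that realizes $d(\pi, \Pi(\boldsymbol{\nu}))$ can be encoded as a single probability measure $Q$ on $\Omega \times \Omega$, where the first copy of $\Omega$ carries the martingale coordinate process $\mathbf{X} = (X_1,\dots,X_d)$ and the second copy $\mathbf{X}'=(X_1',\dots,X_d')$ carries a process whose $k$-th marginal is $\nu_k$. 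The constraints become linear in $Q$: (i) the martingale constraints $\mathbb{E}_Q[(X_{k+1}-X_k)\,g(X_{1:k})]=0$ for all $g\in C(\Omega_{1:k})$, $1\le k\le d-1$; (ii) the marginal-matching constraints $\mathbb{E}_Q[h(X_k')] = \int h\, d\nu_k$ for all $h\in C(\Omega_k)$, $1\le k\le d$; (iii) the single budget constraint $\mathbb{E}_Q\|\mathbf{X}-\mathbf{X}'\|_1 \le \varepsilon$; and (iv) $Q$ a probability measure. The objective is the linear functional $\mathbb{E}_Q[f(\mathbf{X}')]$ — note it is placed on the primed (near-$\boldsymbol{\nu}$) copy, which is what makes the $\alpha_k$ in $H$ act on $\bx'_{1:k}$; one should check carefully that this formulation is equivalent to \eqref{eq.worstcase}, using that the infimum defining $d(\pi,\Pi(\boldsymbol{\nu}))$ is attained (as noted in the excerpt) and that the martingale coordinate is the one we take $f$ of after passing through the coupling — this bookkeeping needs to be done precisely.

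Next I would set up the dual. Assign a dual variable $\alpha_k \in C(\Omega_{1:k})$ to each martingale constraint, $\beta_k \in C(\Omega_k)$ to each marginal constraint, $\gamma \ge 0$ to the budget constraint, and a scalar to the total-mass constraint. Writing the Lagrangian and minimizing over probability measures $Q$ (equivalently, over points $(\mathbf{x},\mathbf{x}')\in\Omega\times\Omega$, since the extreme points of $\mathcal{P}(\Omega\times\Omega)$ are Dirac masses) produces exactly the constraint $H(\gamma,\boldsymbol{\alpha},\boldsymbol{\beta})(\mathbf{x},\mathbf{x}')\le 0$ for all $(\mathbf{x},\mathbf{x}')$, once the scalar mass-multiplier is optimized out, yielding $J_{\boldsymbol{\nu}}(\varepsilon)$. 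Weak duality $I_{\boldsymbol{\nu}}(\varepsilon)\le J_{\boldsymbol{\nu}}(\varepsilon)$ is then immediate by integrating $H\le 0$ against any feasible $Q$ and using the constraints.

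For strong duality and primal attainment, the natural route is a Sion minimax theorem or the standard duality for linear programming over a compact convex set of measures: $\mathcal{P}(\Omega\times\Omega)$ is weak-$*$ compact (since $\Omega$ is compact by Assumption \ref{aspn.compact_support}), the feasible set $\{Q : \text{(i)--(iv)}\}$ is weak-$*$ closed and convex, $f$ is continuous (Assumption \ref{aspn.cost_function}), hence the supremum $I_{\boldsymbol{\nu}}(\varepsilon)$ is attained — this gives $\pi^{\mathrm{DRO}}$ after projecting the optimal $Q$ onto the unprimed $\Omega$. For the absence of a duality gap I would invoke a Slater-type / nonemptiness condition: the hypothesis $\mathcal{M}_{\boldsymbol{\nu}}(\varepsilon)\ne\varnothing$ with $\varepsilon>0$ ensures the primal is feasible and, crucially, that the budget constraint can be satisfied with slack (one can always perturb to strict feasibility in the $\gamma$-constraint when $\varepsilon>0$), which is what is needed for the value function of the perturbed LP to be continuous at $0$ and hence for the dual value to equal the primal value. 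Concretely I would apply a Rockafellar-style infinite-dimensional duality theorem (e.g.\ via the perturbation function $v(\delta) = \sup\{\mathbb{E}_Q[f(\mathbf{X}')] : \text{budget} \le \varepsilon+\delta, \text{marginals} = \nu + (\text{perturbation})\}$ and checking its concavity and the finiteness/continuity at the origin).

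The main obstacle I anticipate is the duality-gap argument: infinite-dimensional LP duality over measures is delicate because the dual variables $\alpha_k$ live in $C(\Omega_{1:k})$ rather than in a nicely dual pair with the constraint space, and one must rule out a gap coming from the martingale constraints (which are notoriously the source of pathologies in MOT duality — cf.\ the discussion of \cite{completeDualMOT_Touzi2017, dualattain} in the excerpt). The relaxation $\varepsilon>0$ is exactly what buys us room here: I would argue that the extra Wasserstein slack lets us approximate any feasible $\pi$ by one whose marginals are in the \emph{interior} of the relevant cone of convex-ordered measures, restoring an interior point and thereby closing the gap; making this quantitative (and compatible with the martingale structure) is the technical heart of the proof.
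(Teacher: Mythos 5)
Your lift to $\Omega\times\Omega$, the choice of dual variables $(\gamma,\boldsymbol{\alpha},\boldsymbol{\beta})$, the identification of the dual constraint with $H\le 0$ via Dirac masses, and the compactness argument for primal attainment all match the paper's proof in structure. However, the step you single out as the ``technical heart'' --- closing the duality gap via a Slater/perturbation argument, by pushing the marginals into the \emph{interior} of the cone of convex-ordered measures --- is not what is needed, and is likely not salvageable as stated: that cone has empty interior in the weak topology, and no constraint-qualification of this kind is invoked in the paper. The actual mechanism is the one you mention only in passing: Sion's minimax theorem applied to the Lagrangian $\calL(\bpi,\gamma,\boldsymbol{\alpha},\boldsymbol{\beta})$ with $\bpi$ ranging over the \emph{full} set $\calP(\Omega\times\Omega)$ (not the feasible set). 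Since $\Omega$ is compact, this set is weak-$*$ compact and convex, $\calL$ is continuous and affine in each argument, and Sion gives $\sup\inf=\inf\sup$ with no interiority hypothesis whatsoever. The constraints are then recovered a posteriori: for any $\bpi$ violating a martingale, marginal, or budget constraint, the inner infimum over the dual variables is $-\infty$, so such $\bpi$ drop out of the sup-inf side, which therefore equals $I_{\boldsymbol{\nu}}(\varepsilon)$; the hypothesis $\calM_{\boldsymbol{\nu}}(\varepsilon)\ne\varnothing$ is used only to ensure this value is not $-\infty$. Your proposal leaves the gap-closing step unresolved and points it in a direction that would not work, so as written the proof is incomplete at its central step.

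A secondary but genuine issue is your bookkeeping of the two copies. In the theorem's $H$, the martingale increments $\alpha_k(\bx'_{1:k})(x'_{k+1}-x'_k)$ and the payoff $f(\bx')$ both live on the \emph{primed} copy, while the $\beta_k(x_k)$ enforce $\nu_k$ on the \emph{unprimed} copy; i.e., the primed process is the martingale whose marginals lie within $\varepsilon$ of $\boldsymbol{\nu}$, and the unprimed process has marginals exactly $\boldsymbol{\nu}$. Your constraints (i) and (ii) assign these roles the other way around, which contradicts your own placement of the objective on $\mathbf{X}'$; this must be fixed before the Lagrangian computation can produce the stated $H$.
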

We shall emphasize the connection of the above dual formulation to robust super-hedging. By setting $\bx' = \bx$, the constraints $H(\gamma, \balpha, \bbeta)(\bx, \bx)\leq 0$ recover the exact super-hedging description in \cite{model_indep_bound_Beiglbock2013}:
\begin{equation}\label{ieq.super_hedging}
    \sum_{k=1}^d \beta_k(x_k) - \sum_{k=1}^{d-1}\alpha_{k}
(\bx_{1:k})(x_{k+1}- x_{k}) \geq f(\bx), \textrm{ for all } \bx\in\Omega
\end{equation}
where we can construct a portfolio consisting of vanilla options $\beta_k(x_k)$ and the risky asset traded (via the self-financing strategy $\{\alpha_k(\bx_{1:k})\}_{k=1}^{d-1}$) over time, (the left-hand side of \eqref{ieq.super_hedging}) to super-replicate the payoff $f(\bx)$. In contrast to the standard MOT, we allow $\bx'\neq \bx$ in the dual constraints, which leads to new super-hedging strategy that is robust to model misspecification in the marginals. To be precise, we can interpret the dual constraints in Theorem \ref{thm.strongdual} by
\begin{equation*}
\sum_{k=1}^{d}\left[\beta_{k}(x_{k}) + \gamma |x_k' - x_k|\right] - \sum_{k=1}^{d-1}\alpha_{k}
(\bx_{1:k}^{\prime})(x_{k+1}^{\prime}- x_{k}^{\prime
})\geq  f(\mathbf{x}'), \textrm{ for all } \bx, \bx'\in\Omega. 
\end{equation*}
Hence, 
\begin{equation*}
    \sum_{k=1}^d \widetilde\beta(x_k') - \sum_{k=1}^{d-1}\alpha_k(\bx'_{1:k})(x_{k+1}' - x_k') \geq f(\bx'), \textrm{ for all } \bx'\in\Omega,
\end{equation*}
where $\widetilde\beta(x_k') := \inf_{x_k\in\Omega_k}\left\lbrace \beta_k(x_k) + \gamma|x_k' - x_k|\right\rbrace$, for $1\leq k\leq d$. Then, even the marginals are misspecified within the distributional uncertainty set in Theorem \ref{thm.strongdual}, we can still super-replicate the payoff $f(\bx')$ by a portfolio consisting of vanilla options $\widetilde\beta(x_k')$ and investments in the risky asset according to the self-financing strategy $\{\alpha_k(\bx_{1:k}^{\prime})\}_{k=1}^{d-1}$.

Below is a direct Corollary of Theorem \ref{thm.strongdual}, which will be useful in the proof of one of our main results, namely, Theorem \ref{thm.distance_n}.

\begin{corollary}
\label{cor.dual_reformulation} Let Assumption \ref{aspn.compact_support} hold. Suppose that $\mathcal{M}_{\boldsymbol{\nu}}(\varepsilon
)\ne\varnothing$ for some $\boldsymbol{\nu}\in\bigtimes_{i=1}^d \calP(\Omega_i)$ and  $\varepsilon>0$, we have 
\begin{equation}
I_{\boldsymbol{\nu}}(\varepsilon)=\inf_{\left(  \gamma, \boldsymbol{\alpha},   \boldsymbol{\beta} \right)  \in\mathcal{S}_{\boldsymbol{\nu}_{1:d-1}}}%
\gamma\varepsilon+\int_{\Omega_{d}}F\left(  x_d;\gamma,\boldsymbol{\alpha} ,\boldsymbol{\beta} \right)
\nu_{d}(dx_{d}), \label{eq.dual_reformulation}
\end{equation}
where 
\begin{equation*}
\begin{aligned} F\left(x_d; \gamma, \boldsymbol{\alpha} ,\boldsymbol{\beta}\right) : = \sup_{ \bx_{1:d-1}\in\Omega_{1:d-1},   \mathbf{x}'\in\Omega}\left\lbrace f\left(\mathbf{x}'\right)- \sum_{k=1}^{d-1}\beta_k(x_k)-\gamma\sum_{k=1}^{d}|x_k-x_k'| + \right. \\ \left. \sum_{k=1}^{d-1}\alpha_k(\mathbf{x}_{1:k}')(x_{k+1}'-x_k')\right\rbrace \end{aligned}
\end{equation*}
and
\begin{align*}
\mathcal{S}_{\boldsymbol{\nu}_{1:d-1}} &  :=\left\{  \left(
\gamma,\boldsymbol{\alpha} ,\boldsymbol{\beta}\right)  :\gamma\geq0,\alpha_{k}\in C(\Omega_{1:k}),\beta_{k}\in C(\Omega_{k}), \int_{\Omega_{k}}\beta_{k}(x)\nu_{k}(dx)=0,\forall 1\leq k\leq
d-1\right\}.
\end{align*}
\end{corollary}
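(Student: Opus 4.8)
The plan is to derive Corollary~\ref{cor.dual_reformulation} directly from Theorem~\ref{thm.strongdual} by an elementary repackaging of the dual program, carried out in three reduction steps.

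\textbf{Step 1: Absorb the $d$-th penalty into an inner supremum.} Starting from $J_{\boldsymbol{\nu}}(\varepsilon)$, I first hold $(\gamma,\boldsymbol{\alpha},\beta_1,\dots,\beta_{d-1})$ fixed and observe that $\beta_d$ enters the objective only through the linear term $\int_{\Omega_d}\beta_d(x_d)\nu_d(dx_d)$ and enters the constraint $H(\gamma,\boldsymbol{\alpha},\boldsymbol{\beta})(\bx,\bx')\le 0$ only through the subtracted term $\beta_d(x_d)$. Rewriting $H(\gamma,\boldsymbol{\alpha},\boldsymbol{\beta})(\bx,\bx')\le 0$ for all $\bx,\bx'$ as
\begin{equation*}
\beta_d(x_d)\ \ge\ \sup_{x_{1:d-1},\,\bx'}\Bigl\{ f(\bx') + \sum_{k=1}^{d-1}\alpha_k(\bx'_{1:k})(x'_{k+1}-x'_k) - \gamma\|\bx-\bx'\|_1 - \sum_{k=1}^{d-1}\beta_k(x_k)\Bigr\}
\end{equation*}
shows that, for fixed $x_d$, the constraint is a pointwise lower bound on $\beta_d(x_d)$ by exactly $F(x_d;\gamma,\boldsymbol{\alpha},\boldsymbol{\beta})$ (note the right-hand side above is precisely $F$, since the $x_d$-dependence in $\|\bx-\bx'\|_1$ is kept). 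Since $\nu_d\ge 0$ and we are minimizing $\int\beta_d\,d\nu_d$, the optimal choice is $\beta_d(x_d)=F(x_d;\gamma,\boldsymbol{\alpha},\boldsymbol{\beta})$; this is continuous on the compact $\Omega_d$ by Assumption~\ref{aspn.compact_support}, Assumption~\ref{aspn.cost_function} and a standard maximum-theorem / uniform-continuity argument, so it is admissible. Hence the infimum over $\beta_d\in C(\Omega_d)$ is attained and equals $\int_{\Omega_d}F(x_d;\gamma,\boldsymbol{\alpha},\boldsymbol{\beta})\,\nu_d(dx_d)$.

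\textbf{Step 2: Normalize $\beta_1,\dots,\beta_{d-1}$.} After Step~1 the program reads $\inf\{\gamma\varepsilon+\sum_{k=1}^{d-1}\int\beta_k\,d\nu_k+\int_{\Omega_d}F\,d\nu_d\}$ over $\gamma\ge0$, $\boldsymbol{\alpha}$, and $\beta_{1:d-1}$. I claim one may impose $\int_{\Omega_k}\beta_k\,d\nu_k=0$ for $1\le k\le d-1$ without changing the value. Given any feasible $(\gamma,\boldsymbol{\alpha},\beta_{1:d-1})$, set $c_k:=\int_{\Omega_k}\beta_k\,d\nu_k$ and replace $\beta_k$ by $\beta_k-c_k$. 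This is the key observation: $F$ is translation-covariant in the sense that shifting $\beta_k\mapsto\beta_k-c_k$ increases $F$ by exactly $\sum_{k=1}^{d-1}c_k$ (since $-\sum\beta_k(x_k)$ appears inside the sup with $x_{1:d-1}$ free, and... careful: in $F$ only $\beta_1,\dots,\beta_{d-1}$ appear, matching the $d-1$ normalized ones), while the linear terms $\sum_{k=1}^{d-1}\int\beta_k\,d\nu_k$ decrease by $\sum c_k$. The two changes cancel, so the objective is unchanged; the new $\beta_k$ are still continuous. Therefore the infimum over the normalized set $\mathcal{S}_{\boldsymbol{\nu}_{1:d-1}}$ equals the infimum over all of $C(\Omega_{1:k})\times C(\Omega_k)$, i.e.\ equals $J_{\boldsymbol{\nu}}(\varepsilon)=I_{\boldsymbol{\nu}}(\varepsilon)$ by Theorem~\ref{thm.strongdual}. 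This yields \eqref{eq.dual_reformulation}.

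\textbf{Main obstacle.} The routine parts are the two bookkeeping reductions; the one genuinely technical point is verifying that $F(\cdot;\gamma,\boldsymbol{\alpha},\boldsymbol{\beta})$ is a well-defined, finite, continuous function on $\Omega_d$ so that the substituted $\beta_d$ is an admissible dual variable — the whole derivation collapses if $F\equiv+\infty$ or is merely measurable. Finiteness follows because $f$ is bounded on the compact $\Omega$ (Assumption~\ref{aspn.cost_function}, \ref{aspn.compact_support}) and $\alpha_k$ is bounded on the compact $\Omega_{1:k}$, while $-\gamma|x_k-x'_k|\le 0$; continuity in $x_d$ follows since $x_d$ enters the inner objective only through the single Lipschitz term $-\gamma|x_d-x'_d|$ (uniformly in the other variables, using joint uniform continuity of $f$ on the compact product), so $x_d\mapsto F(x_d;\cdots)$ is itself $\gamma$-Lipschitz, hence continuous. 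I would state this as a short lemma before assembling Steps~1--2. One should also double-check the index alignment: the primal dual has $\boldsymbol{\beta}\in\bigtimes_{i=1}^d C(\Omega_i)$ but $F$ and $\mathcal{S}_{\boldsymbol{\nu}_{1:d-1}}$ only involve $\beta_1,\dots,\beta_{d-1}$ — this is consistent precisely because $\beta_d$ was eliminated in Step~1, and I would remark on this explicitly to avoid confusion.
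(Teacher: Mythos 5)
Your proof is correct and follows essentially the same route as the paper: eliminate $\beta_d$ to obtain the intermediate program $\inf\{\gamma\varepsilon+\sum_{k=1}^{d-1}\int\beta_k\,d\nu_k+\int_{\Omega_d} F\,d\nu_d\}$, then normalize $\beta_1,\dots,\beta_{d-1}$ by constant shifts, which leave the objective invariant because the shift of $F$ cancels the shift of the linear terms. The only difference is that the paper extracts the intermediate formula directly from inside the proof of Theorem \ref{thm.strongdual} (equation \eqref{strongdual_I>=J}), whereas you rederive it from the theorem's statement by partial minimization over $\beta_d$, which is why you need --- and correctly supply --- the finiteness and $\gamma$-Lipschitz continuity of $F(\cdot;\gamma,\boldsymbol{\alpha},\boldsymbol{\beta})$ on $\Omega_d$.
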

The above Corollary is a direct consequence of Theorem \ref{thm.strongdual}, we sketch its proof as follows. 
\begin{proof}[Proof of Corollary \ref{cor.dual_reformulation}]
	Follow the proof of Theorem \ref{thm.strongdual}, we obtain the following result (see \eqref{strongdual_I>=J}):
	\begin{equation}\label{cor1.keyexpression}
	I_{\boldsymbol{\nu}}(\varepsilon) = \inf_{\mbox{$\begin{subarray}{c}\gamma \geq 0, \alpha_k\in C(\Omega_{1:k}), \\
			\beta_k\in C(\Omega_k), 1\leq k\leq d-1\end{subarray}$}} \gamma\varepsilon + \sum_{k=1}^{d-1}\int_{\Omega_k}\beta_{k}(x)\nu_k(dx) + \int_{\Omega_d}
	F\left(x_d; \gamma, \boldsymbol{\alpha}, \boldsymbol{\beta}\right)\nu_d(dx_d).
	\end{equation}
	By the definition of $F$, we observe that the objective function $I_{\boldsymbol{\nu}}(\varepsilon)$ does not change if we shift any $\beta_k(\cdot)$ to $\beta_k(\cdot) + \lambda_k$ by an arbitrary constant $\lambda_k$. Hence we can properly select constants $\lambda_k$ $(1\leq k\leq d-1)$ to enforce the constraints
	$
	\int_{\Omega_k}\beta_k(x)\nu_k(dx) = 0, \textrm{ for all }1\leq k\leq d-1.
	$
	Plugging the above constraints into \eqref{cor1.keyexpression}, we conclude the desired result.
\end{proof}

\begin{remark}\label{remark.symmetry}
By symmetry, we can replace $\nu_{d}$ in \eqref{eq.dual_reformulation} by any
other $\nu_{k}$ $(1\leq k\leq d-2)$. To be precise,
\begin{equation*}
I_{\boldsymbol{\nu}}(\varepsilon)=\inf_{\left(  \gamma, \boldsymbol{\alpha},   \boldsymbol{\beta} \right)  \in\mathcal{S}_{\boldsymbol{\nu}_{-k}}}%
\gamma\varepsilon+\int_{\Omega_{d}}F\left(x_k;\gamma,\boldsymbol{\alpha} ,\boldsymbol{\beta} \right)
\nu_{k}(dx_{k}),
\end{equation*}
where 
\begin{equation*}
\begin{aligned} F\left(x_k; \gamma, \boldsymbol{\alpha} ,\boldsymbol{\beta}\right) : = \sup_{ \bx_{-k}\in\Omega_{-k},   \mathbf{x}'\in\Omega}\left\lbrace f\left(\mathbf{x}'\right)- \sum_{1\leq j\leq d, j\neq k}\beta_j(x_j)-\gamma\sum_{j=1}^{d}|x_j-x_j'| + \right. \\ \left. \sum_{j=1}^{d-1}\alpha_j(\mathbf{x}_{1:j}')(x_{j+1}'-x_j')\right\rbrace, \end{aligned}
\end{equation*}
$\bx_{-k} := (x_1,\cdots, x_{k-1}, x_{k+1},\cdots, x_d)$, $\Omega_{-k}:= \bigtimes_{j\neq k}\Omega_j$ and
\begin{align*}
\mathcal{S}_{\boldsymbol{\nu}_{-k}} &  :=\bigg\{  \left(
\gamma,\boldsymbol{\alpha} ,\boldsymbol{\beta}\right)  :\gamma\geq0; \alpha_{j}\in C(\Omega_{1:j}) \emph{ for all } 1\leq j\leq d-1;\beta_{j}\in C(\Omega_{j}),\\
&\qquad\qquad\qquad\qquad\qquad\qquad\qquad\qquad\qquad\int_{\Omega_{j}}\beta_{j}(x)\nu_{j}(dx)=0,\emph{ for all } j\neq k \bigg\}.
\end{align*}
\end{remark}

Next we study the continuity in the relaxation parameter $\varepsilon$ in
$\varepsilon$-DRMOT. This result is relevant because it shows that we can use our relaxation as an asymptotically correct (as $\varepsilon \rightarrow 0$) approximation to the standard MOT problem.

\begin{prop}
\label{prop.cotinuity_epsilon} Given $\boldsymbol{\nu} \in \bigtimes_{i=1}^d\calP(\Omega_i)$, let $R_{\boldsymbol{\nu}} :=
\inf\{\varepsilon\geq 0 : \mathcal{M}_{\boldsymbol{\nu}}(\varepsilon)\neq\varnothing\}$. Then,  $I_{\boldsymbol{\nu}}(\varepsilon)$ is both concave and continuous as a function of $\varepsilon\in(R_{\boldsymbol{\nu}}, +\infty)$.
Moreover, if $\nu_{1}\preceq_{c}\cdots\preceq_{c}\nu_{d}$, $I_{\boldsymbol{\nu}}(\varepsilon)$ is a continuous function of
$\varepsilon\in[0, +\infty)$.
\end{prop}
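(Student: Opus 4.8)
The plan is to route everything through the strong duality of Theorem \ref{thm.strongdual}, which exhibits $I_{\boldsymbol\nu}(\varepsilon)$ as an infimum — over a feasible set that does \emph{not} depend on $\varepsilon$ — of functions that are \emph{affine} in $\varepsilon$. First I would observe that $\varepsilon\mapsto\mathcal M_{\boldsymbol\nu}(\varepsilon)$ is nondecreasing for set inclusion (the constraint $d(\pi,\Pi(\boldsymbol\nu))\le\varepsilon$ only loosens as $\varepsilon$ grows), so $\mathcal M_{\boldsymbol\nu}(\varepsilon)\ne\varnothing$ for every $\varepsilon>R_{\boldsymbol\nu}$. For such $\varepsilon$, Theorem \ref{thm.strongdual} gives $I_{\boldsymbol\nu}(\varepsilon)=J_{\boldsymbol\nu}(\varepsilon)=\inf\{\gamma\varepsilon+\sum_{k=1}^d\int_{\Omega_k}\beta_k(x_k)\,\nu_k(dx_k)\}$, the infimum being taken over all $(\gamma,\boldsymbol\alpha,\boldsymbol\beta)$ feasible for $J_{\boldsymbol\nu}$, and this feasible set is independent of $\varepsilon$. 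Since for each fixed feasible $(\gamma,\boldsymbol\alpha,\boldsymbol\beta)$ the map $\varepsilon\mapsto\gamma\varepsilon+\sum_k\int\beta_k\,d\nu_k$ is affine (with slope $\gamma\ge0$), the pointwise infimum $I_{\boldsymbol\nu}$ is concave on $(R_{\boldsymbol\nu},\infty)$.

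For finiteness I would use that $\Omega=\bigtimes_i\Omega_i$ is compact (Assumption \ref{aspn.compact_support}) and $f$ is continuous (Assumption \ref{aspn.cost_function}), so $f$ is bounded on $\Omega$ and $\min_\Omega f\le I_{\boldsymbol\nu}(\varepsilon)\le\max_\Omega f$ whenever $\mathcal M_{\boldsymbol\nu}(\varepsilon)\ne\varnothing$; in particular $I_{\boldsymbol\nu}$ is real-valued on $(R_{\boldsymbol\nu},\infty)$. A finite concave function on an open interval of $\RR$ is automatically continuous there, which gives the first assertion of the Proposition.

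For the second assertion, assume $\nu_1\preceq_c\cdots\preceq_c\nu_d$. Then Strassen's theorem \cite{strassen1965} produces a martingale measure whose marginals are exactly $\nu_1,\dots,\nu_d$; such a measure lies in $\mathcal M(\Omega)\cap\Pi(\boldsymbol\nu)$, whose distance to $\Pi(\boldsymbol\nu)$ is $0$, so $\mathcal M_{\boldsymbol\nu}(0)\ne\varnothing$ and hence $R_{\boldsymbol\nu}=0$. Now Theorem \ref{thm.strongdual} applies at every $\varepsilon\in[0,\infty)$, so the inf-of-affine representation above is valid on all of $[0,\infty)$; this makes $I_{\boldsymbol\nu}$ concave and (as above) finite on $[0,\infty)$, hence continuous on $(0,\infty)$, and only continuity at $\varepsilon=0$ remains. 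I would get $\limsup_{\varepsilon\downarrow0}I_{\boldsymbol\nu}(\varepsilon)\le I_{\boldsymbol\nu}(0)$ for free, since a pointwise infimum of continuous functions is upper semicontinuous. For the reverse, fix $\varepsilon_1>0$; for $0<\varepsilon<\varepsilon_1$ write $\varepsilon=(1-\lambda)\varepsilon_1$ with $\lambda=1-\varepsilon/\varepsilon_1\in(0,1)$, and concavity gives $I_{\boldsymbol\nu}(\varepsilon)\ge\lambda I_{\boldsymbol\nu}(0)+(1-\lambda)I_{\boldsymbol\nu}(\varepsilon_1)$; letting $\varepsilon\downarrow0$ and using $|I_{\boldsymbol\nu}(\varepsilon_1)|<\infty$ yields $\liminf_{\varepsilon\downarrow0}I_{\boldsymbol\nu}(\varepsilon)\ge I_{\boldsymbol\nu}(0)$. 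Combining the two bounds gives continuity at $0$.

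The only genuinely delicate point is continuity at the left endpoint $\varepsilon=0$: concavity by itself yields only lower semicontinuity there, and the matching upper bound requires the inf-of-affine representation, i.e. strong duality at $\varepsilon=0$, which is legitimate precisely because Strassen's theorem guarantees $\mathcal M_{\boldsymbol\nu}(0)\ne\varnothing$. If one prefers to avoid invoking duality at the boundary, the same upper semicontinuity can be obtained on the primal side: take $\varepsilon_n\downarrow0$ and near-optimal $\pi_n\in\mathcal M_{\boldsymbol\nu}(\varepsilon_n)$, extract a weakly convergent subsequence $\pi_n\to\pi_\ast$ on the weakly compact set $\mathcal P(\Omega)$, note that the martingale constraints — rewritten as $\EE_\pi[(X_{k+1}-X_k)\,g(X_{1:k})]=0$ for all bounded continuous $g$, with $X_{k+1}-X_k$ bounded on the compact $\Omega$ — pass to the weak limit, that $\pi\mapsto d(\pi,\Pi(\boldsymbol\nu))$ is $1$-Lipschitz for $\calW$ (hence continuous for weak convergence on compact $\Omega$) so $d(\pi_\ast,\Pi(\boldsymbol\nu))=0$ and, $\Pi(\boldsymbol\nu)$ being closed, $\pi_\ast\in\mathcal M_{\boldsymbol\nu}(0)$; then $\limsup_n I_{\boldsymbol\nu}(\varepsilon_n)=\EE_{\pi_\ast}[f]\le I_{\boldsymbol\nu}(0)$ by bounded convergence.
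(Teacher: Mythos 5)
Your proposal is correct, but it reaches concavity by a genuinely different route than the paper. You derive concavity from the dual side: Theorem \ref{thm.strongdual} writes $I_{\bnu}(\varepsilon)$ as an infimum, over an $\varepsilon$-independent feasible set, of functions affine in $\varepsilon$ with slope $\gamma\ge 0$, so the pointwise infimum is concave; finiteness (from boundedness of the continuous $f$ on the compact $\Omega$) then gives continuity on the open interval. The paper instead argues primally: given $\pi^{(i)}\in\calM_{\bnu}(\varepsilon_i)$ it checks that the mixture $t\pi^{(1)}+(1-t)\pi^{(2)}$ lies in $\calM_{\bnu}(t\varepsilon_1+(1-t)\varepsilon_2)$ (convexity of $d(\cdot,\Pi(\bnu))$ via the Kantorovich dual, plus linearity of the martingale constraint in the measure) and takes suprema. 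The primal route is self-contained and does not consume Theorem \ref{thm.strongdual}; your dual route is shorter but inherits that theorem's hypothesis $\varepsilon>0$, which bites only at the boundary point $\varepsilon=0$ in the second assertion: your primary argument there (upper semicontinuity of an infimum of continuous functions) needs the duality representation to hold \emph{at} $\varepsilon=0$, which the stated theorem does not literally cover. You correctly identify this as the delicate point and supply the primal compactness/weak-limit argument as a fallback, and that fallback is essentially verbatim the paper's own proof of right-continuity at $0$ (the paper uses exact optimizers $\pi^{n}$, whose existence Theorem \ref{thm.strongdual} guarantees; your near-optimizers work equally well). One small simplification available to you: the $\liminf$ bound at $0$ follows immediately from monotonicity of $\varepsilon\mapsto\calM_{\bnu}(\varepsilon)$, so the concavity interpolation with $\lambda=1-\varepsilon/\varepsilon_1$ is not needed.
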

The detailed proof of Proposition \ref{prop.cotinuity_epsilon} is deferred to Section \ref{sect.proof_consistency}. Finally, we investigate the feasibility of $I_{n}(\varepsilon)$. The proof is deferred to Section \ref{sect.proof_feasiblity}.

\begin{lemma}
\label{lemma.feasibility} Let Assumption \ref{aspn.convexorder} and \ref{aspn.compact_support} hold. Then, for any $\varepsilon>0$ and $\delta\in(0,1)$,
there exist universal constants $C_{1}, C_{2}>0$, such that for any $n\geq N(\varepsilon,\delta):=
\frac{d^{2}}{C_{2}}\log\left(\frac{d C_{1}}{\delta}\right)\varepsilon^{-2}$, 
$I_{n}(\varepsilon)$
is feasible with probability at least $1-\delta$.
\end{lemma}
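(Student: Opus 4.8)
The plan is to exhibit, with probability at least $1-\delta$, one explicit element of $\calM_n(\varepsilon)$. The key observation is that feasibility of $I_n(\varepsilon)$ does \emph{not} require a martingale measure whose marginals equal the empirical ones $\widehat\mu_i^{(n)}$ (these generically violate convex ordering, so $\calM_{\widehat\bmu^n}(\Omega)$ may well be empty); it only asks for \emph{some} $\pi\in\calM(\Omega)$ with $d(\pi,\Pi(\widehat\bmu^n))\le\varepsilon$. We take $\pi$ to be a martingale measure consistent with the \emph{true} marginals and show it is automatically close to $\Pi(\widehat\bmu^n)$ once the empirical marginals are close to the true ones in Wasserstein distance. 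Concretely: by Assumption~\ref{aspn.convexorder} and Strassen's theorem \cite{strassen1965} there is $\pi^\ast\in\calM_{\bmu}(\Omega)\subseteq\calM(\Omega)$ with $\pi^\ast_i=\mu_i$, so it suffices to show $d(\pi^\ast,\Pi(\widehat\bmu^n))\le\varepsilon$ with probability $\ge 1-\delta$ for $n\ge N(\varepsilon,\delta)$.

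\emph{Gluing bound (the crux).} For each $i$ let $\gamma_i$ be an optimal coupling of $\mu_i$ and $\widehat\mu_i^{(n)}$ for $\calW$, with disintegration $\gamma_i(dx_i'\mid x_i)$ in the first coordinate. Draw $\bx\sim\pi^\ast$ and, conditionally on $\bx$ and independently over $i$, draw $x_i'\sim\gamma_i(\cdot\mid x_i)$; let $\pi'$ be the law of $\bx'$. Then $\pi'\in\Pi(\widehat\bmu^n)$ and the joint law of $(\bx,\bx')$ is a coupling of $\pi^\ast$ and $\pi'$, so
\[
d\!\left(\pi^\ast,\Pi(\widehat\bmu^n)\right)\le \calW(\pi^\ast,\pi')\le \EE\|\bX-\bX'\|_1=\sum_{i=1}^d\EE|X_i-X_i'|=\sum_{i=1}^d\calW\!\left(\mu_i,\widehat\mu_i^{(n)}\right).
\]
It remains to show $\sum_{i=1}^d\calW(\mu_i,\widehat\mu_i^{(n)})\le\varepsilon$ with probability $\ge1-\delta$.

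\emph{One-dimensional empirical Wasserstein estimate and concentration.} Put $L:=|\Omega_d|$; by Assumption~\ref{aspn.compact_support} each $\mu_i$ lives on an interval of length $\le L$. Using $\calW(\mu_i,\widehat\mu_i^{(n)})=\int|F_{\mu_i}(t)-\widehat F_i^{(n)}(t)|\,dt$ together with $\EE|F_{\mu_i}(t)-\widehat F_i^{(n)}(t)|\le\sqrt{F_{\mu_i}(t)(1-F_{\mu_i}(t))/n}\le 1/(2\sqrt n)$ gives $\EE\,\calW(\mu_i,\widehat\mu_i^{(n)})\le L/(2\sqrt n)$. Moreover $\calW(\mu_i,\widehat\mu_i^{(n)})$, as a function of the i.i.d.\ sample $X_i^{(1)},\dots,X_i^{(n)}$, has the bounded-differences property with constant $L/n$ (changing one sample moves one atom of mass $1/n$ by at most $L$), so McDiarmid's inequality yields $\PP\!\big(\calW(\mu_i,\widehat\mu_i^{(n)})\ge L/(2\sqrt n)+t\big)\le e^{-2nt^2/L^2}$. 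Taking $t=\varepsilon/(2d)$ and a union bound over $1\le i\le d$: with probability at least $1-d\,e^{-n\varepsilon^2/(2d^2L^2)}$ we have $\sum_i\calW(\mu_i,\widehat\mu_i^{(n)})\le \tfrac{dL}{2\sqrt n}+\tfrac{\varepsilon}{2}\le\varepsilon$ provided $n\ge d^2L^2/\varepsilon^2$, and the failure probability is $\le\delta$ provided $n\ge \tfrac{2d^2L^2}{\varepsilon^2}\log(d/\delta)$. Both requirements hold once $n\ge N(\varepsilon,\delta)=\tfrac{d^2}{C_2}\log(\tfrac{dC_1}{\delta})\varepsilon^{-2}$ for suitable constants (e.g.\ $C_2=1/(2L^2)$, $C_1=e$), which finishes the proof.

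The main obstacle is the gluing step: one must resist forcing the empirical marginals into convex order and instead recognize that the true-marginal martingale measure $\pi^\ast$ already lies within $\varepsilon$ of $\Pi(\widehat\bmu^n)$, then set up the coupling so that $\pi'$ has \emph{exactly} the empirical marginals while the transport cost splits as $\sum_i\calW(\mu_i,\widehat\mu_i^{(n)})$. Once that reduction is in place, the remaining estimates are standard concentration bounds for the one-dimensional empirical Wasserstein distance on a compact interval.
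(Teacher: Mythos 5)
Your proof is correct and follows essentially the same route as the paper: take a martingale measure $\pi^{\ast}$ with the true marginals (Strassen), couple it marginal-by-marginal to an element of $\Pi(\widehat{\bmu}^n)$ so that $d(\pi^{\ast},\Pi(\widehat{\bmu}^n))\leq\sum_{i=1}^d\calW(\mu_i,\widehat{\mu}_i^{(n)})$, and conclude by concentration of the empirical Wasserstein distances plus a union bound. The only difference is in how the concentration is justified --- the paper invokes Theorem 2 of \cite{fournier2015rate}, whereas you derive the one-dimensional bound directly from the CDF representation of $\calW$ and McDiarmid's inequality; both are valid, and your constants' explicit dependence on the support length matches the dependence hidden in the paper's ``universal'' constants.
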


\subsection{Sample Complexity}

In this section, we prove the claimed result in part B) of Section \ref{sec.main_contribution}, i.e., the convergence of $I_{n}(\varepsilon) $ to
$I(\varepsilon)$.

\begin{theorem}
\label{thm.distance_n} Let Assumption \ref{aspn.convexorder} and
\ref{aspn.compact_support} hold. Suppose that $|f|$ is bounded by $D$ on $\Omega$. Then, for any $\varepsilon>0$, $\delta\in(0,1)$ and
$n\geq N\left(  \frac{\varepsilon}{2},\frac{\delta}{2d}\right)  $ \emph{(}defined in
Lemma \ref{lemma.feasibility}\emph{)} , with probability at least $1-\delta$, we have
\begin{equation*}
\left|  I_{n}(\varepsilon) - I(\varepsilon)\right|  \le\frac{4dD}{\varepsilon}\sqrt
{\frac{\log\left(  2C_{1}d/\delta\right)  }{C_{2}n}},
\end{equation*}
where the universal constants $C_{1}, C_{2}>0$ are defined in Lemma
\ref{lemma.feasibility}.
\end{theorem}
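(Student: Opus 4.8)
The plan is to control $|I_n(\varepsilon) - I(\varepsilon)|$ by exploiting the dual reformulation in Corollary~\ref{cor.dual_reformulation}, which expresses both quantities as infima of the same functional, but integrated against $\widehat\mu_d^{(n)}$ versus $\mu_d$ (and with the normalization constraints $\int \beta_k\, d\widehat\mu_k^{(n)} = 0$ versus $\int \beta_k\, d\mu_k = 0$). The first step is to invoke Lemma~\ref{lemma.feasibility}: for $n \ge N(\varepsilon/2, \delta/(2d))$, on an event of probability at least $1-\delta/2$, the set $\mathcal{M}_n(\varepsilon/2) \neq \varnothing$, so in particular $I_n(\varepsilon)$ and $I_n(\varepsilon/2)$ are finite and the dual representation applies to $\boldsymbol{\nu} = \widehat\boldsymbol{\mu}^n$. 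Separately, on an event of probability at least $1-\delta/2$, the empirical marginals are close to the true ones: by the $\preceq_c$-ordering argument already used to prove Lemma~\ref{lemma.feasibility}, $\sum_{k=1}^d \calW(\mu_k, \widehat\mu_k^{(n)}) \le \eta_n$ where $\eta_n := d\sqrt{\log(2C_1 d/\delta)/(C_2 n)}$ (this is exactly the Wasserstein deviation bound underlying $N(\varepsilon,\delta)$). Intersect the two events.

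The core estimate is a Lipschitz-in-the-marginals bound: I claim that on this event,
\begin{equation*}
\bigl| I_n(\varepsilon) - I(\varepsilon) \bigr| \le \frac{2D}{\varepsilon}\sum_{k=1}^d \calW\bigl(\mu_k, \widehat\mu_k^{(n)}\bigr) \le \frac{2D\eta_n}{\varepsilon}.
\end{equation*}
To get this, fix a near-optimal dual triple $(\gamma, \boldsymbol{\alpha}, \boldsymbol{\beta})$ for $I(\varepsilon)$ from Corollary~\ref{cor.dual_reformulation}; the key point is the a~priori bound $\gamma \le 2D/\varepsilon$, obtained because the trivial feasible point $\beta_k \equiv 0$, $\alpha_k \equiv 0$, $\gamma = 0$ is not available unless $f \le 0$, but evaluating the dual objective at any feasible point with small $\gamma$ forces $\gamma\varepsilon$ to dominate, while $I(\varepsilon) \le D$ and $I(\varepsilon) \ge -D$ squeeze the optimal $\gamma$; more carefully, since $F(x_d;\gamma,\boldsymbol{\alpha},\boldsymbol{\beta}) \ge f(\bx')|_{\bx = \bx'} - \sum_{k=1}^{d-1}\beta_k(x_k) + \cdots$ and the normalized $\beta_k$ integrate to zero, one shows any optimal $\gamma$ satisfies $\gamma \le 2D/\varepsilon$. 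Then I transport this triple to a feasible triple for the $I_n(\varepsilon)$ problem: the constraint $\int \beta_k\, d\widehat\mu_k^{(n)} = 0$ is restored by subtracting the constant $\int \beta_k\, d\widehat\mu_k^{(n)}$, which (a) does not change $F$ because $F$ is invariant under shifting $\beta_k$ by constants, and (b) changes the objective's linear term by at most the sum of these constants. The difference between the two objectives is then $\sum_{k=1}^{d-1}\bigl(\int \beta_k\, d\mu_k - \int \beta_k\, d\widehat\mu_k^{(n)}\bigr) + \int F\, d\mu_d - \int F\, d\widehat\mu_d^{(n)}$; since $\beta_k$ and $x_d \mapsto F(x_d;\cdots)$ are all $\gamma$-Lipschitz in their respective arguments (the $-\gamma|x_k - x_k'|$ term makes $F$ $\gamma$-Lipschitz in $x_d$ via the $k=d$ summand — note there is no $\beta_d$ inside $F$), Kantorovich--Rubinstein duality bounds each difference by $\gamma\, \calW(\mu_k, \widehat\mu_k^{(n)})$. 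Summing and using $\gamma \le 2D/\varepsilon$ gives $I_n(\varepsilon) \le I(\varepsilon) + (2D/\varepsilon)\sum_k \calW(\mu_k,\widehat\mu_k^{(n)})$. The reverse inequality is symmetric, starting from a near-optimal dual triple for $I_n(\varepsilon)$ (here we need the a~priori $\gamma$-bound on the empirical side, which is why feasibility of $\mathcal{M}_n(\cdot)$ at a slightly smaller radius — hence the $N(\varepsilon/2,\cdot)$ in the hypothesis — is used to keep $\gamma \le 4D/\varepsilon$ or similar). Tracking constants carefully through both directions yields the stated bound $\frac{4dD}{\varepsilon}\sqrt{\log(2C_1 d/\delta)/(C_2 n)}$.

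The main obstacle I anticipate is the a~priori bound on the optimal dual multiplier $\gamma$, and in particular making it uniform on the empirical side where the radius $\varepsilon$ interacts with the (random) feasibility threshold $R_{\widehat\boldsymbol{\mu}^n}$. This is exactly where Lemma~\ref{lemma.feasibility} enters with the halved arguments $N(\varepsilon/2, \delta/(2d))$: one needs $\varepsilon$ to exceed $R_{\widehat\boldsymbol{\mu}^n}$ by a definite margin so that the dual optimum is attained at finite $\gamma$ with a quantitative bound, rather than $\gamma$ blowing up as the feasible set degenerates. A secondary technical point is verifying the Lipschitz constant of $F(\cdot;\gamma,\boldsymbol{\alpha},\boldsymbol{\beta})$ in $x_d$ is exactly $\gamma$ uniformly over $(\boldsymbol{\alpha},\boldsymbol{\beta})$ — this is immediate from the supremum representation since $x_d$ enters $F$ only through $-\gamma|x_d - x_d'|$ and taking suprema preserves the $\gamma$-Lipschitz modulus — but it must be stated cleanly so that Kantorovich--Rubinstein applies to the continuous (hence, on the compact $\Omega_d$, bounded) function $F$.
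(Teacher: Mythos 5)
Your overall strategy (dual representation from Corollary \ref{cor.dual_reformulation}, an a priori bound $\gamma\lesssim D/\varepsilon$ on the dual multiplier, and Kantorovich--Rubinstein to convert the change of marginals into a Wasserstein bound) is the same as the paper's, and your treatment of the $\gamma$-bound and of the $\gamma$-Lipschitz property of $x_d\mapsto F(x_d;\gamma,\balpha,\bbeta)$ is essentially right. However, there is a genuine gap in the core estimate: you change all $d$ marginals simultaneously, and the resulting difference of objectives contains the terms $\int\beta_k\,d\mu_k-\int\beta_k\,d\widehat\mu_k^{(n)}$ for $k=1,\dots,d-1$, which you bound by $\gamma\,\calW(\mu_k,\widehat\mu_k^{(n)})$ on the asserted grounds that ``$\beta_k$ \dots are all $\gamma$-Lipschitz in their respective arguments.'' This is not true as stated: the dual potentials $\beta_k$ are arbitrary continuous functions, and nothing in the dual formulation forces them to be Lipschitz, let alone with constant $\gamma$. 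Only $F$ inherits the $\gamma$-Lipschitz modulus automatically, because $x_d$ enters the supremand solely through $-\gamma|x_d-x_d'|$. Regularity of the $\beta_k$'s is exactly the hard part of multi-period MOT duality that the paper flags in its literature review, so this step cannot be waved through.

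The paper avoids the issue entirely by a different decomposition: it telescopes through the hybrid marginal vectors $\bmu_0,\bmu_1,\dots,\bmu_d$, changing \emph{one} marginal at a time, and invokes the symmetry of the dual representation (Remark \ref{remark.symmetry}) so that in each comparison the single marginal that changes is precisely the one integrated against $F$. The normalization constraints $\int\beta_k\,d\nu_k=0$ then refer to marginals that are identical in the two problems being compared, so no Lipschitz control on any $\beta_k$ is ever needed; only the automatic $\gamma$-Lipschitz property of $F$ is used, once per step, giving the factor $d$ in the final bound. Your route could in principle be repaired by a c-transform (inf-convolution) argument showing that one may replace each $\beta_k$ by the pointwise smallest admissible potential, which is a supremum of $\gamma$-Lipschitz functions of $x_k$ and hence $\gamma$-Lipschitz, without increasing the dual objective; but as written, the Lipschitz claim on the $\beta_k$'s is an unproved assertion on which the whole estimate rests.
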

The above concentration result shows that $I_n(\varepsilon)$, as an estimator to $I(\varepsilon)$, is asymptotically consistent. Specifically, the gap $|I_n(\varepsilon) - I(\varepsilon)|$ is of order $O\left(n^{-1/2}\right)$ with high probability. Moreover, the dependency of the gap on $d$, the number of steps, is $\widetilde{O}(d)$. Here, $a_n = \widetilde{O}(b_n)$ means $a_n = O(b_n)$ up to a poly-log factor of $b_n$.  
The detailed proof of Theorem \ref{thm.distance_n} is deferred to Section \ref{sect.proof_theorem_distance_n}.

Our result can be further generalized to the case that each marginal $\mu_i$ is $k$-dimensional probability measure (Hence, the underlying martingale is a $k$-dimensional, $d$ steps martingale). For each $1\leq i\leq d$, assume that $\Omega_i$ is a compact subset of $\RR^k$ ($k\geq2$), and $\mu_i\in\calP(\Omega_i)$. Moreover, The underlying probability measure in  $\Pi(\bmu)$ is now a martingale measure in $\mathbb{R}^{k\times d}$. The following result (parallel to Theorem \ref{thm.distance_n}) shows that $|I_n(\varepsilon) - I(\varepsilon)|$ is of order $O\left(n^{-1/k}\right)$ with high probability, and the gap also has $\widetilde{O}(d)$ dependency on $d$.

\begin{theorem}
\label{thm.general_distance_n} 
Given $k\geq 2$. For all $1\leq i\leq d$, let $\Omega_i$ be compact subset of $\RR^k$ and $\mu_i\in\calP(\Omega_i)$. Assume that $\mu_1\preceq_c\cdots\preceq_c\mu_d$, and $|f|$ is bounded by $D$ on $\Omega$. Then, for any $\varepsilon>0$, $\delta\in(0,1)$ and
$n\geq \frac{2^kd^k}{C_2'\varepsilon^k} \log\left(\frac{2d^2C_1'}{\delta}\right) $, with probability at least $1-\delta$, we have 
\begin{equation*}
\left|  I_{n}(\varepsilon) - I(\varepsilon)\right|  \leq\frac{4dD}{\varepsilon}\left(\frac{\log(2C_1'd/\delta)}{C_2'}\right)^{1/k}\frac{1}{n^{1/k}},
\end{equation*}
where $C_1', C_2'$ are universal constants.
\end{theorem}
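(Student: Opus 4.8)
The plan is to mirror the one-dimensional argument of Theorem~\ref{thm.distance_n} and track how the $k$-dimensional Wasserstein rate enters. The whole proof of Theorem~\ref{thm.distance_n} rests on two ingredients: (i) a feasibility statement (Lemma~\ref{lemma.feasibility}) guaranteeing that $\calM_n(\varepsilon)$ is nonempty once $n$ is large enough, so that $I_n(\varepsilon)$ is well-defined and we may safely apply strong duality; and (ii) a Lipschitz-in-the-marginals estimate for the value $I_{\boldsymbol\nu}(\varepsilon)$ using the dual reformulation in Corollary~\ref{cor.dual_reformulation}. The passage from $d=1$ component dimension to general $k$ only affects the quantitative input to both ingredients, namely the rate of convergence of the empirical measure $\widehat\mu_i^{(n)}$ to $\mu_i$ in $\calW$ (the $\ell_1$-Wasserstein distance on $\RR^k$). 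For compactly supported measures on $\RR^k$ with $k\geq 2$, one has $\EE\,\calW(\widehat\mu_i^{(n)},\mu_i)=O(n^{-1/k})$, together with a sub-Gaussian-type concentration; this is the only place the exponent $1/k$ comes from.

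Concretely, I would proceed as follows. First, adapt Lemma~\ref{lemma.feasibility} to dimension $k$: combining the convex ordering $\mu_1\preceq_c\cdots\preceq_c\mu_d$ with the concentration bound for $\calW(\widehat\mu_i^{(n)},\mu_i)$ in $\RR^k$ yields that with probability at least $1-\delta$, each $\calW(\widehat\mu_i^{(n)},\mu_i)\le \varepsilon/(2d)$ once $n\geq \frac{2^kd^k}{C_2'\varepsilon^k}\log(\tfrac{2d^2C_1'}{\delta})$; the union over $1\le i\le d$ and the fact that the true $\bmu$ admits a consistent martingale measure then give that $\calM_n(\varepsilon/2)$ (hence $\calM_n(\varepsilon)$) is nonempty, so $I_n(\varepsilon)$ is feasible on this event. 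The constants $C_1',C_2'$ are exactly those governing the $\RR^k$-Wasserstein concentration inequality. Second, on that same high-probability event, I would use the dual reformulation: from Corollary~\ref{cor.dual_reformulation} applied to $\bmu$ and to $\widehat\bmu^n$ with the common normalization $\int\beta_k\,d\nu_k=0$, write $I(\varepsilon)$ and $I_n(\varepsilon)$ as infima over the \emph{same} feasible set $\mathcal S$, differing only in the integrating measure ($\mu_d$ versus $\widehat\mu_d^{(n)}$, say, or by Remark~\ref{remark.symmetry} any index). Bounding $|I_n(\varepsilon)-I(\varepsilon)|$ reduces to controlling $\sup_{(\gamma,\balpha,\bbeta)}\big|\int F(x_k;\gamma,\balpha,\bbeta)\,(\mu_k-\widehat\mu_k^{(n)})(dx_k)\big|$ over near-optimal dual variables. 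The key observation, just as in the $d=1$ case, is that only dual variables with $\gamma\le 2dD/\varepsilon$ need be considered (otherwise the objective exceeds a trivial upper bound on $I$), and for such $\gamma$ the function $x_k\mapsto F(x_k;\gamma,\balpha,\bbeta)$ is $\gamma$-Lipschitz (indeed $2dD/\varepsilon$-Lipschitz) in $x_k$ because it enters $F$ only through the term $-\gamma|x_k-x_k'|$. By Kantorovich--Rubinstein duality, the difference of integrals is then at most $\frac{2dD}{\varepsilon}\,\calW(\mu_k,\widehat\mu_k^{(n)})\le \frac{2dD}{\varepsilon}\cdot\frac{2}{\ }\big(\tfrac{\log(2C_1'd/\delta)}{C_2'n}\big)^{1/k}$ on the event above, which rearranges to the claimed bound $\frac{4dD}{\varepsilon}\big(\tfrac{\log(2C_1'd/\delta)}{C_2'}\big)^{1/k}n^{-1/k}$.

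There are a couple of points that need care but should go through. One is verifying that $F(x_k;\gamma,\balpha,\bbeta)$ is genuinely Lipschitz in $x_k$ uniformly over $\balpha,\bbeta$: since $\alpha_j\in C(\Omega_{1:j})$ and $\beta_j\in C(\Omega_j)$ are continuous on compact sets they are bounded, and crucially $x_k$ (for a fixed distinguished index) appears in the supremum defining $F$ only via $-\gamma\|x_k-x_k'\|_1$ — the $\alpha$-increments and the other $\beta_j$'s involve the primed variables or other coordinates — so the supremand is $\gamma$-Lipschitz in $x_k$ and hence so is the supremum. One must also confirm $\calW$ here is the $\ell_1$-transport cost on $\RR^k$ and that the Lipschitz constant in the $\ell_1$ norm matches, which is automatic from the definition of $H$ via $\|\cdot\|_1$. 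A second point is that to even speak of $I_n(\varepsilon)$ and invoke duality we must be on the feasibility event; outside it the statement is vacuous because it is only asserted with probability $1-\delta$, and the threshold on $n$ is precisely the one making the feasibility event have probability $\ge 1-\delta$.

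The main obstacle is the dimension-$k$ Wasserstein concentration: establishing the right-form inequality $\PP\big(\calW(\widehat\mu_i^{(n)},\mu_i)>t\big)\le C_1' e^{-C_2' n t^k}$ (equivalently, that $n\ge \tfrac{2^kd^k}{C_2'\varepsilon^k}\log(\tfrac{2d^2C_1'}{\delta})$ forces $\calW\le\varepsilon/(2d)$ with the stated probability). For $k\ge 2$ the \emph{expected} Wasserstein distance scales like $n^{-1/k}$ (Dudley / Fournier--Guillin), and one needs a concentration inequality around that mean — typically via bounded differences (McDiarmid) since changing one of the $n$ samples moves $\calW$ by $O(\mathrm{diam}(\Omega_i)/n)$, giving a sub-Gaussian tail whose exponent one then rewrites in the $n t^k$ form valid down to the scale where $t$ is comparable to the mean. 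Assembling these standard facts into the clean constants $C_1',C_2'$ and checking the union bound over the $d$ marginals (the $d^2$ inside the log accounts for one factor $d$ from the union and the threshold's $d^k$ for the per-marginal tolerance $\varepsilon/(2d)$) is where the real bookkeeping lies; everything downstream is a verbatim repeat of the proof of Theorem~\ref{thm.distance_n} with $n^{-1/2}$ replaced by $n^{-1/k}$.
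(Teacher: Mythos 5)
Your proposal is correct and follows essentially the same route as the paper: the paper's proof simply invokes Theorem 2 of Fournier--Guillin for the $\RR^k$ Wasserstein concentration bound $\PP(\calW(\widehat\mu_i^{(n)},\mu_i)>t)\le C_1'e^{-C_2'nt^k}$, then repeats the telescoping/dual-reformulation/bounded-$\gamma$/Lipschitz argument of Theorem~\ref{thm.distance_n} verbatim with $n^{-1/2}$ replaced by $n^{-1/k}$. The only difference is that you propose to re-derive that concentration inequality (Dudley plus McDiarmid) where the paper just cites it; your bookkeeping of the constants ($\gamma$-bound and the factor of $d$) rearranges to the same final estimate.
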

The proof of Theorem \ref{thm.general_distance_n} is essentially the same as the proof of Theorem \ref{thm.distance_n}, while the only difference is we need to apply the Wasserstein concentration bound for probability measures in general dimensions. We sketch the proof of Theorem \ref{thm.general_distance_n} in Section \ref{sect.proof_theorem_distance_n}.

\subsection{Finite-Dimensional Linear Programming (LP) Reduction} \label{subsec.discretization}

Although in the formulation of $I_n(\varepsilon)$, we replace each $\mu_i$ by its empirical version $\widehat{\mu}^{(n)}_i$, the computation of $I_n(\varepsilon)$ is still not feasible. This fact can be viewed in different perspectives. In view of the primal formulation, we allow the candidate measures be any martingale measures that support on $\Omega$ with each marginals sufficiently close to the empirical marginals, so $I_n(\varepsilon)$ is by default an infinite-dimensional LP. From the perspective of dual formulation (Theorem \ref{thm.strongdual}), we  optimize the problem over functional space, which is also computationally intractable. Therefore, we need to discretize the support of the candidate measures in the Wasserstein ball, or equivalently, discretize the support of dual variables in the dual representation. In this section, we propose an uniform grid discretization, and prove the approximation error induced by such discretization.

We assume in this section that each $\Omega_{i}$ (the support of $\mu_{i}$) is
a closed interval $[a_{i}, b_{i}]$ in $\mathbb{R}$ with Lebesgue measure $l_{i}:=b_{i} -
a_{i}$. As a direct consequence of Assumption \ref{aspn.compact_support}, $l_1\leq\cdots\leq l_d$. For each
$i\in\left\lbrace 1,\cdots,d \right\rbrace $, denoted by $\Omega_{i}^{(N)}$,
the $N$-fold discretization of $\Omega_{i} = [a_{i}, b_{i}]$ is defined by%

\begin{equation}\label{eq.discrete_support}
\Omega_{i}^{(N)} = \left\lbrace a_{i} + \frac{kl_i}{N}: k=0,
1,\cdots,N\right\rbrace.
\end{equation}
Moreover, let $\Omega^{(N)}$ denote the Cartesian product $\bigtimes_{i=1}^d\Omega_i^{(N)}$. For technical convenience, we define the following relaxation of martingale measure.

\begin{definition}
($\delta$-martingale probability measure) A probability measure $\pi$ on
$\mathbb{R}^{d}$ is a $\delta$-martingale probability measure if
\[
\left|  \mathbb{E}_{\pi}\left[  X_{k+1}|\mathcal{F}_{k}\right]
- X_{k}\right|  \leq\delta, \quad\pi- \text{a.e. for } 1\leq k\leq d-1, 
\]
where $\mathcal{F}_{k}=\sigma(X_{1},\cdots,X_{k})$
is the canonical filtration. Moreover, we define $\calM_{\boldsymbol{\mu}}(\delta)$ to be the set of $\delta$-martingale probability measures that
contained in $\Pi(\boldsymbol{\mu})$.
\end{definition}

It is straight forward to see that, a $\delta$-martingale is close to a true martingale when $\delta$ is small. 
Let $\mathcal{M}\left( \Omega^{(N)};\delta\right)  $ be the
set of all the $\delta-$martingale measures that supported on $\Omega^{(N)}$, now we define
\begin{equation*}
I_{n,N}(\varepsilon,\delta) := \sup
_{\pi\in\calM_{n, N}(\varepsilon,\delta
)}\mathbb{E}_{\pi}\left[  f(\bX)\right],
\end{equation*}
where 
\begin{equation*}
\mathcal{M}_{n, N}( \varepsilon, \delta) :=
\left\lbrace \pi\in\mathcal{M}\left(\Omega^{(N)}
;\delta\right)  :
d\left(\pi, \Pi\left(\boldsymbol{\widehat\mu}^{n}\right)\right)\leq\varepsilon\right\rbrace.
\end{equation*}
It is easy to see that $I_{n,N}(\varepsilon, \delta) $
is computable. Precisely, we can solve this problem via its dual formulation. Suppose that we have samples $X_i^{(1)},\cdots, X_i^{(n)}\sim\mu_i$ for $1\leq i\leq d$. Then, 
\begin{align*}
    I_{n,N}(\varepsilon, \delta) = \text{inf} \quad &
\gamma\varepsilon+ \frac{1}{n}\sum_{i=1}^{d}\sum_{j=1}^n\beta_{i,j}\\
\text{subject to} \quad &  
\gamma\in\RR_{\geq0},\quad \alpha_{k_1,\cdots,k_i}\in\RR,\quad \beta_{i,j}\in\RR \textrm{ for all } 1\leq j\leq n, 0\leq k_i\leq N, 1\leq i\leq d;\\
&  f\left(a_1+\frac{k_1l_1}{N},\cdots, a_d+\frac{k_dl_d}{N}\right) +\sum_{i=1}^{d-1}\alpha_{k_1,\cdots,k_i}\left(a_{i+1}-a_i+\frac{k_{i+1}l_{i+1}-k_il_i}{N}\right)\\
& +\sum_{i=1}^{d-1}\delta|\alpha_{k_1,\cdots, k_i}|-\gamma\sum_{i=1}^d\left|X_i^{(j_i)} - \left(a_i + \frac{k_il_i}{N}\right)\right|-\sum_{i=1}^d\beta_{i, j_i}\leq 0,\\
&\textrm{where } 1\leq j_i\leq n, 0\leq k_i\leq N, 1\leq i\leq d.
\end{align*}
Thus, $I_{n,N}(\varepsilon, \delta) $ is a finite-dimensional LP, which can be solved by generic LP Algorithm. The following lemma allows us to construct a discrete 
$\delta$-martingale measure that arbitrarily close to a given martingale measure under Wasserstein distance, the proof is
deferred to Section \ref{sect.proof_discretize_lemma}.

\begin{lemma}
\label{lemma.discretize} Let Assumption \ref{aspn.compact_support} hold. Suppose that $\Omega_i = [a_i, b_i]$ and $l_i = b_i - a_i$ for all $1\leq i\leq d$. Let $l:=\max_{1\leq i\leq d}l_i = l_d$. Then, for any 
$\varepsilon>0$, any martingale probability measure $\pi$ on
$\Omega$ and integer $N$, there exists a
$l/N$-martingale probability measure $\pi^{(N)}$, such that

\begin{enumerate}
\item[\emph{(1)}] $\pi^{(N)}$'s $i$-th marginal $\pi^{(N)}_{i}$ is supported on $\Omega_{i}^{(N)}$ \emph{(}defined in \eqref{eq.discrete_support}\emph{)} for all
$1\leq i\leq d$;

\item[\emph{(2)}] $\mathcal{W}\left(  \pi,\pi^{(N)}\right)  \leq
\frac{l\sqrt{d}}{N}$, i.e., $\pi$ and $\pi^{(N)}$ are close under
Wasserstein distance.
\end{enumerate}
\end{lemma}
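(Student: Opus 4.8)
The plan is to obtain $\pi^{(N)}$ by rounding $\pi$ onto the grid coordinate-by-coordinate, and then to verify separately the marginal support, the Wasserstein displacement, and the $\tfrac{l}{N}$ martingale defect. For each $i$, write $\Omega_i=[a_i,b_i]$ as the union of the $N$ cells $\bigl[a_i+\tfrac{(k-1)l_i}{N},\,a_i+\tfrac{kl_i}{N}\bigr]$, $1\le k\le N$, whose endpoints are exactly $\Omega_i^{(N)}$, and let $T_i:\Omega_i\to\Omega_i^{(N)}$ send each point to the nearer endpoint of the cell containing it (ties broken, say, downward); then $T_i$ is Borel measurable and $|T_i(x)-x|\le\tfrac{l_i}{2N}\le\tfrac{l}{2N}$ for every $x\in\Omega_i$. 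Set $\bY(\bx):=(T_1(x_1),\dots,T_d(x_d))$ and let $\pi^{(N)}$ be the pushforward of $\pi$ under $\bY$, i.e. the law of $(Y_1,\dots,Y_d):=\bY(\bX)$ when $\bX\sim\pi$. Then property (1) is immediate, and for property (2) the map $\bx\mapsto(\bx,\bY(\bx))$ is a coupling of $\pi$ and $\pi^{(N)}$, so $\calW(\pi,\pi^{(N)})\le\EE_\pi\sum_{i=1}^d|X_i-T_i(X_i)|\le\tfrac{1}{2N}\sum_{i=1}^d l_i$, which yields the claimed Wasserstein bound.

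The substantive part is the $\tfrac{l}{N}$-martingale property of $\pi^{(N)}$. Since $\pi^{(N)}$ is the pushforward of $\pi$ under $\bY$, this amounts to showing $\bigl|\EE_\pi[Y_{k+1}\mid\calF_k^Y]-Y_k\bigr|\le\tfrac{l}{N}$ $\pi$-a.s.\ for $1\le k\le d-1$, where $\calF_k^Y:=\sigma(Y_1,\dots,Y_k)$. The observation that makes this go through is that each $Y_j$ is a measurable function of $X_j$ alone, so $\calF_k^Y\subseteq\calF_k^X:=\sigma(X_1,\dots,X_k)$ and the tower property is available. Using that $\pi$ is a genuine martingale measure,
\[
\EE_\pi[Y_{k+1}\mid\calF_k^X]=\EE_\pi[X_{k+1}\mid\calF_k^X]+\EE_\pi[Y_{k+1}-X_{k+1}\mid\calF_k^X]=X_k+R_k,
\]
with $|R_k|\le\EE_\pi\bigl[\,|Y_{k+1}-X_{k+1}|\mid\calF_k^X\bigr]\le\tfrac{l}{2N}$. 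Conditioning further on $\calF_k^Y$ and writing $X_k=Y_k+(X_k-Y_k)$ gives
\[
\EE_\pi[Y_{k+1}\mid\calF_k^Y]=\EE_\pi[X_k+R_k\mid\calF_k^Y]=Y_k+\EE_\pi[X_k-Y_k\mid\calF_k^Y]+\EE_\pi[R_k\mid\calF_k^Y],
\]
and both correction terms are at most $\tfrac{l}{2N}$ in absolute value — the first because $|X_k-Y_k|\le\tfrac{l_k}{2N}\le\tfrac{l}{2N}$, the second by the bound on $R_k$. Adding them yields the defect bound $\tfrac{l}{N}$, which is exactly the $\tfrac{l}{N}$-martingale property.

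I expect the only real obstacle to be the passage to the coarser $\sigma$-algebra in the second display: one must make sure that conditioning the $\calF_k^X$-measurable quantity $X_k+R_k$ down to $\calF_k^Y$ costs only $O(l/N)$, and this is precisely what the coordinate-wise deterministic rounding buys us — it guarantees the inclusion $\calF_k^Y\subseteq\calF_k^X$ and keeps $X_k-Y_k$ a pointwise perturbation of size at most $\tfrac{l}{2N}$. It is worth noting that the tempting alternative of a randomized conditional-mean-preserving rounding, which would force $R_k\equiv0$, does not actually help: it destroys the inclusion $\calF_k^Y\subseteq\calF_k^X$ and reintroduces an error of the same order once one conditions on the rounded past. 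Hence the deterministic construction above is the cleanest route, and everything else is routine two-sided rounding bookkeeping.
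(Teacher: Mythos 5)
Your proof is correct, and it takes a genuinely different route from the paper's. The paper uses exactly the ``randomized conditional-mean-preserving rounding'' that you dismiss at the end: each coordinate $X_i$ is split between the two endpoints of its cell with the linear-interpolation weights $g^{(\pm1)}_{i,k_i}$, so that the rounded coordinate has conditional mean $X_i$ given $\bX$. You are right that this destroys the inclusion $\calF_k^Y\subseteq\calF_k^X$, but the paper copes by never invoking the tower property: it computes $\EE\bigl[Z_{j+1}\,\1(Z_{1:j}=z_{1:j})\bigr]$ directly as an integral of $x_{j+1}$ against the weight $\prod_i g^{(t_i)}_{i,k_i}(x_i)$, which is a continuous $\calF_j^X$-measurable test function, applies the martingale property of $\pi$ to replace $x_{j+1}$ by $x_j$, and then pays $|x_j-z_j|\le l_j/N$ once. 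So the randomized construction does work, with a single error source of size $l_j/N$ versus your two error sources of size $l/(2N)$ each --- the same final bound of $l/N$, consistent with your prediction that the order is unchanged. What the paper's construction buys, and yours does not, is exact preservation of the marginal means ($\EE[Z_i]=\EE[X_i]$); this is irrelevant for the present lemma but is exploited verbatim in Lemma \ref{lemma.discretize_general}, where the discretized marginals must remain mean zero. What your construction buys is a shorter verification (tower property through the finer filtration) and a factor-two smaller per-coordinate displacement.

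One small point on claim (2): with the paper's $\ell_1$ transport cost, your displacement bound is $\frac{1}{2N}\sum_i l_i\le\frac{dl}{2N}$, which implies the stated $\frac{l\sqrt{d}}{N}$ only for $d\le 4$; you should not assert that it ``yields the claimed bound'' without comment. The paper's own proof carries the identical blemish (its honest $\ell_1$ bound is $\frac{dl}{N}$, and the $\sqrt{d}$ is only justified for test functions that are $1$-Lipschitz in the Euclidean norm), so this is an inherited inconsistency in the lemma's constant rather than a defect of your argument, but it is worth flagging.
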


Before stating our main result in this section, we impose the following strict dispersion assumption which is slightly stronger than Assumption \ref{aspn.disperse}. This is an natural assumption following Assumption \ref{aspn.convexorder} and \ref{aspn.compact_support}, because the existence of martingale measure automatically enforced the support of each marginal measure to be enlarged from the intial step to the last step.

\begin{assumption}
\label{aspn.disperse}
\[
\tau:= \min_{1\leq i\leq d-1}\left\lbrace a_{i} - a_{i+1}\right\rbrace
\wedge\min_{1\leq i\leq d-1}\left\lbrace b_{i+1} - b_{i}\right\rbrace >0,
\]
i.e, the support of marginals become more and more dispersed over time.
\end{assumption}
Next we show that, under mild condition on the payoff function $f$, we can select $\varepsilon_N$ and $\delta_N$ properly, such that $I_{n,N}(\varepsilon_N,\delta_N)$ can be arbitrarily close to $I_n(\varepsilon)$ as $N$ increase.

\begin{theorem}
\label{thm.discretize_approximation} Suppose that $\Omega_i = [a_i, b_i]$ and $l_i = b_i - a_i$ for all $1\leq i\leq d$. Let $l:=\max_{1\leq i\leq d}l_i = l_d$. If $f$ is $L$-Lipschitz which absolute value is bounded by $D$ on $\Omega$, and Assumption \ref{aspn.disperse} hold. Then, for any $\varepsilon>0$, $\delta\in(0,1)$,
with probability at least $1-\delta$,  we have
\begin{equation}
\label{eq.discretize_approximation}\left|  I_n(
\varepsilon) - I_{n,N}\left(\varepsilon+
\frac{ld^{1/2}}{N}, \frac{l}{N}\right)  \right|
\leq\frac{c}{\varepsilon}\cdot\frac{1}{N}
\end{equation}
holds when $n\geq N(\varepsilon,\delta) = \frac{d^{2}}{C_{2}}\log\left(
\frac{dC_{1}}{\delta}\right)  \varepsilon^{-2}$, where the universal constants $C_{1}, C_{2}>0$ are defined in Lemma
\ref{lemma.feasibility}, the constant $c$ depends on $L, D, d, l$ and $\tau$.
\end{theorem}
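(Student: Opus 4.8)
The plan is to bound the two-sided error $|I_n(\varepsilon) - I_{n,N}(\varepsilon + l d^{1/2}/N, l/N)|$ by proving two matching inequalities, one for each direction, each by an explicit transportation/coupling argument. For the direction $I_{n,N}(\varepsilon + ld^{1/2}/N, l/N) \geq I_n(\varepsilon) - O(1/(\varepsilon N))$, I would take a near-optimal primal martingale measure $\pi \in \calM_n(\varepsilon)$ for $I_n(\varepsilon)$ and push it onto the grid $\Omega^{(N)}$ using Lemma~\ref{lemma.discretize}. This produces a $(l/N)$-martingale measure $\pi^{(N)}$ supported on $\Omega^{(N)}$ with $\calW(\pi, \pi^{(N)}) \leq l\sqrt{d}/N$. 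The triangle inequality for $d(\cdot, \Pi(\widehat\bmu^n))$ then gives $d(\pi^{(N)}, \Pi(\widehat\bmu^n)) \leq \varepsilon + l\sqrt{d}/N$, so $\pi^{(N)}$ is feasible for $I_{n,N}(\varepsilon + ld^{1/2}/N, l/N)$; and since $f$ is $L$-Lipschitz, $|\EE_{\pi}[f] - \EE_{\pi^{(N)}}[f]| \leq L\,\calW(\pi,\pi^{(N)}) \leq Ll\sqrt{d}/N$. Taking the supremum and letting the sub-optimality gap vanish yields this direction, with a clean $O(1/N)$ bound that does not even need the $1/\varepsilon$ factor.

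The harder direction is $I_n(\varepsilon) \geq I_{n,N}(\varepsilon + ld^{1/2}/N, l/N) - O(1/(\varepsilon N))$: starting from a near-optimal $\pi^{(N)} \in \calM_{n,N}(\varepsilon + ld^{1/2}/N, l/N)$, which is only an approximate ($(l/N)$-)martingale and whose marginals are within $\varepsilon + ld^{1/2}/N$ of $\widehat\bmu^n$, I must produce a genuine martingale measure $\pi \in \calM_n(\varepsilon)$ with $\EE_\pi[f]$ close to $\EE_{\pi^{(N)}}[f]$. This requires (i) a \emph{martingale projection} step — correcting a $\delta$-martingale into an exact martingale while controlling the Wasserstein displacement, which is where Assumption~\ref{aspn.disperse} enters, since the strict dispersion $\tau>0$ of the supports gives the "slack" needed to nudge conditional means back onto their targets without leaving $\Omega$; and (ii) a \emph{marginal-radius correction} step, rescaling the uncertainty budget from $\varepsilon + ld^{1/2}/N$ back down to $\varepsilon$. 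For (ii) I expect to interpolate/contract the corrected measure toward a reference element of $\Pi(\widehat\bmu^n)$ (or toward a fixed martingale measure) along a geodesic-like combination, losing a multiplicative factor $\varepsilon/(\varepsilon + ld^{1/2}/N) = 1 - O(1/(\varepsilon N))$; since $|f|\le D$ this contraction costs at most $O(D/(\varepsilon N))$ in the objective, which is exactly the source of the $c/\varepsilon \cdot 1/N$ form of the bound. The condition $n \geq N(\varepsilon,\delta)$ and Lemma~\ref{lemma.feasibility} are invoked to guarantee that on a $1-\delta$ event all the intermediate sets ($\calM_n(\varepsilon)$, the image sets after projection) are nonempty, so the argument is not vacuous.

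The main obstacle is step (i), the martingale-correction estimate: given a $(l/N)$-martingale $\pi^{(N)}$ on the grid, explicitly build a coupling to an exact martingale whose cost is $O(1/N)$ per step (hence $O(d/N)$, absorbed into the constant $c$ depending on $d, l, \tau, L, D$) while keeping the support inside $\Omega$ and not disturbing the marginals by more than $O(1/N)$. I would handle this inductively over the $d$ time steps, at each step redistributing mass within the conditional law of $X_{k+1}$ given $\calF_k$ so that its mean shifts by the required $\le l/N$ back to $X_k$; the dispersion assumption ensures there is room on both sides of $\Omega_{k+1}$ relative to $\Omega_k$ to perform this shift, and a routine bookkeeping of how the correction at step $k$ perturbs the laws at steps $>k$ closes the induction. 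Once both inequalities are in hand, combining them gives \eqref{eq.discretize_approximation} with $c = c(L,D,d,l,\tau)$, valid on the same $1-\delta$ event supplied by Lemma~\ref{lemma.feasibility}.
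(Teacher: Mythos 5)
Your first inequality, $I_n(\varepsilon) - I_{n,N}(\varepsilon + ld^{1/2}/N, l/N) \le Lld^{1/2}/N$, is exactly the paper's argument: push a near-optimal $\pi\in\calM_n(\varepsilon)$ onto the grid via Lemma~\ref{lemma.discretize}, use the triangle inequality for $d(\cdot,\Pi(\widehat\bmu^n))$ to check feasibility, and pay $L\,\calW(\pi,\pi^{(N)})$ in the objective. That half is fine.

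For the reverse inequality your route diverges from the paper's, and the divergence is where the gap sits. The paper never attempts a primal ``martingale projection'': it works entirely on the dual side. Using Corollary~\ref{cor.dual_reformulation} and the bound $\gamma\le 4D/\varepsilon$ from Theorem~\ref{thm.distance_n}, it shows by induction (this is the sole place Assumption~\ref{aspn.disperse} enters) that the optimal trading strategies satisfy $\|\alpha_k\|_\infty\le B_k\le(1+l/\tau)^{k-1}B_1$; it then writes a strong-duality formula for the continuum $\delta$-martingale relaxation $I_n(\varepsilon+ld^{1/2}/N,\,l/N)$, whose dual objective $\widetilde F$ differs from $F$ only by the penalty $\sum_k\frac{l}{N}|\alpha_k|$ plus $\gamma\cdot ld^{3/2}/N$, and evaluates both duals at the same near-optimal bounded multipliers. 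Your step (i) — converting an $l/N$-martingale on the grid into an exact martingale at Wasserstein cost $O(d\,l/N)$ \emph{while keeping the support inside $\Omega$} — is asserted, not proved, and as sketched it fails at the boundary: the natural correction $Y_{k+1}=X_{k+1}+(Y_k-\EE[X_{k+1}\mid\calF_k])$ does make $(Y_k)$ a true martingale with $|Y_k-X_k|\le (k-1)l/N$, but when $X_{k+1}$ sits at an endpoint of $\Omega_{k+1}$ the shifted value exits $\Omega_{k+1}$, and then $\pi\notin\calM(\Omega)$ so it is not feasible for $I_n(\cdot)$. Assumption~\ref{aspn.disperse} gives slack \emph{between consecutive supports} ($\Omega_k+[-\tau,\tau]\subseteq\Omega_{k+1}$), not slack at the outer boundary of $\Omega_{k+1}$ itself, so it does not by itself rescue this step; you would need an additional device (e.g.\ a preliminary affine contraction of $\pi^{(N)}$ about a common center, which preserves martingality and creates an $O(dl/N)$ margin at a cost controlled by $l_1$). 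Your step (ii) also needs a reference martingale at distance at most $\varepsilon/2$ (not merely $\varepsilon$) from $\Pi(\widehat\bmu^n)$ for the convex-combination contraction to lose only a factor $1-O(1/(\varepsilon N))$, which means invoking Lemma~\ref{lemma.feasibility} at radius $\varepsilon/2$. In short: the primal strategy is salvageable but requires a genuinely new boundary-preserving correction lemma that neither you nor the paper supplies; the paper's dual route sidesteps the issue entirely.
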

The proof of Theorem \ref{thm.discretize_approximation} is deferred to Section \ref{sect.proof_thm_discrete_approx}. Finally, by combining Theorem \ref{thm.distance_n} and \ref{thm.discretize_approximation}, we can use $I_{n, N}(\varepsilon,\delta)$ to approximate $I(\varepsilon)$. To be clear, for $n$ sufficiently large, if $N = n^{1/2}$, with high probability we have 
\begin{equation*}
\left|  I_{n,N}\left(\varepsilon+
\frac{ld^{1/2}}{N}, \frac{l}{N}\right)  - I(\varepsilon)\right|  \lesssim \frac{1}{\varepsilon n^{1/2}}.
\end{equation*}
Here the notation $a_n\lesssim b_n$ means $a_n \leq C b_n$ for some positive constant $C$ the does not depend on the sample size $n$.

\section{Distributionally Robust Martingale Optimal Transport: Non Compact Domains}\label{sect.DRMOT_non_compact}

In this section, we formulate and solve the $\varepsilon$-DRMOT when the marginal distribution may not have compact support. Consider $d$ probability measures $\mu_1,\cdots,\mu_d\in\calP(\RR)$, in addition to Assumption \ref{aspn.convexorder}, we assume further that:
\begin{assumption}\label{aspn.moment}
There exists a constant $\gamma>0$, such that $\sup_{1\leq i\leq d}\EE_{\mu_i}[e^{\gamma|X|^2}] < \infty$. Moreover, $\EE_{\mu_i}X=0$ for all $1\leq i\leq d$. 
\end{assumption}
We remark that the square-exponential moment condition can be achieved by a wide range of probability measures (e.g., Sub-Gaussian measure). Meanwhile, the mean zero assumption is mostly for technical convenience. 

For each $\pi\in\calP(\RR^d)$, let $(X_1,\cdots,X_d)$ be the random process which the underlying law is $\pi$.  Be consistent with the previous discussion, let $\pi_i$ denote the $i$-th marginal distribution of $\pi$, i.e., $X_i\sim\pi_i$ ($1\leq i\leq d$). In particular, let $\calM_0(\RR^d)$ denote the set of martingale measures on $\RR^d$ that centered at zero (i.e., expectation of each marginals are zero). 

Now, we consider a set of distribution $\calMM_{\boldsymbol{\mu}}(\varepsilon)$ which is defined by
\begin{equation}\label{eq.uncertaintyset}
\calMM_{\boldsymbol{\mu}}(\varepsilon) := \left\lbrace \pi\in\calM_0(\RR^d):  d\left(\pi, \Pi(\boldsymbol{\mu})\right)\leq\varepsilon \right\rbrace,
\end{equation}
where $\varepsilon >0$ is prefixed. Compare to the definition in \eqref{def.uncertainty_set}, we allow the candidate measure $\pi\in\calMM_{\bmu}(\varepsilon)$ to support on $\RR^d$. For notational convenience, without further specify the marginals, we use $\calMM(\varepsilon)$ to denote $\calMM_{\bmu}(\varepsilon)$ throughout the rest of the paper. Given the above formulation, we aim to compute the following distributionally robust worst case expectation, which is a slight modification of the problem \eqref{eq.worstcase}:
\begin{equation}\label{eq.worstcase_generalSupport}
\bI(\varepsilon) := 
\sup_{\pi\in\calMM(\varepsilon)}\EE_{\pi}\left[f(\bX)\right],
\end{equation}
Note that \eqref{eq.worstcase_generalSupport} is natural extension to \eqref{eq.worstcase}. To estimate the worst case expectation in \eqref{eq.worstcase_generalSupport}, the key step is to find a good compactification of  $\calMM(\varepsilon)$, then we can leverage the tools that we developed in Section \ref{sect.DRMOT_compact}, where the support of each marginals are assumed to be compact. For any compact intervals $\Lambda_i$ $(1\leq i\leq d)$, let $\Lambda$ denote $ \bigtimes_{i=1}^d\Lambda_i$. Then, we define
\begin{equation*}
\calMM^{\Lambda}(\varepsilon):= \calMM(\varepsilon)\cap\calP(\Lambda),
\end{equation*}
The corresponding distributionally robust worst case expectation is 
\begin{equation}\label{eq.worstcase_compactSupport}
\bI^{\Lambda}(\varepsilon) := 
\sup_{\pi\in\calMM^{\Lambda}(\varepsilon)}\EE_{ \pi}\left[f(\bX)\right].
\end{equation}
We will show that the problem in \eqref{eq.worstcase_generalSupport} can be approximate by the problem in  \eqref{eq.worstcase_compactSupport}, by a proper choice of compact domain $\Lambda$. Hence, we only need to consider the case that each marginal of the measures in the uncertainty set has compact support. Such approximation allows us to leverage the theory that we built in Section \ref{sect.DRMOT_compact}.
In order to connect  \eqref{eq.worstcase_generalSupport} and \eqref{eq.worstcase_compactSupport}, we introduce the following lemma.
\begin{lemma}\label{lemma.skorokhod_embedding}
Let $X$ be a random variable satisfying $\EE X = 0$, $\EE |X| \geq c >0$ and  $\EE \left[e^{t|X|^2}\right] \leq C$ for some $t >0$ and $C>0$. By Skorokhod's embedding, there exists a stopping time $T$ such that $B_{T}\stackrel{d}{=}X$, where $(B_t)_{t\geq 0}$ is a standard Brownian motion. Under the same Brownian motion, given $\delta>0$, there exists a stopping time $T'$ and $k:= k(\delta, t, c, C)>0$, such that $|B_{T'}|\leq |B_{T}|\wedge k$ and $\EE|B_T - B_{T'}|\leq \delta$.
\end{lemma}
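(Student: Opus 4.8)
The plan is to use Skorokhod embedding twice: once to represent $X$ as $B_T$ for a stopping time $T$, and then to build a ``truncated'' stopping time $T'$ that keeps the Brownian path from wandering too far while only slightly changing the terminal value in $L^1$. The natural candidate is to stop at the minimum of $T$ and the exit time of the interval $[-k,k]$, namely $T' := T \wedge \tau_k$ where $\tau_k := \inf\{t\ge 0 : |B_t| \ge k\}$. With this choice one automatically has $|B_{T'}| \le |B_T|$ (since $T' \le T$ and $|B_{T'}|$ is either $|B_T|$ on $\{T \le \tau_k\}$ or exactly $k$ on $\{T > \tau_k\}$, where in the latter case $|B_T|$ need not exceed $k$ --- I will need to be slightly careful here and take $T'$ to be $\tau_k$ restricted appropriately, or note that on the event $\{\tau_k < T\}$ we have $|B_T|$ could be anything, so I instead define $T' = T$ on $\{\tau_k \ge T\}$ and $T' = \inf\{t \ge \tau_k : |B_t| \le k\}\wedge T = \tau_k$ on $\{\tau_k < T\}$, which gives $|B_{T'}| = k \le |B_T| \vee k$; to get the clean bound $|B_{T'}| \le |B_T| \wedge k$ one should instead take $T'$ to be the last time before $T$ that the path is at level $k$ in absolute value, but the simplest fix is to replace the claim by $|B_{T'}| \le k$ and $|B_{T'}| \le |B_T|$ separately, which is what the downstream compactification actually needs). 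I will present it with $T' = T \wedge \tau_k$ and verify $|B_{T'}| \le |B_T| \wedge k$ holds on $\{T \le \tau_k\}$ trivially, and on $\{T > \tau_k\}$ argue by the strong Markov property that one can further wait until the path returns to $[-k,k]$ before time $T$ --- but since this adds technical overhead, the cleanest route is simply: on $\{\tau_k < T\}$ redefine $T'$ to be the first time after $\tau_k$ that $|B| = 0$ or we reach $T$; this keeps $|B_{T'}| \le k$. I will pick whichever formulation makes the two stated inequalities literally true and move on.

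The substantive estimate is $\EE|B_T - B_{T'}| \le \delta$. First I would bound $\EE|B_T - B_{T'}| = \EE\big[|B_T - B_{T'}|\,\mathbbm{1}(\tau_k < T)\big] \le \EE\big[(|B_T| + k)\mathbbm{1}(\tau_k < T)\big]$. By Cauchy--Schwarz this is at most $\EE\big[(|B_T|+k)^2\big]^{1/2}\,\PP(\tau_k < T)^{1/2}$, and $\EE[(|B_T|+k)^2] \le 2\EE|B_T|^2 + 2k^2 = 2\EE X^2 + 2k^2$, which is finite (indeed bounded in terms of $t,C$) by the square-exponential moment assumption. So it remains to control $\PP(\tau_k < T)$. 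On $\{\tau_k < T\}$ the path reaches level $k$ before stopping, so $\sup_{s \le T}|B_s| \ge k$. I would then use the optional stopping / maximal inequality bound: $\PP(\sup_{s\le T}|B_s| \ge k)$ can be estimated through $T$ itself via $\EE[T] = \EE[X^2] < \infty$ together with the reflection-type inequality $\PP(\sup_{s \le T}|B_s| \ge k) \le \PP(\sup_{s\le m}|B_s|\ge k) + \PP(T > m)$ for a free parameter $m$; the first term is $O(\sqrt{m}\,e^{-k^2/2m})$ by the reflection principle and the second is $\le \EE[T]/m$ by Markov. Optimizing (or just taking $m = \sqrt{k}$, say) shows $\PP(\tau_k < T) \to 0$ as $k \to \infty$, with a rate that only depends on $\EE X^2$, hence on $c, C, t$. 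Then choosing $k = k(\delta, t, c, C)$ large enough makes the product $\le \delta$. The role of $\EE|X| \ge c > 0$ is essentially to guarantee $T$ is not degenerate at $0$ (so the embedding is nontrivial) and to let the threshold $k$ be expressed as a function of $c$ in a form usable by the caller; it is not really needed for the inequality itself.

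The main obstacle I anticipate is not any single estimate but getting the definition of $T'$ exactly right so that $|B_{T'}| \le |B_T| \wedge k$ holds pointwise while $T'$ remains a genuine stopping time and the $L^1$ gap is still controlled --- the naive $T \wedge \tau_k$ gives $|B_{T'}| \le |B_T| \vee k$ rather than $\wedge$. The resolution is to note that on $\{\tau_k < T\}$ we want the smaller of the two terminal values, so $T'$ should be taken as the \emph{first} time at or after $\tau_k$ at which $|B|$ drops back to $\min(k, |B_T|)$; since Brownian motion started at $\pm k$ immediately enters $(-k,k)$ with probability one, such a time before $T$ exists a.s.\ on a set we control, and the excess probability goes into the same $\PP(\tau_k < T)$ bucket. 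Once the definition is pinned down, everything else is the Cauchy--Schwarz plus maximal-inequality computation sketched above. I would write the argument with the clean definition and relegate the measurability check of $T'$ to a one-line remark using the strong Markov property at $\tau_k$.
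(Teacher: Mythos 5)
There is a genuine gap: you never actually produce a stopping time $T'$ for which $|B_{T'}|\le |B_T|\wedge k$ holds, and every candidate you float fails one of the two requirements. The naive $T'=T\wedge\tau_k$ gives $|B_{T'}|=k$ on $\{\tau_k<T\}$, which can exceed $|B_T|$, as you note. Your fix ``continue after $\tau_k$ until $|B|=0$ or until $T$'' fails the bound $|B_{T'}|\le k$: if $T$ occurs before the path returns to $0$, then $B_{T'}=B_T$ and $|B_T|$ is unbounded. ``The last time before $T$ at level $k$'' is not a stopping time, and neither is ``the first time $|B|$ drops back to $\min(k,|B_T|)$,'' since $B_T$ is not known at any time prior to $T$. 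The proposed weakening to ``$|B_{T'}|\le k$ and $|B_{T'}|\le|B_T|$ separately'' is not a weakening at all --- it is exactly $|B_{T'}|\le|B_T|\wedge k$. So the central construction is missing, and the subsequent Cauchy--Schwarz/maximal-inequality estimate has nothing to attach to.

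The paper resolves this by exploiting the \emph{specific} randomized Skorokhod embedding rather than trying to truncate an arbitrary $T$ pathwise: $T$ is realized as the exit time of a random interval $(U,V)$, with $(U,V)$ drawn at time $0$ independently of the Brownian motion from the density $\frac{2}{\EE|X|}(v-u)\,dF(u)\,dF(v)$ on $\{u\le 0<v\}$. Since $(U,V)$ is known at time $0$, one may legitimately replace the target interval by a smaller symmetric one depending only on $(U,V)$: keep $(U,V)$ if both $|U|,|V|<k$, and otherwise use $(U,-U)$, $(-V,V)$, or $(-k,k)$ as appropriate. This $T'$ is manifestly a stopping time, the exit value from the shrunken symmetric interval satisfies $|B_{T'}|\le\min(|U|,|V|)\wedge k\le|B_T|\wedge k$ automatically (because $B_T\in\{U,V\}$), and the stopped process stays in $[-k,k]$ (which the downstream Lemma~\ref{lemma.reduce_to_compact} needs for optional stopping). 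The $L^1$ error is then bounded by $V-U$ on the event $\{U\le-k\}\cup\{V\ge k\}$ and estimated by integrating against the explicit density using the square-exponential moment; this is also where the hypothesis $\EE|X|\ge c$ enters through the normalizing factor $2/\EE|X|$, so your remark that this hypothesis ``is not really needed'' does not survive in the argument that actually works.
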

By Lemma \ref{lemma.skorokhod_embedding}, for any random variable $X$, we are able to construct a new bounded random variable $X'$, such that both $X$ and $X'$ can be embedded to a same Brownian motion, and their $L_1$ distance can be arbitrarily small. 

Observe that if for any $\pi\in\calMM(\varepsilon)$, we can find a $\widehat\pi\in\calMM^{\Lambda}(\varepsilon')$ (for some compact domain $\Lambda$ and $\varepsilon'\approx\varepsilon$), such that $\pi$ and $\widehat\pi$ are close under Wasserstein distance. Then, the object value in \eqref{eq.worstcase_generalSupport} and \eqref{eq.worstcase_compactSupport} should be close to each other. Now, for any martingale $(X_1,\cdots, X_d)\sim\pi$, we can first embed this martingale to a Brownian motion via an increasing sequence of stopping times. Next,  we use the Lemma \ref{lemma.skorokhod_embedding} to construct another increasing sequence of stopping times, such that the resulting process under the same Brownian motion has compact support, and its underling measure is close to $\pi$ under Wasserstein distance. Following this, we obtain the next lemma.

\begin{lemma}\label{lemma.reduce_to_compact}
Let Assumption \ref{aspn.convexorder} and \ref{aspn.moment} hold. Given $\varepsilon > 0$,  there exists $C'$ \emph{(}depends on $\bmu$ and $\gamma$\emph{)} so that for each $\delta>0$, the compact domain $\Lambda^{\delta} :=\bigtimes_{i=1}^d \left[-iC'\sqrt{\log(1/\delta)}, iC'\sqrt{\log(1/\delta)}\right]$ has the following property: 
For any $\pi\in\calMM(\varepsilon)$, there exists a probability measure $\widehat{\pi}\in \calMM^{\Lambda^{\delta}}(\varepsilon+\delta)$ satisfying  $\calW\left(\pi,\widehat{\pi}\right)\leq\delta$.
\end{lemma}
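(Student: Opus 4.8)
The plan is to represent an arbitrary $\pi\in\calMM(\varepsilon)$ as the values of a single Brownian motion sampled along an increasing family of stopping times, then to ``pull in'' those stopping times so that the sampled values land in a fixed box while the law is perturbed by at most $\delta$ in Wasserstein distance; the resulting law is the desired $\widehat\pi$. First I would fix $\pi\in\calMM(\varepsilon)$ and let $(X_1,\dots,X_d)\sim\pi$ be the associated mean-zero martingale. By the iterated Skorokhod embedding theorem for integrable martingales, after enlarging the probability space there are a standard Brownian motion $(B_t)_{t\ge0}$ and stopping times $0=T_0\le T_1\le\dots\le T_d$ in the Brownian filtration with $(B_{T_1},\dots,B_{T_d})\stackrel{d}{=}(X_1,\dots,X_d)$, the embedding chosen (a UI embedding) so that each $B_{\cdot\wedge T_i}$ is uniformly integrable.

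Next I would apply Lemma~\ref{lemma.skorokhod_embedding} successively for $i=1,\dots,d$, each time at accuracy $\delta/d$, to produce stopping times $T_i'$ under the same $B$ with $|B_{T_i'}|\le |B_{T_i}|\wedge k_i$ and $\EE\,|B_{T_i}-B_{T_i'}|\le\delta/d$; the truncation level $k_i$ delivered by the lemma depends only on $\delta$, $\gamma$, a lower bound for $\EE_{\mu_i}|X|$ and an upper bound for $\EE_{\mu_i}[e^{\gamma X^2}]$, hence only on $\bmu,\gamma,\varepsilon,\delta$. Choosing the levels $k_i$ nondecreasing in $i$ and taking the stage-$i$ truncation of the form ``stop at $T_i$, or when $|B|$ first reaches $k_i$, whichever is first'' makes $T_1'\le\dots\le T_d'$ automatic; absorbing the growth of $k_i$ in $i$ into the constant yields $k_i\le iC'\sqrt{\log(1/\delta)}$ with $C'=C'(\bmu,\gamma,\varepsilon)$. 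I would then set $\widehat\pi$ to be the law of $(B_{T_1'},\dots,B_{T_d'})$.

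It then remains to verify $\widehat\pi\in\calMM^{\Lambda^\delta}(\varepsilon+\delta)$ and $\calW(\pi,\widehat\pi)\le\delta$, and each point follows formally from the construction. The bound $|B_{T_i'}|\le k_i\le iC'\sqrt{\log(1/\delta)}$ gives $\widehat\pi\in\calP(\Lambda^\delta)$. Since $(B_{T_i'})_{i=1}^d$ is a bounded process sampled along an increasing family of stopping times, optional stopping shows it is a martingale with $\EE B_{T_1'}=0$, so $\widehat\pi\in\calM_0(\RR^d)$. Coupling $\pi$ and $\widehat\pi$ through the common Brownian motion and using the $\ell_1$ cost in the definition of $\calW$,
\[
\calW(\pi,\widehat\pi)\;\le\;\EE\sum_{i=1}^{d}|B_{T_i}-B_{T_i'}|\;\le\;\sum_{i=1}^{d}\frac{\delta}{d}\;=\;\delta .
\]
Finally, picking $\pi^\flat\in\Pi(\bmu)$ with $\calW(\pi,\pi^\flat)=d(\pi,\Pi(\bmu))\le\varepsilon$ (such $\pi^\flat$ exists since $\Pi(\bmu)$ is closed and convex), the triangle inequality for the metric $\calW$ gives $d(\widehat\pi,\Pi(\bmu))\le\calW(\widehat\pi,\pi^\flat)\le\calW(\widehat\pi,\pi)+\calW(\pi,\pi^\flat)\le\varepsilon+\delta$, which is exactly $\widehat\pi\in\calMM^{\Lambda^\delta}(\varepsilon+\delta)$.

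\textbf{Main obstacle.} Everything in the last paragraph is routine once the stopping times $T_i'$ are in hand; the real content is the simultaneous demands that (a) $T_1'\le\dots\le T_d'$, so that $\widehat\pi$ is again a martingale, (b) $|B_{T_i'}|\le k_i$ with $k_i$ depending on $\bmu,\gamma,\varepsilon,\delta$ but not on $\pi$, and (c) $\EE\,|B_{T_i}-B_{T_i'}|\le\delta/d$ uniformly over $\pi\in\calMM(\varepsilon)$. Item (c) is precisely the single-marginal assertion of Lemma~\ref{lemma.skorokhod_embedding}, and its proof is where the square-exponential moment enters: at the level-crossing time one controls the residual $|B_{T_i}-B_{\sigma_{k_i}}|$ on the event that the level is hit before $T_i$ through the strong Markov property together with a maximal (Doob/BDG) estimate for $B_{\cdot\wedge T_i}$, and the resulting bound decays with the tail of $B_{T_i}\sim\pi_i$. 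The delicate part I expect to fight with is making this tail control uniform over the whole Wasserstein ball, i.e.\ transferring the square-exponential control from $\mu_i$ to every $\pi_i$ with $\calW(\pi_i,\mu_i)\le\varepsilon$ — this is what forces the radius to grow like $\sqrt{\log(1/\delta)}$ and the linear-in-$i$ factor to absorb the growth of the embedding clocks (equivalently, of the variances along $\mu_1\preceq_c\cdots\preceq_c\mu_d$) — while reconciling the truncation with monotonicity of the clocks.
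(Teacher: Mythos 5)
Your overall strategy (iterated Skorokhod embedding, truncation of the stopping times, coupling through the common Brownian motion, then the triangle inequality) is the paper's strategy, and your final verification paragraph matches the paper's almost verbatim. The gap is in the middle step, where you produce the truncated clocks $T_i'$. You propose to apply Lemma \ref{lemma.skorokhod_embedding} marginal-by-marginal, to $X_i$ and $T_i$. But the lemma's stopping time $T'$ is constructed specifically for the randomized two-point exit embedding $T=\inf\{t:B_t\notin(U,V)\}$ of a single centered variable started from $B_0=0$; for $i\ge 2$ the iterated clock $T_i=T_1+\tau_2+\cdots+\tau_i$ is not such an embedding of $X_i$, so the lemma does not deliver a $T_i'$ with $|B_{T_i'}|\le|B_{T_i}|\wedge k_i$ and $\EE|B_{T_i}-B_{T_i'}|\le\delta/d$. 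Your fallback, $T_i'=T_i\wedge\sigma_{k_i}$ with $\sigma_{k_i}$ the first hitting time of level $k_i$, does give monotone clocks and the support bound, but the crucial estimate $\EE|B_{T_i}-B_{T_i'}|\le\delta/d$ then becomes a statement about the maximal function $\sup_{t\le T_i}|B_t|$ that is not contained in Lemma \ref{lemma.skorokhod_embedding} and is left unproved. Moreover, the uniformity you flag as the ``delicate part'' is genuinely unavailable on this route: $\calW(\pi_i,\mu_i)\le\varepsilon$ only forces the tail mass $\pi_i(\{x:|x|>M\})$ to be of order $\varepsilon/M$, so the square-exponential moment of $\mu_i$ does \emph{not} transfer to $\pi_i$, and no truncation radius of order $\sqrt{\log(1/\delta)}$ depending only on $(\bmu,\gamma,\varepsilon)$ can be extracted from the marginals alone.

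The paper avoids the marginal route entirely: it applies Lemma \ref{lemma.skorokhod_embedding} to each conditional increment law $\mu_k(X_1,\dots,X_k;\cdot)$ of $X_{k+1}-X_k$, for which the increment clock $\tau_{k+1}$ is, by construction and the strong Markov property, exactly the two-point exit embedding the lemma requires. Setting $T_{k+1}'=T_k'+\tau_{k+1}'$ makes monotonicity automatic, the box $|B_{T_k'}|\le kC$ follows by summing the $k$ increment truncations (this, not the growth of the variances of the $\mu_i$, is why the radius grows linearly in $i$), and the per-increment error $\delta/d^2$ telescopes to at most $\delta/d$ per marginal and $\delta$ in total. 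To complete your argument you should switch to this increment-wise application of the lemma; as written, the construction of the $T_i'$ and the bound $\EE|B_{T_i}-B_{T_i'}|\le\delta/d$ do not go through.
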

The proof of Lemma \ref{lemma.skorokhod_embedding} and \ref{lemma.reduce_to_compact} are deferred to Section \ref{proof.skorokhod_embedding}. As a direct consequence of Lemma \ref{lemma.reduce_to_compact}, we get the following feasibility result. 
\begin{corollary}\label{cor.feasibility_reduce_to_compact}
Let Assumptions \ref{aspn.convexorder} and \ref{aspn.moment} hold. Then, for any $\varepsilon > 0$, we have $\bI^{\Lambda^{\delta}}(\varepsilon)$ is feasible for $\delta\in(0,\varepsilon)$, where $\Lambda^{\delta}$ is constructed in Lemma \ref{lemma.reduce_to_compact}. 
\end{corollary}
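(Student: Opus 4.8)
The plan is to exhibit a single feasible point for $\bI^{\Lambda^{\delta}}(\varepsilon)$, i.e.\ to show $\calMM^{\Lambda^{\delta}}(\varepsilon)\ne\varnothing$, by taking an exact martingale coupling of $\bmu$ and transporting it into the compact box $\Lambda^{\delta}$ by means of Lemma \ref{lemma.reduce_to_compact}. The only care needed is in choosing the radius that is fed into that lemma, so that its $+\delta$ inflation does not overshoot $\varepsilon$.

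First I would invoke Assumption \ref{aspn.convexorder} together with Strassen's theorem (recalled immediately after Assumption \ref{aspn.convexorder}) to obtain a martingale measure $\pi^{\star}\in\Pi(\bmu)$. Since Assumption \ref{aspn.moment} gives $\EE_{\mu_i}X=0$ for every $i$, each marginal of $\pi^{\star}$ is centered, hence $\pi^{\star}\in\calM_0(\RR^d)$; and as $\pi^{\star}\in\Pi(\bmu)$ we trivially have $d\left(\pi^{\star},\Pi(\bmu)\right)=0$. Consequently $\pi^{\star}\in\calMM(\varepsilon')$ for every $\varepsilon'\ge 0$. In particular, because $\delta\in(0,\varepsilon)$ we have $\varepsilon-\delta>0$ and therefore $\pi^{\star}\in\calMM(\varepsilon-\delta)$.

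The key step is then to apply Lemma \ref{lemma.reduce_to_compact} to $\pi^{\star}$ with the shrunk radius $\varepsilon-\delta$ in place of $\varepsilon$. Since the compact box produced by that lemma depends only on $\bmu$, $\gamma$ and $\delta$ (its constant $C'$ is declared to depend only on $\bmu$ and $\gamma$), it is the very same $\Lambda^{\delta}$ referenced in the corollary, and the lemma yields a probability measure $\widehat{\pi}\in\calMM^{\Lambda^{\delta}}\!\big((\varepsilon-\delta)+\delta\big)=\calMM^{\Lambda^{\delta}}(\varepsilon)$ with $\calW\left(\pi^{\star},\widehat{\pi}\right)\le\delta$. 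Thus $\calMM^{\Lambda^{\delta}}(\varepsilon)=\calMM(\varepsilon)\cap\calP(\Lambda^{\delta})$ is nonempty, i.e.\ $\bI^{\Lambda^{\delta}}(\varepsilon)$ is feasible.

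I do not expect a real obstacle: the argument is essentially bookkeeping on the relaxation radii, the crucial move being that $\delta<\varepsilon$ lets us hand $\varepsilon-\delta$ to Lemma \ref{lemma.reduce_to_compact} so the $+\delta$ blow-up lands back exactly at $\varepsilon$ rather than at $\varepsilon+\delta$ (which would only show feasibility of $\bI^{\Lambda^{\delta}}(\varepsilon+\delta)$, a weaker statement). The one point to verify carefully is that the box $\Lambda^{\delta}$ is unchanged when the radius is decreased from $\varepsilon$ to $\varepsilon-\delta$, so that it coincides with the $\Lambda^{\delta}$ in the corollary's statement; this follows from the stated dependence of $C'$ on $\bmu$ and $\gamma$ only.
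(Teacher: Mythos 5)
Your proposal is correct and follows essentially the same route as the paper: take an exact martingale coupling $\pi^{\star}\in\Pi(\bmu)$ (Strassen) and push it into $\Lambda^{\delta}$ via Lemma \ref{lemma.reduce_to_compact}. The only cosmetic difference is that the paper feeds radius $0$ into the lemma and concludes $\widehat{\pi}\in\calMM^{\Lambda^{\delta}}(\delta)\subset\calMM^{\Lambda^{\delta}}(\varepsilon)$, while you feed in $\varepsilon-\delta$ so the $+\delta$ inflation lands exactly at $\varepsilon$; both arguments are valid.
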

\begin{proof}[Proof of Corollary \ref{cor.feasibility_reduce_to_compact}]
Based on Assumptions \ref{aspn.convexorder} and \ref{aspn.moment}, there exists a martingale measure $\pi\in\calMM(0)$. Thus, for any $\delta \in (0, \varepsilon)$, we can apply Lemma \ref{lemma.reduce_to_compact} to construct a compact domain $\Lambda^{\delta}$, such that there exists a probability measure $\pi'\in \calMM^{\Lambda^{\delta}}(\delta)\subset \calMM^{\Lambda^{\delta}}(\varepsilon)$.
\end{proof}

Similar to Theorem \ref{thm.strongdual}, we establish the following strong duality result for the optimization problem  \eqref{eq.worstcase_compactSupport}.

\begin{theorem}\label{thm.strongdual_general} 
Let $\Lambda_1,\cdots, \Lambda_d$ be compact subsets of $\RR$, and  $\boldsymbol{\nu}\in\bigtimes_{i=1}^d\calP(\RR)$ satisfies Assumption \ref{aspn.moment}. Suppose that $\calMM_{\boldsymbol{\nu}}^{\Lambda}(\varepsilon)\ne\varnothing$ for some $\varepsilon>0$. Then, the dual representation of 
\begin{equation*}
\bI_{\boldsymbol{\nu}}^{\Lambda}(\varepsilon) := 
\sup_{\pi\in\calMM_{\boldsymbol{\nu}}^{\Lambda}(\varepsilon)}\EE_{\pi}\left[f(\mathbf{X})\right]
\end{equation*}	
is the following:
\begin{align*}
\bJ_{\boldsymbol{\nu}}^{\Lambda}(\varepsilon)  := \emph{inf} \quad &  \gamma\varepsilon +  \sum_{k=1}^d\int_{\RR}\beta_k(x_k)\nu_k(dx_k)\\
\emph{subject to} \quad & \gamma \in\RR_{\geq 0},\quad \eta\in\RR, \quad \boldsymbol{\alpha}\in\bigtimes_{i=1}^{d-1}C(\Lambda_{1:i}),\quad \boldsymbol{\beta}\in\bigtimes_{i=1}^d C_b(\RR); \\
& \widetilde{H}(\gamma, \eta, \boldsymbol{\alpha}, \boldsymbol{\beta})(\mathbf{x}, \mathbf{x}')\leq 0, \quad \emph{for all } \mathbf{x}\in\RR^d, \textrm{ }\mathbf{x}' \in \Lambda=\bigtimes_{i=1}^d\Lambda_i.
\end{align*} 
Here $\boldsymbol{\alpha}= (\alpha_1,\cdots,\alpha_{d-1})$, where $\alpha_{i}(\cdot)\in C\left(\Lambda
_{1:i}\right)$ for $1\leq i\leq d$. $\boldsymbol{\beta} = (\beta_1,\cdots,\beta_d)$, where $\beta_i(\cdot)\in C_b(\RR)$ for $1\leq i\leq d$. The function $\widetilde H$ is defined by
\begin{align*}
    \widetilde H(\gamma, \eta, \boldsymbol{\alpha},\boldsymbol{\beta})(\mathbf{x},\mathbf{x}')= &f(\mathbf{x}') + \sum_{k=1}^{d-1}\alpha_k(\mathbf{x}_{1:k}')(x_{k+1}' - x_k')-\gamma\|\bx - \bx'\|_1 - \sum_{k=1}^{d}\beta_k(x_k)  + \eta x_1',
\end{align*}
where $\mathbf{x} = (x_1,\cdots, x_d), \mathbf{x}'=(x_1',\cdots, x_d')$.
Moreover, we have the strong duality holds
\[
\bI_{\boldsymbol{\nu}}^{\Lambda}(\varepsilon) = \bJ_{\boldsymbol{\nu}}^{\Lambda}(\varepsilon).
\]
and there exist a primal optimizer $\pi^{\emph{DRO}}\in\calMM_{\boldsymbol{\nu}}^{\Lambda}(\varepsilon)$ for $\bI_{\boldsymbol{\nu}}^{\Lambda}(\varepsilon)$. 
\end{theorem}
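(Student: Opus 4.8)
The plan is to mimic the proof of Theorem \ref{thm.strongdual} (the compact case), with the extra bookkeeping needed because the $\beta_k$ now range over $C_b(\RR)$ rather than $C(\Omega_k)$, and because the measures $\pi\in\calMM_{\boldsymbol{\nu}}^{\Lambda}(\varepsilon)$ are supported on the compact set $\Lambda$ while the $\nu_k$ need not be. First I would reformulate $\bI_{\boldsymbol{\nu}}^{\Lambda}(\varepsilon)$ as a linear program over measures. Introduce a coupling variable $Q\in\calP(\RR^d\times\Lambda)$, whose first marginal $q$ (on $\RR^d$) carries the marginal-matching data and whose second marginal $\pi$ (on $\Lambda$) is the martingale we optimize over; the transport cost between them is $\EE_Q\|\bx-\bx'\|_1\le\varepsilon$. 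The mean-zero constraint on $\pi$ (which is the reason the dual has the extra scalar $\eta$ multiplying $x_1'$) is imposed as $\EE_\pi[X_1']=0$; note that the martingale property then forces all later marginals to be mean zero too, so a single scalar multiplier suffices. The constraints are: (i) $Q$ is a probability measure, (ii) the $\RR^d$-marginal of $Q$ has $k$-th marginal equal to $\nu_k$ for each $k$ (dual variables $\beta_k\in C_b(\RR)$), (iii) $\EE_Q\|\bx-\bx'\|_1\le\varepsilon$ (dual variable $\gamma\ge0$), (iv) the $\Lambda$-marginal $\pi$ satisfies the martingale conditions $\EE_\pi[(X_{k+1}'-X_k')\,g(\bx_{1:k}')]=0$ for all $g\in C(\Lambda_{1:k})$ (dual variables $\alpha_k\in C(\Lambda_{1:k})$), and (v) $\EE_\pi[X_1']=0$ (dual variable $\eta\in\RR$). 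Writing the Lagrangian and taking the inf over $Q\ge0$ produces exactly the pointwise constraint $\widetilde H(\gamma,\eta,\boldsymbol{\alpha},\boldsymbol{\beta})(\bx,\bx')\le0$ on $\RR^d\times\Lambda$, and the objective $\gamma\varepsilon+\sum_k\int\beta_k\,d\nu_k$; thus weak duality $\bI^\Lambda_{\boldsymbol{\nu}}(\varepsilon)\le\bJ^\Lambda_{\boldsymbol{\nu}}(\varepsilon)$ is immediate.

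The substantive part is strong duality and primal attainment. For this I would apply an infinite-dimensional minimax / Sion-type argument, or equivalently a Fenchel--Rockafellar duality in the pairing between finite signed measures and bounded continuous functions, exactly as in the proof of Theorem \ref{thm.strongdual}. The key compactness inputs are: $\Lambda$ is compact, so $\calP(\Lambda)$ is weak-$*$ compact, and the feasible set $\calMM^{\Lambda}_{\boldsymbol{\nu}}(\varepsilon)$ is weak-$*$ closed (the martingale constraints, the mean-zero constraint, and $d(\pi,\Pi(\boldsymbol{\nu}))\le\varepsilon$ all pass to weak-$*$ limits — the last because the Wasserstein-distance lower-semicontinuity plus tightness coming from the moment bound in Assumption \ref{aspn.moment} controls the $\RR^d$-side). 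Nonemptiness is assumed. Hence $\calMM^{\Lambda}_{\boldsymbol{\nu}}(\varepsilon)$ is weak-$*$ compact and $\pi\mapsto\EE_\pi[f(\bX)]$ is continuous on it (since $f$ is continuous and $\Lambda$ compact), giving primal attainment $\pi^{\textup{DRO}}$. For strong duality one checks a Slater-type condition: because $\calMM^{\Lambda}_{\boldsymbol{\nu}}(\varepsilon)\ne\varnothing$ for the given $\varepsilon$, the transport-cost constraint can be taken with slack on a slightly enlarged radius, so no duality gap arises; this is where I would invoke the same abstract LP-duality lemma used in Section \ref{sect.proof_strong_dual}.

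The main obstacle I anticipate is the same one that makes Theorem \ref{thm.strongdual} nontrivial: handling the interaction between the (infinite-dimensional) martingale multipliers $\alpha_k$ and the transport multiplier $\gamma$ so that the dual feasible functions can be taken \emph{continuous} (and here the $\beta_k$ \emph{bounded} continuous) rather than merely measurable, and ensuring the inf in the dual is over the right function classes. Concretely, after obtaining an abstract dual with measurable multipliers, one must run a regularization/mollification argument on $\Lambda_{1:k}$ for the $\alpha_k$, and for the $\beta_k$ one must check that replacing $\beta_k$ by its inf-convolution $\widetilde\beta_k(x)=\inf_{y}\{\beta_k(y)+\gamma|x-y|\}$ — which is automatically bounded and $\gamma$-Lipschitz, hence in $C_b(\RR)$ — does not change the objective (using the marginal constraint $\pi_k$ supported in $\Lambda_k$ compact, while $\nu_k$ may have full support, so one needs the exponential moment bound to argue $\int\widetilde\beta_k\,d\nu_k$ is finite and that the infimum is unaffected up to an arbitrarily small error). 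The extra term $\eta x_1'$ is a minor addition: since $x_1'$ is a bounded continuous function on $\Lambda_1$, the mean-zero constraint is an ordinary linear equality constraint and contributes a single unconstrained scalar multiplier, handled by the same duality framework with no additional difficulty.
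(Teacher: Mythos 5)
Your overall architecture (Lagrangian with multipliers $\gamma,\eta,\boldsymbol{\alpha},\boldsymbol{\beta}$, weak duality from the pointwise constraint $\widetilde H\le 0$, primal attainment from weak-$*$ compactness of $\calMM^{\Lambda}_{\boldsymbol{\nu}}(\varepsilon)$) matches the paper, and your observation that a single scalar $\eta$ suffices for the mean-zero constraint is correct. However, there is a genuine gap in the strong-duality step. You propose to dualize \emph{all} constraints at once — including the marginal constraints $Q_k=\nu_k$ with multipliers $\beta_k$ — and then invoke a Sion-type minimax swap. But once the marginal constraints are dualized, the supremum in the Lagrangian ranges over all of $\calP(\RR^d\times\Lambda)$, which is \emph{not} weak-$*$ compact (the $\RR^d$ side is untethered), so Sion's theorem does not apply; the tightness you cite comes from the marginal/moment constraints you have just removed. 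Your fallback mentions of Fenchel--Rockafellar and a Slater condition do not resolve this: in the pairing with $C_b(\RR^d\times\Lambda)$ the dual space contains finitely additive measures, and you would still owe an argument for countable additivity and for the stated function classes.

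The paper's proof avoids this precisely by \emph{not} dualizing the marginal constraints in the minimax step: it defines $\bPi$ as the set of couplings on $\RR^d\times\Lambda$ whose first $d$ marginals are exactly $\boldsymbol{\nu}$, which is tight (hence weakly compact by Prokhorov) because $\boldsymbol{\nu}$ is fixed and $\Lambda$ is compact. Sion's theorem is applied only to $(\gamma,\eta,\boldsymbol{\alpha})$ against $\bpi\in\bPi$. The multipliers $\beta_k\in C_b(\RR)$ are then produced \emph{after} the swap: the inner supremum over $\bpi\in\bPi$ reduces to $\sup_{\pi\in\Pi(\boldsymbol{\nu})}\int\phi\,d\pi$ for a Lipschitz integrand $\phi(\bx)=\sup_{\bx'\in\Lambda}\{\cdots\}$, and this is converted to an infimum over $\beta_k\in C_b(\RR)$ with $\sum_k\beta_k\ge\phi$ via the multi-marginal Kantorovich duality (Proposition 2.1 of \cite{model_indep_bound_Beiglbock2013}). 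That two-stage device — hard-coding the $\RR^d$-marginals to restore compactness, then recovering the $\beta_k$ from classical optimal-transport duality — is the key idea missing from your proposal; your inf-convolution regularization of $\beta_k$ addresses a different (and here secondary) issue and does not substitute for it.
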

Since we are restricting the support of $\pi\in\calMM_{\bnu}(\varepsilon)$ to a compact domain $\Lambda$, the proof of Theorem \ref{thm.strongdual_general} is essentially the same as what we have in the proof of Theorem \ref{thm.strongdual}. Details are deferred to Section \ref{sect.strong_dual_general_proof}. Moreover, we also obtain the following corollary that is parallel to Corollary. The proof of Corollary \ref{cor.dual_representation_general} is omitted, as it simply repeats the arguments in the proof of Corollary \ref{cor.dual_reformulation}.

\begin{corollary}\label{cor.dual_representation_general}
Let $\Lambda_1,\cdots, \Lambda_d$ be compact subsets of $\RR$ and $\Lambda = \bigtimes_{i=1}^d \Lambda_i$. Let  $\boldsymbol{\nu}\in\bigtimes_{i=1}^d\calP(\RR)$ satisfies Assumption \ref{aspn.moment}, and  $\calMM_{\boldsymbol{\nu}}^{\Lambda}(\varepsilon)\ne\varnothing$ for some $\varepsilon > 0$. Then, 
\begin{align*}
&\bI_{\boldsymbol{\nu}}^{\Lambda}(\varepsilon) = 
\inf_{\left(\gamma, \eta,  \boldsymbol{\alpha}, \boldsymbol{\beta}\right)\in\calS^{\Lambda}_{\boldsymbol{\nu}_{1:d-1}}}\gamma\varepsilon + \int_{\RR}
F\left(x_d; \gamma,\eta, \boldsymbol{\alpha}, \boldsymbol{\beta}\right)\nu_d(dx_d),
\end{align*}	
where 
\begin{equation*}
\begin{aligned}
F\left(x_d; \gamma, \eta, \boldsymbol{\alpha}, \boldsymbol{\beta}\right) : = \sup_{\mbox{$\begin{subarray}{c} \bx_{1:d-1}\in\RR^{d-1},\mathbf{x}'\in\Lambda_{1:d}\end{subarray}$}}\left\lbrace f\left(\mathbf{x}'\right)- \sum_{k=1}^{d-1}\beta_k(x_k)-\gamma\sum_{k=1}^{d}|x_k-x_k'| + \right. \\ 
\left.  \sum_{k=1}^{d-1}\alpha_k(\mathbf{x}'_{1:k})(x_{k+1}'-x_k') + \eta x_1'\right\rbrace,
\end{aligned}
\end{equation*}
and 
\begin{align*}
S^{\Lambda}_{\boldsymbol{\nu}_{1:d-1}} := \left\lbrace \left(\gamma, \eta,  \boldsymbol{\alpha}, \boldsymbol{\beta}\right) : \gamma\geq 0, \eta\in\RR, \alpha_k\in C(\Lambda_{1:k}), \beta_k\in C_b(\RR), \right.\\
\left. \int_{\RR}\beta_k(x)\nu_k(dx) = 0, \emph{ for all }1\leq k\leq d-1 \right\rbrace.
\end{align*} 
\end{corollary}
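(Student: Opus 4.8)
The plan is to repeat the proof of Corollary~\ref{cor.dual_reformulation} essentially verbatim, with Theorem~\ref{thm.strongdual_general} in the role of Theorem~\ref{thm.strongdual}, and to isolate the single place where the non-compactness of the $\bx$-domain $\RR^d$ forces an extra approximation step.

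\textbf{Step 1: eliminate $\beta_d$.} Following the proof of Theorem~\ref{thm.strongdual_general} one obtains, exactly as in \eqref{cor1.keyexpression}, the intermediate identity
\begin{align*}
\bI_{\boldsymbol{\nu}}^{\Lambda}(\varepsilon) \;=\; \inf_{\mbox{$\begin{subarray}{c}\gamma\geq 0,\ \eta\in\RR,\ \alpha_k\in C(\Lambda_{1:k}),\\ \beta_k\in C_b(\RR),\ 1\leq k\leq d-1\end{subarray}$}} \gamma\varepsilon \;+\; \sum_{k=1}^{d-1}\int_{\RR}\beta_k(x)\,\nu_k(dx) \;+\; \int_{\RR}F\!\left(x_d;\gamma,\eta,\boldsymbol{\alpha},\boldsymbol{\beta}\right)\nu_d(dx_d),
\end{align*}
where $F$ is the function in the statement. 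Indeed, the dual constraint $\widetilde H(\gamma,\eta,\boldsymbol{\alpha},\boldsymbol{\beta})(\bx,\bx')\le 0$ for all $\bx\in\RR^d,\ \bx'\in\Lambda$ reads, after solving for $\beta_d(x_d)$, precisely $\beta_d(x_d)\ge F(x_d;\gamma,\eta,\boldsymbol{\alpha},\boldsymbol{\beta})$; since $\beta_d$ enters the objective only through the positively weighted term $\int\beta_d\,d\nu_d$, the pointwise-smallest admissible choice $\beta_d=F$ is optimal.

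\textbf{Step 2: admissibility of $F$ — the one genuinely new point.} In contrast to the compact case, $F(\cdot\,;\gamma,\eta,\boldsymbol{\alpha},\boldsymbol{\beta})$ need not belong to $C_b(\RR)$. For each fixed $x_d$ it is finite: on the compact set $\Lambda$ the quantities $f,\alpha_k,\eta x_1'$ are bounded, every $\beta_k$ is bounded, and for $1\le k\le d-1$ the inner supremum over $x_k\in\RR$ of $-\beta_k(x_k)-\gamma|x_k-x_k'|$ is at most $-\inf\beta_k<\infty$; and being a supremum of $\gamma$-Lipschitz functions of $x_d$ it is $\gamma$-Lipschitz, hence continuous. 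However the penalty $-\gamma|x_d-x_d'|$ forces $F(x_d;\cdots)\le C_1-\gamma\,\text{dist}(x_d,\Lambda_d)\to-\infty$, so $F$ is bounded above but not below. I would handle this by truncation: for $M>0$ the function $\beta_d^{(M)}:=F(\cdot;\cdots)\vee(-M)$ lies in $C_b(\RR)$, still satisfies $\beta_d^{(M)}\ge F$ and hence $\widetilde H\le 0$, and $\int\beta_d^{(M)}\,d\nu_d\downarrow\int F\,d\nu_d$ by monotone convergence; the limit is finite since $F$ is bounded above and bounded below by an affine function of $|x_d|$, while $\nu_d$ has a finite first moment under Assumption~\ref{aspn.moment}. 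This shows the infimum over $C_b$-valued $\beta_d$ equals the value with $\beta_d=F$, validating Step~1.

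\textbf{Step 3: normalize.} As in Corollary~\ref{cor.dual_reformulation}, the objective in Step~1 is invariant under $\beta_k\mapsto\beta_k+\lambda_k$ for constants $\lambda_k$ ($1\le k\le d-1$), because $\int\beta_k\,d\nu_k$ increases by $\lambda_k$ while the term $-\sum_{k=1}^{d-1}\beta_k(x_k)$ inside $F$ decreases it by $\lambda_k$, and $\gamma,\eta$ are untouched. Taking $\lambda_k=-\int\beta_k\,d\nu_k$ enforces $\int_\RR\beta_k\,d\nu_k=0$ for $1\le k\le d-1$ without changing the value, removes the middle sum, and leaves precisely the constraint set $\calS^{\Lambda}_{\boldsymbol{\nu}_{1:d-1}}$ and the claimed representation; by the symmetry of Remark~\ref{remark.symmetry} one may replace $\nu_d$ by any $\nu_k$. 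The only obstacle beyond the bookkeeping already carried out in Corollary~\ref{cor.dual_reformulation} is Step~2, namely checking that confining $\beta_d$ to $C_b(\RR)$ does not inflate the infimum.
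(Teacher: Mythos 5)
Your proof is correct and follows the same route the paper intends (the paper omits the proof, stating it simply repeats the arguments of Corollary~\ref{cor.dual_reformulation}): eliminate $\beta_d$ by substituting its pointwise-minimal admissible value $F$, then shift the remaining $\beta_k$ by constants to enforce the zero-integral normalization. Your Step~2 --- approximating the candidate $\beta_d=F$, which is unbounded below whenever $\gamma>0$ and hence not in $C_b(\RR)$, by the truncations $F\vee(-M)$ and passing to the limit using the finite first moment of $\nu_d$ --- is a genuine and correct refinement that the paper's ``simply repeat'' claim silently requires in the non-compact setting.
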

As a consequence of Corollary \ref{cor.dual_representation_general}, we can prove the Lipschitz property of the map $\varepsilon \rightarrow I^{\Lambda}(\varepsilon)$ under suitable conditions. Details are deferred to Section \ref{sect.distance_n_general}.

\begin{prop}\label{prop.Lip_general_support}
Let Assumption \ref{aspn.cost_function} hold. let $\Lambda = \bigtimes_{i=1}^d\Lambda_i$, where $\Lambda_1,\cdots, \Lambda_d$ are compact intervals in $\RR$. Define $R = \inf\left\lbrace r\geq 0: \calMM^{\Lambda}(r) \neq \varnothing \right\rbrace$. Then, the map \begin{align*}
\left(R, +\infty\right) &\longrightarrow \RR\\
\varepsilon &\longmapsto \bI^{\Lambda}(\varepsilon)
\end{align*}
is $\frac{\sup_{\bx\in\Lambda}f(\bx)}{R}$-Lipschitz.
\end{prop}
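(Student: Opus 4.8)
The plan is to exploit the dual representation of $\bI^{\Lambda}(\varepsilon)$ provided in Corollary \ref{cor.dual_representation_general}, together with concavity of $\varepsilon \mapsto \bI^{\Lambda}(\varepsilon)$, to reduce the Lipschitz estimate to a bound on the slope at the right endpoint. First I would record that, by Corollary \ref{cor.dual_representation_general}, for every $\varepsilon > R$ we have
\begin{equation*}
\bI^{\Lambda}(\varepsilon) = \inf_{(\gamma,\eta,\boldsymbol{\alpha},\boldsymbol{\beta})\in \calS^{\Lambda}_{\boldsymbol{\nu}_{1:d-1}}} \left[\gamma\varepsilon + \int_{\RR} F(x_d;\gamma,\eta,\boldsymbol{\alpha},\boldsymbol{\beta})\,\nu_d(dx_d)\right],
\end{equation*}
so $\bI^{\Lambda}$ is an infimum of affine (in $\varepsilon$) functions with nonnegative slopes $\gamma$; hence it is concave and nondecreasing on its domain. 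Concavity (already asserted in Proposition \ref{prop.cotinuity_epsilon} for the compact case, and inherited here) means the difference quotient $(\bI^{\Lambda}(\varepsilon_2) - \bI^{\Lambda}(\varepsilon_1))/(\varepsilon_2 - \varepsilon_1)$ is nonincreasing in each argument, so it suffices to control this quotient as $\varepsilon_1, \varepsilon_2 \downarrow R$, i.e. to bound the right derivative at $R^{+}$.

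Next I would produce, for each $\varepsilon$ slightly above $R$, a specific near-optimal dual feasible point whose $\gamma$-component is controlled. The natural choice is to take $\boldsymbol{\beta} \equiv 0$, $\boldsymbol{\alpha}\equiv 0$, $\eta = 0$, and choose $\gamma$ large enough that the single constraint $\widetilde H(\gamma,0,0,0)(\bx,\bx') = f(\bx') - \gamma\|\bx-\bx'\|_1 \le 0$ holds for all $\bx \in \RR^d$, $\bx'\in\Lambda$. Taking $\bx = \bx'$ shows we need $f(\bx')\le 0$ pointwise, which need not hold; so instead the correct accounting is that for $\bx' \in \Lambda$ fixed the infimum over $\bx$ of $\gamma\|\bx - \bx'\|_1$ is $0$, forcing again $f \le 0$. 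This indicates one should not kill $\boldsymbol{\beta}$ entirely but rather keep the freedom in $F$: with $\boldsymbol{\alpha} = 0$, $\eta = 0$, $\boldsymbol{\beta} = 0$, one has $F(x_d;\gamma,0,0,0) = \sup_{\bx'\in\Lambda} f(\bx')$ once $\gamma$ is large (the $-\gamma\sum|x_k - x_k'|$ term is optimized to $0$ by taking $\bx = \bx'$ inside the sup and the remaining $\bx_{1:d-1}$ are free but only enter through the vanishing $\boldsymbol{\beta}$). Thus the dual value at this point is $\gamma\varepsilon + \sup_{\bx'\in\Lambda}f(\bx')$, which already gives $\bI^{\Lambda}(\varepsilon) \le \gamma\varepsilon + \sup_{\Lambda}f$ for all sufficiently large $\gamma$ — but that is not quite the bound sought. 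The cleaner route is: since $\bI^{\Lambda}$ is concave on $(R,\infty)$ with $\bI^{\Lambda}(\varepsilon)\ge 0$ (as $f\ge 0$ and the feasible set is nonempty for $\varepsilon > R$) and $\bI^{\Lambda}(R^{+}) \ge 0$, the chord from $(R, 0)$ — more precisely from $(R, \bI^{\Lambda}(R^+))$ with $\bI^{\Lambda}(R^+)\ge 0$ — to any point $(\varepsilon, \bI^{\Lambda}(\varepsilon))$ has slope at most $\bI^{\Lambda}(\varepsilon)/(\varepsilon - R) \le \sup_{\bx\in\Lambda}f(\bx)/(\varepsilon-R)$. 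Letting $\varepsilon \downarrow R$ one must instead argue via a uniform upper bound $\bI^{\Lambda}(\varepsilon) \le \sup_\Lambda f$ valid for all $\varepsilon$ (which follows because $\EE_\pi[f(\bX)]\le \sup_\Lambda f$ trivially since every $\pi \in \calMM^{\Lambda}(\varepsilon)$ is supported on $\Lambda$), giving for any $R < \varepsilon_1 < \varepsilon_2$,
\begin{equation*}
0 \le \frac{\bI^{\Lambda}(\varepsilon_2) - \bI^{\Lambda}(\varepsilon_1)}{\varepsilon_2 - \varepsilon_1} \le \frac{\bI^{\Lambda}(\varepsilon_1)}{\varepsilon_1 - R} \le \frac{\sup_{\bx\in\Lambda}f(\bx)}{\varepsilon_1 - R},
\end{equation*}
where the middle inequality is concavity applied to the three points $R, \varepsilon_1, \varepsilon_2$ together with $\bI^{\Lambda}(R^+)\ge 0$.

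To finish I would observe that the displayed inequality says the right derivative of $\bI^{\Lambda}$ at any $\varepsilon_1 > R$ is at most $\sup_\Lambda f/(\varepsilon_1 - R)$, but the claimed Lipschitz constant $\sup_{\bx\in\Lambda}f(\bx)/R$ is uniform over the whole interval $(R,\infty)$; this is exactly what concavity delivers, since the slope of any chord is bounded by the slope of the chord anchored at $R$, and that slope is largest in the limit — but one must take the limiting chord from the \emph{origin-shifted} point. The honest statement is that concavity plus $\bI^{\Lambda}\ge 0$ plus $\bI^{\Lambda}\le \sup_\Lambda f$ forces every chord slope on $(R,\infty)$ to lie in $[0, \sup_\Lambda f / R]$: indeed for $R < a < b$, concavity of $\bI^\Lambda$ extended by $\bI^\Lambda(R^+) \ge 0$ gives $(\bI^\Lambda(b)-\bI^\Lambda(a))/(b-a) \le (\bI^\Lambda(a) - 0)/(a - R) \le \bI^\Lambda(a)/ (a-R)$, and since also this quantity is $\le \sup_\Lambda f /(a - R)$ for all $a$, while separately by monotonicity the slope is $\le \sup_\Lambda f / R$ is obtained by pushing $a$ down and controlling via the a priori bound — I will lay this out carefully. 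The main obstacle, and the point requiring the most care, is precisely this endpoint argument: justifying that $\bI^{\Lambda}(\varepsilon) \to \bI^{\Lambda}(R^+)$ with $\bI^{\Lambda}(R^+) \ge 0$ (finiteness and nonnegativity of the limit as $\varepsilon \downarrow R$), and then correctly converting the family of chord-slope bounds indexed by the left anchor into the single uniform constant $\sup_{\bx\in\Lambda}f(\bx)/R$. Continuity at $R^+$ and concavity on $(R,\infty)$ are available from (the analogue of) Proposition \ref{prop.cotinuity_epsilon}, and the a priori bound $\bI^{\Lambda}\le \sup_\Lambda f$ is immediate from compact support, so all ingredients are in hand.
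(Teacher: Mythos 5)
Your reduction to a chord-slope estimate via concavity does not close. Concavity of $\bI^{\Lambda}$ on $(R,+\infty)$ together with the a priori bounds $0\le \bI^{\Lambda}\le \sup_{\bx\in\Lambda}f(\bx)=:M$ only yields
\[
\frac{\bI^{\Lambda}(\varepsilon_2)-\bI^{\Lambda}(\varepsilon_1)}{\varepsilon_2-\varepsilon_1}\;\le\; \frac{M}{\varepsilon_1-R},
\]
which diverges as $\varepsilon_1\downarrow R$, and no amount of ``laying it out carefully'' converts the family of bounds $M/(\varepsilon_1-R)$ into the uniform constant $M/R$: the function $g(\varepsilon)=M\min\{1,(\varepsilon-R)/\delta\}$ is concave, nondecreasing and $[0,M]$-valued on $(R,\infty)$, yet has Lipschitz constant $M/\delta$, which is arbitrarily large. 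So the ingredients you declare to be ``in hand'' are strictly insufficient, and the endpoint argument you defer is exactly the step that is missing, not merely delicate.

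What is needed — and what the paper does — is a bound on the dual variable $\gamma$ that is uniform in $\varepsilon>R$. Starting from Corollary \ref{cor.dual_representation_general}, fix a martingale measure $\pi$ with marginals $\bmu$ supported in $\Lambda$ and lower-bound the dual objective using the diagonal coupling $\bpi\in\Pi(\pi,\pi)$: the transport term $\gamma\sum_k|x_k-x_k'|$ vanishes, the $\balpha$-terms vanish by the martingale property, the $\bbeta$-terms vanish by the normalization $\int\beta_k\,d\nu_k=0$, and one is left with $\gamma\varepsilon+\EE_{\pi}[f(\bX)]\ge \gamma R$ since $f\ge 0$. Hence for $\gamma>B:=2M/R$ the dual objective strictly exceeds $M\ge \bI^{\Lambda}(\varepsilon)$, so for every $\varepsilon>R$ simultaneously the infimum may be restricted to $\gamma\le B$. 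Then $\bI^{\Lambda}$ is a pointwise infimum of affine functions of $\varepsilon$ with slopes in $[0,B]$, hence $B$-Lipschitz on $(R,+\infty)$. Your first instinct — playing with explicit dual feasible points with $\balpha=\bbeta=0$ — was pointing in the right direction, but the correct move is not to exhibit a good feasible point; it is to rule out large $\gamma$ at the optimum by comparing against the known primal upper bound $M$. (Note also that this argument yields the constant $2M/R$, which is what the paper's own proof actually establishes even though the statement quotes $M/R$.)
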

By Lemma \ref{lemma.reduce_to_compact} and Proposition \ref{prop.Lip_general_support}, suppose that $f$ is $L$-Lipschitz, for $\delta < \varepsilon$, we have 
\begin{align*}
\bI(\varepsilon) &\leq L\delta + \bI^{\Lambda^{\delta}}(\varepsilon+\delta)\leq L\delta + \frac{L\max_{\bx\in\Lambda^{\delta}}|f(\bX)|}{\varepsilon}\delta +  \bI^{\Lambda^{\delta}}(\varepsilon) \leq L\left(1 + \frac{\max_{\bx\in\Lambda^{\delta}}|f(\bx)|}{\varepsilon}\right)\delta + \bI(\varepsilon),
\end{align*}
which gives
\begin{align}
\left|\bI(\varepsilon) - \bI^{\Lambda^{\delta}}(\varepsilon)\right|&\leq L\left(1 + \frac{\sup_{\bx\in\Lambda^{\delta}}|f(\bX)|}{\varepsilon}\right)\delta\leq L\left(1 + \frac{|f(\mathbf{0})| + 2Ld^2C'\sqrt{\log(1/\delta)}}{\varepsilon}\right)\delta. \label{ieq.I_IOmega_delta_bound}
\end{align}
Similar to Section \ref{sect.DRMOT_compact}, we define the empirical version of $\bI^{\Lambda}(\varepsilon)$ in \eqref{eq.worstcase_compactSupport} by
\begin{equation*}
    \bI^{\Lambda}_n(\varepsilon) := \sup_{\pi\in\calMM^{\Lambda}_n(\varepsilon)} \EE_{\pi} \left[f(\bX)\right],
\end{equation*}
where $\calMM^{\Lambda}_n(\varepsilon) = \left\lbrace \pi\in\calM_0(\Lambda): d\left(\pi, \Pi\left(\widehat\bmu^n\right)\right)\leq\varepsilon \right\rbrace $. The following lemma investigates the feasibility of $\bI^{\Lambda^{\delta}}_n(\varepsilon)$. The proof is similar to Lemma \ref{lemma.feasibility}, details can be found in Section \ref{sect.distance_n_general}.

\begin{lemma}\label{lemma.feasibility_non_compact}
Let Assumptions \ref{aspn.convexorder} and
\ref{aspn.moment} hold. Then, for any $\varepsilon>0$ and $\delta\in(0,\varepsilon/2)$,
there exist universal constants $C_{3}, C_{4}>0$ such that $\bI^{\Lambda^{\delta}}_{n}(\varepsilon)$
is feasible with probability at least $1-\delta'$ when $n\geq N'(\varepsilon,\delta'):= \frac{4d^{2}}{C_{4}}\log\left(\frac{dC_{3}}{\delta'}\right)  \varepsilon^{-2}$.
\end{lemma}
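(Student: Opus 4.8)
The plan is to exhibit, on an event of probability at least $1-\delta'$, an explicit feasible point of $\calMM^{\Lambda^\delta}_n(\varepsilon)$; the argument parallels the proof of Lemma \ref{lemma.feasibility}, with the compactification of Lemma \ref{lemma.reduce_to_compact} and a sub-Gaussian Wasserstein concentration bound replacing, respectively, the trivial restriction to a bounded interval and the bounded-support concentration estimate used there.

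First I would fix a benchmark. By Assumption \ref{aspn.convexorder} and Strassen's theorem \cite{strassen1965} there is a true martingale measure $\pi^{\ast}\in\Pi(\bmu)$, and the mean-zero part of Assumption \ref{aspn.moment} places it in $\calM_0(\RR^d)$, so $\pi^{\ast}\in\calMM(0)$. Applying Lemma \ref{lemma.reduce_to_compact} to $\pi^{\ast}$ produces a mean-zero martingale measure $\widehat\pi$ supported on $\Lambda^\delta$ with $\calW(\pi^{\ast},\widehat\pi)\le\delta$; since $d(\pi^{\ast},\Pi(\bmu))=0$, the triangle inequality gives $d(\widehat\pi,\Pi(\bmu))\le\delta$, and projecting a near-optimal coupling coordinate by coordinate (the inequality noted after \eqref{def.uncertainty_set}) yields
\[
\sum_{i=1}^d\calW\big(\widehat\pi_i,\mu_i\big)\le d\big(\widehat\pi,\Pi(\bmu)\big)\le\delta .
\]

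Next I would pass from $\bmu$ to the empirical marginals $\widehat\bmu^n$. Gluing coordinate-wise optimal couplings of $\widehat\pi_i$ and $\widehat\mu^{(n)}_i$ conditionally on a draw from $\widehat\pi$ produces a measure in $\Pi(\widehat\bmu^n)$ coupled to $\widehat\pi$ at cost $\sum_i\calW(\widehat\pi_i,\widehat\mu^{(n)}_i)$, so, combining with the triangle inequality for $\calW$ and the previous display,
\[
d\big(\widehat\pi,\Pi(\widehat\bmu^n)\big)\le\sum_{i=1}^d\calW\big(\widehat\pi_i,\widehat\mu^{(n)}_i\big)\le\delta+\sum_{i=1}^d\calW\big(\mu_i,\widehat\mu^{(n)}_i\big).
\]
Under Assumption \ref{aspn.moment} each $\mu_i$ is one-dimensional with a square-exponential moment, so it obeys a concentration inequality of the form $\PP\big(\calW(\mu_i,\widehat\mu^{(n)}_i)>t\big)\le C_3\exp(-C_4 n t^2)$ with $C_3,C_4$ depending only on $\gamma$; this is the sub-Gaussian counterpart of the estimate invoked in Lemma \ref{lemma.feasibility}, and in one dimension it follows from the identity $\calW(\mu_i,\nu)=\int|F_{\mu_i}-F_\nu|$, the $O(n^{-1/2})$ bound on $\EE\,\calW(\mu_i,\widehat\mu^{(n)}_i)$ valid under the moment condition, and a bounded-differences (or $T_1$ transport-entropy) concentration of $\calW(\mu_i,\widehat\mu^{(n)}_i)$ about its mean. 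A union bound over $i=1,\dots,d$ with $t=\varepsilon/(2d)$ then gives $\sum_i\calW(\mu_i,\widehat\mu^{(n)}_i)\le\varepsilon/2$ with probability at least $1-dC_3\exp(-C_4 n\varepsilon^2/(4d^2))$, which is at least $1-\delta'$ precisely when $n\ge N'(\varepsilon,\delta')=\frac{4d^2}{C_4}\log(dC_3/\delta')\varepsilon^{-2}$. On that event, using $\delta<\varepsilon/2$ in the second display gives $d(\widehat\pi,\Pi(\widehat\bmu^n))\le\delta+\varepsilon/2\le\varepsilon$, so $\widehat\pi\in\calMM^{\Lambda^\delta}_n(\varepsilon)$ and $\bI^{\Lambda^\delta}_n(\varepsilon)$ is feasible.

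The main obstacle is the Wasserstein concentration estimate: unlike in Lemma \ref{lemma.feasibility}, the empirical transport error is a Lipschitz functional of unbounded (sub-Gaussian) data rather than of bounded data, so McDiarmid's inequality does not apply directly; the bound must be taken from the literature on empirical Wasserstein convergence under exponential-moment conditions, or re-derived via a transport-entropy inequality together with the one-dimensional mean estimate, and one must check that the constants $C_3,C_4$ are independent of $\varepsilon$ and $\delta$ and depend on the marginals only through the constant $\gamma$ of Assumption \ref{aspn.moment}.
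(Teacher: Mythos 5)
Your proposal is correct and follows essentially the same route as the paper: fix a martingale measure $\pi^{*}\in\Pi(\bmu)$, compactify it via Lemma \ref{lemma.reduce_to_compact} to get $\widehat\pi$ supported on $\Lambda^{\delta}$ with $\calW(\pi^{*},\widehat\pi)\le\delta$, and combine the triangle inequality with the concentration bound $\sum_i\calW(\mu_i,\widehat\mu_i^{(n)})\le\varepsilon/2$ to conclude $d(\widehat\pi,\Pi(\widehat\bmu^n))\le\delta+\varepsilon/2\le\varepsilon$. The Wasserstein concentration you flag as the main obstacle is exactly what the paper takes off the shelf (Theorem 2 of \cite{fournier2015rate}, valid under the square-exponential moment condition of Assumption \ref{aspn.moment}), so no re-derivation is needed.
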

Now, we are able to provide the sample complexity result for $\bI^{\Lambda^{\delta}}_n$, which can be viewed as a parallel version of Theorem \ref{thm.distance_n}. Please find the complete proof in Section \ref{sect.distance_n_general}
\begin{theorem}\label{thm.distance_n_general}
Let Assumptions \ref{aspn.convexorder} and \ref{aspn.moment} hold. Suppose that $f$ is an $L$-Lipschitz function. Then, given $\varepsilon>0$ and  $\delta<\frac{\varepsilon}{4}$, for any $\delta'\in(0,1)$ and $n\geq N'\left(\frac{\varepsilon}{2},\frac{\delta'}{2d}\right)$ \emph{(}defined in Lemma \ref{lemma.feasibility_non_compact}\emph{)} , with probability at least $1-\delta'$ we have
\begin{equation*}
\left|\bI_n^{\Lambda^{\delta}}(\varepsilon) - \bI^{\Lambda^{\delta}}(\varepsilon)\right|\le \frac{dB}{\varepsilon}\sqrt{\frac{\log\left(2C_3d/\delta'\right)}{C_4n}},
\end{equation*}
where the universal constants $C_3, C_4>0$ are defined in Lemma \ref{lemma.feasibility_non_compact}, and $B$ is a constant depending on $L$ and $\bmu$.
\end{theorem}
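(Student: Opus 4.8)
The plan is to mirror the proof of Theorem \ref{thm.distance_n}, using the dual representation from Corollary \ref{cor.dual_representation_general} in place of Corollary \ref{cor.dual_reformulation}. First I would observe that, since we have restricted attention to the fixed compact domain $\Lambda^{\delta}$, the problems $\bI^{\Lambda^{\delta}}(\varepsilon)$ and $\bI_n^{\Lambda^{\delta}}(\varepsilon)$ differ only through the benchmark marginals $\bmu$ versus $\widehat\bmu^n$ appearing in the uncertainty-set constraint $d(\pi,\Pi(\cdot))\le\varepsilon$. The key structural fact I will use is that changing the benchmark from $\bnu$ to $\bnu'$ perturbs the value by a controlled amount: concretely, if $\sum_{i=1}^d\calW(\nu_i,\nu_i')\le\rho$, then from the dual form $\gamma\varepsilon + \int F(x_d;\gamma,\eta,\balpha,\bbeta)\,\nu_d(dx_d)$ (with the normalization $\int\beta_k\,\nu_k=0$ for $k\le d-1$) one sees that the objective changes by at most $\gamma\rho$ plus a Lipschitz-in-$x_d$ term times $\calW(\nu_d,\nu_d')$; the crucial point is that the near-optimal dual variable $\gamma$ can be bounded by $2B/\varepsilon$ for a constant $B$ depending on $L$ and $\bmu$ (this is where the Lipschitz/continuity estimate — Proposition \ref{prop.Lip_general_support} together with the explicit Lipschitz constant $\sup_{\bx\in\Lambda^\delta}f(\bx)/R$ — enters, exactly as $\gamma\le 2D/\varepsilon$ was used in Theorem \ref{thm.distance_n}).

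The key steps, in order, are: (1) invoke Lemma \ref{lemma.feasibility_non_compact} so that for $n\ge N'(\varepsilon/2,\delta'/(2d))$ both $\bI_n^{\Lambda^\delta}(\varepsilon)$ and $\bI_n^{\Lambda^\delta}(\varepsilon/2)$ are feasible with probability at least $1-\delta'$, and on that event bound the near-optimal $\gamma$ in the dual of $\bI_n^{\Lambda^\delta}(\varepsilon)$ by a quantity of order $B/\varepsilon$, using that $\bI_n^{\Lambda^\delta}$ is finite and Lipschitz in its radius (argue as in Proposition \ref{prop.Lip_general_support}, now with empirical marginals); (2) write $\bI^{\Lambda^\delta}(\varepsilon)$ and $\bI_n^{\Lambda^\delta}(\varepsilon)$ via Corollary \ref{cor.dual_representation_general}, take a near-optimal dual tuple for one and plug it into the other, so that the gap is controlled by $\gamma\sum_{i=1}^d\calW(\mu_i,\widehat\mu_i^{(n)})$ (the $\gamma|x_k-x_k'|$ penalty terms make $F$ automatically $\gamma$-Lipschitz in each $x_k$, which converts the change of integrating measure into a Wasserstein distance); (3) apply a one-dimensional Wasserstein concentration inequality — the same device used for Theorem \ref{thm.distance_n}, here needing the square-exponential moment bound of Assumption \ref{aspn.moment} to guarantee $\EE\,\calW(\mu_i,\widehat\mu_i^{(n)})=O(n^{-1/2})$ and the corresponding high-probability bound — and a union bound over $i=1,\dots,d$ to get $\sum_i\calW(\mu_i,\widehat\mu_i^{(n)})\le \mathrm{const}\cdot d\sqrt{\log(C_3 d/\delta')/(C_4 n)}$ with probability $1-\delta'$; (4) combine to obtain $|\bI_n^{\Lambda^\delta}(\varepsilon)-\bI^{\Lambda^\delta}(\varepsilon)|\le (2B/\varepsilon)\cdot d\sqrt{\log(2C_3 d/\delta')/(C_4 n)}$, absorbing constants into $B$.

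The main obstacle I expect is step (1)–(2): bounding the near-optimal dual multiplier $\gamma$ uniformly, and doing so for the \emph{empirical} problem $\bI_n^{\Lambda^\delta}$. For the population problem this is Proposition \ref{prop.Lip_general_support}, but $\widehat\bmu^n$ need not satisfy the convex ordering, so I must re-derive the Lipschitz estimate for $\bI_n^{\Lambda^\delta}(\cdot)$ directly from its dual form, using that $\calMM_n^{\Lambda^\delta}(\varepsilon/2)\ne\varnothing$ on the good event (hence $R$ for the empirical problem is at most $\varepsilon/2$) — this is precisely why Lemma \ref{lemma.feasibility_non_compact} is stated with the slack factor between $\varepsilon/2$ and $\varepsilon$. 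A secondary technical point is checking that the extra linear term $\eta x_1'$ in $\widetilde H$ does not disrupt the perturbation argument; since $\Lambda^\delta_1$ is compact and $\eta x_1'$ does not involve $x_1$ (the variable being integrated against $\nu_1$), it contributes nothing to the change-of-measure estimate, so this is routine. Everything else — the definition of $B$ in terms of $L$, $\bmu$, and the radius of $\Lambda^\delta$, and the bookkeeping of the $O(n^{-1/2})$ rate — follows the template of Theorem \ref{thm.distance_n} essentially verbatim.
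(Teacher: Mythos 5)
Your proposal is correct and follows essentially the same route as the paper: a marginal-by-marginal telescoping reduction, the dual representation of Corollary \ref{cor.dual_representation_general}, a bound of order $1/\varepsilon$ on the near-optimal multiplier $\gamma$ obtained from a feasible reference martingale supported on $\Lambda^{\delta}$ (available on the good event precisely because of the $\varepsilon/2$ slack in Lemma \ref{lemma.feasibility_non_compact} and the condition $\delta<\varepsilon/4$), the $\gamma$-Lipschitzness of $F$ in $x_d$, and one-dimensional Wasserstein concentration with a union bound. You also correctly flag the two delicate points the paper handles — re-deriving the $\gamma$-bound for the empirical (non-convex-ordered) marginals and the harmlessness of the $\eta x_1'$ term — so no gap remains.
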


Next, we approximate $\bI_n^{\Lambda^{\delta}}(\varepsilon)$ by proper discretization. Recalled that $\Lambda^{\delta} = \bigtimes_{i=1}^d\Lambda^{\delta}_i$, where $\Lambda^{\delta}_i = [-iC'\sqrt{\log(1/\delta)}, iC'\sqrt{\log(1/\delta)}]$ and $C'$ is a constant depends on $\bmu$ and $\gamma$ (defined in Assumption \ref{aspn.moment}). Similar to Section \ref{subsec.discretization}, the $N$-fold discretization of $\Lambda_i^{\delta}$ is defined by 
\begin{equation*}
\Lambda_{i}^{\delta, N} := \left\lbrace -iC'\sqrt{\log(1/\delta)} + k\cdot\frac{2iC'\sqrt{\log(1/\delta)}}{N}: k=0,1,\cdots,N \right\rbrace.
\end{equation*}
Let $\Lambda^{\delta,N}$ be the Cartesian product $\bigtimes_{i=1}^d\Lambda_i^{\delta, N}$. Denoted by  
$\calM_0\left(\Lambda^{\delta, N};\tau\right)$  the set of $\tau-$martingale measures that supported on  $\Lambda^{\delta, N}$, and each marginal has expectation zero. We introduce the  following finite-dimensional LP:
\begin{equation*}\label{def.finite_LP_general}
\bI_{n,N}^{\Lambda^{\delta}}(\varepsilon, \tau)  := 
\sup_{\pi\in\calMM^{\Lambda^{\delta}}_{n, N}(\varepsilon,\tau)}\EE_{\pi}\left[f(\bX)\right],
\end{equation*}
where the uncertainty set is defined by 
\begin{equation*}
\calMM^{\Lambda^{\delta}}_{n, N}( \varepsilon, 
\tau) := \left\lbrace \pi\in\calM_0\left(\Lambda^{\delta, N};\tau\right):  d\left(\pi, \Pi\left(\boldsymbol{\widehat\mu}^{(n)}\right)\right)\leq\varepsilon\right\rbrace.
\end{equation*}
The following result illustrates how to use  $\bI_{n,N}^{\Lambda^{\delta}}(\varepsilon, \tau)$ to approximate  $\bI^{\lambda^{\delta}}_n(\varepsilon)$. 
\begin{theorem}\label{thm.non_compact_final_step}
Let Assumption \ref{aspn.convexorder} and \ref{aspn.moment} hold. Let $\Lambda^{\delta} = \bigtimes_{i=1}^d[-iC'\sqrt{\log(1/\delta)}, iC'\sqrt{\log(1/\delta)}]$, where $C'$ is a constant depends on $\bmu$ and $\gamma$. Suppose that $f$ is a $L$-Lipschitz function. Then, given $\varepsilon >0$, $\delta'\in(0,1)$,  with probability at least $1-\delta'$,
\begin{equation*}
\left|\bI_n^{\Lambda^{\delta}}(\varepsilon) - \bI^{\Lambda^{\delta}}_{n, N}\left(\varepsilon + \frac{2d^{3/2}C'\sqrt{\log(1/\delta)}}{N},  \frac{2dC'\sqrt{\log(1/\delta)}}{N}\right)\right|\leq\frac{c\sqrt{\log(1/\delta)}}{\varepsilon}\cdot\frac{1}{N}
\end{equation*}
holds when $n\geq N'(\varepsilon,\delta')$ \emph{(}defined in Lemma \ref{lemma.feasibility_non_compact}\emph{)}. Here $c$ is a constant depending only on $\bmu$, $L$, $\gamma$, and $d$. 
\end{theorem}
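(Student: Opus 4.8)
The plan is to repeat the proof of Theorem~\ref{thm.discretize_approximation} almost line by line, after the observation that the box $\Lambda^{\delta}$ was built precisely so that the hypotheses used there hold, with every geometric parameter of size $\Theta(\sqrt{\log(1/\delta)})$. Indeed $\Lambda_1^{\delta}\subseteq\cdots\subseteq\Lambda_d^{\delta}$, so Assumption~\ref{aspn.compact_support} holds; the endpoints satisfy $a_i-a_{i+1}=b_{i+1}-b_i=C'\sqrt{\log(1/\delta)}$, so the dispersion Assumption~\ref{aspn.disperse} holds with $\tau=C'\sqrt{\log(1/\delta)}>0$; the maximal edge length is $l=2dC'\sqrt{\log(1/\delta)}$; and since $f$ is $L$-Lipschitz, $\sup_{\Lambda^{\delta}}|f|\le|f(\mathbf{0})|+L\sum_{i=1}^{d}iC'\sqrt{\log(1/\delta)}=O(\sqrt{\log(1/\delta)})$. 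Lemma~\ref{lemma.discretize} then applies to $\Lambda^{\delta}$ with the grid $\Lambda^{\delta,N}$ of the statement, producing a discretization error $\le l\sqrt d/N=2d^{3/2}C'\sqrt{\log(1/\delta)}/N$ and $\tau$-martingale slack $\le l/N=2dC'\sqrt{\log(1/\delta)}/N$, exactly the two quantities inflating $\varepsilon$ and bounding the martingale defect in the theorem. The one ingredient not already present in Theorem~\ref{thm.discretize_approximation} is that all measures carry mean-zero marginals; both operations below preserve this.

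For the direction $\bI_n^{\Lambda^{\delta}}(\varepsilon)\le\bI_{n,N}^{\Lambda^{\delta}}\bigl(\varepsilon+l\sqrt d/N,\,l/N\bigr)+O(\sqrt{\log(1/\delta)}/N)$, pick $\pi\in\calMM_n^{\Lambda^{\delta}}(\varepsilon)$ with $\EE_{\pi}f$ within $\xi$ of $\bI_n^{\Lambda^{\delta}}(\varepsilon)$ and discretize it via Lemma~\ref{lemma.discretize}. The construction behind that lemma pushes conditional laws onto the two nearest grid points with barycentric weights, which preserves conditional means and hence keeps each marginal centered at zero, so the resulting $\pi^{(N)}$ lies in $\calM_0(\Lambda^{\delta,N};l/N)$. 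The triangle inequality for $d(\cdot,\Pi(\widehat{\bmu}^n))$ places $\pi^{(N)}$ in $\calMM_{n,N}^{\Lambda^{\delta}}(\varepsilon+l\sqrt d/N,\,l/N)$, and Kantorovich duality for the $L$-Lipschitz $f$ gives $|\EE_{\pi^{(N)}}f-\EE_{\pi}f|\le L\,\calW(\pi,\pi^{(N)})\le Ll\sqrt d/N$. Letting $\xi\downarrow0$ proves this direction; no factor $1/\varepsilon$ appears, and the term is dominated by the claimed bound.

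For the reverse inequality, pick a $\xi$-optimizer $\pi$ of $\bI_{n,N}^{\Lambda^{\delta}}(\varepsilon+l\sqrt d/N,\,l/N)$ — a mean-zero $\tau_N$-martingale on the grid, $\tau_N=l/N$ — and correct it to a genuine martingale via $\widetilde{X}_k:=X_k-\sum_{j=1}^{k-1}\bigl(\EE[X_{j+1}\mid\calF_j]-X_j\bigr)$. The correction increments have mean zero (the marginals of $\pi$ do), so $\widetilde{\pi}$ again has mean-zero marginals; $|\widetilde{X}_k-X_k|\le(d-1)\tau_N$, whence $\calW(\pi,\widetilde{\pi})\le d^2\tau_N/2=d^3C'\sqrt{\log(1/\delta)}/N$; and since the per-step correction is $\le\tau_N$ while consecutive half-widths of $\Lambda^{\delta}$ grow by $\tau=C'\sqrt{\log(1/\delta)}$, for $N\gtrsim d^2$ the corrected process stays in $\Lambda^{\delta}$ — this is the non-compact instance of the conversion lemma used inside the proof of Theorem~\ref{thm.discretize_approximation}. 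Hence $\widetilde{\pi}\in\calMM_n^{\Lambda^{\delta}}(\varepsilon+\Delta_N)$ with $\Delta_N=O(d^3C'\sqrt{\log(1/\delta)}/N)$, so $\EE_{\widetilde{\pi}}f\le\bI_n^{\Lambda^{\delta}}(\varepsilon+\Delta_N)$. By Corollary~\ref{cor.dual_representation_general} (equivalently Proposition~\ref{prop.Lip_general_support} applied to the empirical marginals) the map $\varepsilon\mapsto\bI_n^{\Lambda^{\delta}}(\varepsilon)$ is concave and bounded in absolute value by $\sup_{\Lambda^{\delta}}|f|$, while Lemma~\ref{lemma.feasibility_non_compact} keeps its feasibility threshold strictly below $\varepsilon$ (in fact $\le\varepsilon/2$ after the usual adjustment of constants) with the stated probability, so its forward increments at $\varepsilon$ have slope $\lesssim\sup_{\Lambda^{\delta}}|f|/\varepsilon$ and $\bI_n^{\Lambda^{\delta}}(\varepsilon+\Delta_N)\le\bI_n^{\Lambda^{\delta}}(\varepsilon)+O\bigl(\sup_{\Lambda^{\delta}}|f|/\varepsilon\bigr)\Delta_N$. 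Adding the value perturbation $L\,\calW(\pi,\widetilde\pi)$ and sending $\xi\downarrow0$ closes the bound.

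The triangle inequalities and the Kantorovich/Lipschitz steps are routine; two points need care and are the real obstacles. The first is the conversion lemma in the reverse direction: one must turn the discrete $\tau_N$-martingale into a true martingale that keeps every marginal exactly centered at zero, stays supported in $\Lambda^{\delta}$, and moves only $O(\tau_N)$ in $\calW$ — the Doob correction delivers the first and third properties for free, but staying inside $\Lambda^{\delta}$ forces $N\gtrsim d^2$ and it is here that the built-in dispersion $C'\sqrt{\log(1/\delta)}$ of $\Lambda^{\delta}$ is used (if necessary by discretizing a box slightly smaller than $\Lambda^{\delta}$, the loss absorbed into the constant $c$). The second is tracking the dependence on $\delta$: combining the slope bound $\sup_{\Lambda^{\delta}}|f|/\varepsilon=O(\sqrt{\log(1/\delta)}/\varepsilon)$ with $\Delta_N=O(\sqrt{\log(1/\delta)}/N)$ naively yields $\log(1/\delta)/(\varepsilon N)$, so recovering the sharper $\sqrt{\log(1/\delta)}$ of the statement requires a better slope estimate — exploiting that measures in $\calMM_n^{\Lambda^{\delta}}(\varepsilon)$ are $\calW$-close to the empirical marginals, whose supports have size $O(\sqrt{\log n})$ for $n\ge N'(\varepsilon,\delta')$, hence cannot place appreciable mass near the outer faces of $\Lambda^{\delta}$, so that $\bI_n^{\Lambda^{\delta}}(\varepsilon)-\bI_n^{\Lambda^{\delta}}(R_n^{\Lambda^{\delta}})$ is much smaller than $\sup_{\Lambda^{\delta}}|f|$. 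In the complementary regime $N\lesssim d^2$ the asserted bound exceeds the trivial bound $2\sup_{\Lambda^{\delta}}|f|$ and is vacuous, so nothing is lost there.
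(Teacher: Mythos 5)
Your first direction coincides with the paper's: the paper proves exactly the mean-zero-preserving discretization you describe as Lemma \ref{lemma.discretize_general} (an analogue of Lemma \ref{lemma.discretize} recording that the barycentric projection keeps each marginal centered), and then uses the Lipschitz bound $L\,\calW(\pi,\pi^{(N)})\le 2d^{3/2}CL/N$. For the reverse direction, however, you take a genuinely different, primal route (Doob-correct the $\tau_N$-martingale, then use concavity in $\varepsilon$), whereas the paper stays entirely on the dual side: it bounds the dual multipliers ($\gamma\le B^*$, $|\eta|\le B'$, $\|\alpha_k\|_\infty\le B_k$ by the induction from Theorem \ref{thm.discretize_approximation}) and compares the dual functional $F'$ with its perturbation $\widetilde F'$ carrying the extra terms $\gamma\cdot 2d^{3/2}C/N$ and $\sum_k(2dC/N)|\alpha_k|$. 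Note in passing that no primal "conversion lemma" appears inside the proof of Theorem \ref{thm.discretize_approximation} either; that proof is also dual-side, so the ingredient you cite as already available there does not exist in the paper.

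Two gaps in your reverse direction are real. First, the Doob correction $\widetilde X_k=X_k-\sum_{j<k}(\EE[X_{j+1}\mid\calF_j]-X_j)$ can push mass outside $\Lambda^{\delta}$: a grid-supported $\tau_N$-martingale may place mass at the outer grid points $\pm kC$, and the accumulated correction of size up to $(k-1)\tau_N$ then lands outside $\Lambda_k^{\delta}$ no matter how large $N$ is; requiring $N\gtrsim d^2$ does not cure the boundary case, and discretizing a smaller box changes the object $\bI^{\Lambda^{\delta}}_{n,N}$ the theorem is about. You would need either an explicit projection step with its own Wasserstein cost, or to compare against $\bI_n$ over a slightly enlarged box and then relate that back to $\bI_n^{\Lambda^{\delta}}$ — neither is supplied. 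Second, your slope estimate $\sup_{\Lambda^{\delta}}|f|/\varepsilon=O(\sqrt{\log(1/\delta)}/\varepsilon)$ loses a factor $\sqrt{\log(1/\delta)}$, as you acknowledge, and the repair you sketch (empirical supports of size $O(\sqrt{\log n})$) is not carried out and is not the mechanism that works. The correct fix, which the paper's dual argument implements via $B^*=\frac{4}{\varepsilon}(|f(\mathbf{0})|+\tfrac54\varepsilon L+L\sum_i m_1(\mu_i))$, is that the value function itself satisfies $|\bI_n^{\Lambda^{\delta}}(\varepsilon)|\le |f(\mathbf{0})|+L\sum_i m_1(\mu_i)+O(\varepsilon L)$ \emph{independently of} $\delta$, because every feasible measure has marginals Wasserstein-close to $\bmu$ and $f$ has linear growth; feeding this $\delta$-free bound into your concavity difference quotient gives slope $O(1/\varepsilon)$ and recovers $\sqrt{\log(1/\delta)}/(\varepsilon N)$. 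With these two repairs your primal route would go through, but as written it does not establish the stated bound.
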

Proof of Theorem \ref{thm.non_compact_final_step} is deferred to Section \ref{sect.proof_discretization_approximation_general}. Finally, by taking $\delta = n^{-1/2}$ and $N = n^{1/2}$, combining the results in Theorem \ref{thm.distance_n_general}, Theorem \ref{thm.non_compact_final_step} and \eqref{ieq.I_IOmega_delta_bound}, we have 
\begin{equation*}
   \left| \bI^{\Lambda^{\delta}}_{n, N}\left(\varepsilon + \frac{2d^{3/2}C'\sqrt{\log(1/\delta)}}{N},  \frac{2dC'\sqrt{\log(1/\delta)}}{N}\right) - \bI(\varepsilon)\right|\lesssim \frac{\sqrt{\log(n)}}{\varepsilon n^{1/2}}.
\end{equation*}

\section{Acknowledgement}
Support is acknowledged from NSF grants 2118199, 1915967, 1820942, 1838576 and AFOSR MURI 19RT1056 and the China Merchants Bank. 


\section{Proof of Theorem \ref{thm.strongdual}}\label{sect.proof_strong_dual}

We invoke the following standard min-max theorem of decision theory \cite{sion1958general} to prove
the Theorem \ref{thm.strongdual}.

\begin{lemma}
[Sion's minimax theorem, \cite{sion1958general}]\label{sion} Let $X$ be a
compact convex subset of a linear topological space and $Y$ a convex subset of
a linear topological space. If $f$ is a real-valued function on $X\times Y$ with

\begin{enumerate}
\item[\emph{(1)}] $f(x,\cdot)$ lower semi-continuous and quasi-convex on $Y$,
$\forall x\in X$.

\item[\emph{(}2)] $f(\cdot,y)$ upper semi-continuous and quasi-concave on $X$,
$\forall y\in Y$.
\end{enumerate}

then,
\[
\max_{x\in X}\inf_{y\in Y}f(x,y)=\inf_{y\in Y}\max_{x\in X}f(x,y)
\]

\end{lemma}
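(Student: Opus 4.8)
I would prove the only non-trivial inequality, $\max_{x\in X}\inf_{y\in Y}f(x,y)\ge\inf_{y\in Y}\max_{x\in X}f(x,y)=:v$; the reverse is the elementary $\sup\inf\le\inf\sup$. First I would note that both maxima in the statement are attained: for fixed $y$, $f(\cdot,y)$ is upper semicontinuous on the compact set $X$, and $x\mapsto\inf_y f(x,y)$ is an infimum of upper semicontinuous functions, hence itself upper semicontinuous on the compact set $X$.

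Next I would reduce the theorem to a finite statement via a finite-intersection-property argument. Fix a real $c<v$ and set $C_y:=\{x\in X:f(x,y)\ge c\}$: by upper semicontinuity $C_y$ is closed, by quasi-concavity of $f(\cdot,y)$ it is convex, and it is nonempty since $\max_x f(x,y)\ge v>c$. If every finite subfamily of $\{C_y\}_{y\in Y}$ has nonempty intersection, then compactness of $X$ gives a common point $\bar x$ with $\inf_y f(\bar x,y)\ge c$; letting $c\uparrow v$ finishes the proof. Since $\bigcap_{i=1}^n C_{y_i}=\{x:\min_i f(x,y_i)\ge c\}$ and $\min_i f(\cdot,y_i)$ is again upper semicontinuous on the compact set $X$, it suffices to prove the finite claim $(\star_n)$: \emph{if $X$ is compact convex, $f$ satisfies the two hypotheses on $X\times Y$ with $Y$ convex, and $c<\inf_{y\in\mathrm{conv}\{y_1,\dots,y_n\}}\max_x f(x,y)$, then $\max_x\min_{1\le i\le n}f(x,y_i)>c$} — note the infimum over $\mathrm{conv}\{y_1,\dots,y_n\}\subseteq Y$ is at least $v>c$.

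I would establish $(\star_n)$ by induction on $n$; the case $n=1$ is immediate. For $n\ge2$, observe that for every $w\in Y':=\mathrm{conv}\{y_1,\dots,y_{n-1}\}$ the segment $[w,y_n]$ lies in $\mathrm{conv}\{y_1,\dots,y_n\}$, so the hypothesis gives $c<\inf_{y\in[w,y_n]}\max_x f(x,y)$; granting the two-point case $(\star_2)$ this yields $\max_x\min\bigl(f(x,w),f(x,y_n)\bigr)>c$ for all $w\in Y'$. Define $\tilde f(x,w):=\min\bigl(f(x,w),f(x,y_n)\bigr)$ on $X\times Y'$. It again satisfies the hypotheses: for fixed $w$ it is a minimum of two upper semicontinuous quasi-concave functions of $x$, hence upper semicontinuous and quasi-concave; for fixed $x$ it is lower semicontinuous in $w$, and its sublevel set $\{w:\tilde f(x,w)\le\lambda\}$ equals either $\{w:f(x,w)\le\lambda\}$ (convex) or all of $Y'$, hence is convex. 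Since $\max_x\tilde f(x,w)>c$ for all $w\in\mathrm{conv}\{y_1,\dots,y_{n-1}\}$, the inductive hypothesis $(\star_{n-1})$ applied to $\tilde f$ gives $\max_x\min_{1\le i\le n-1}\tilde f(x,y_i)>c$; and $\min_{i\le n-1}\tilde f(x,y_i)=\min_{1\le i\le n}f(x,y_i)$, which is exactly $(\star_n)$.

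The heart of the matter, and what I expect to be the main obstacle, is the two-point case $(\star_2)$ — it has to be proved \emph{without} joint continuity of $f$ and \emph{without} separating hyperplanes, since the ambient spaces need not be locally convex. Parametrise the segment by $y_t:=(1-t)y_0+ty_1$, $t\in[0,1]$. The function $t\mapsto\max_x f(x,y_t)$ is a supremum of lower semicontinuous functions, hence lower semicontinuous on the compact interval $[0,1]$, so it attains a minimum $m>c$; fix $c_0\in(c,m)$, whence $\max_x f(x,y_t)>c_0$ for every $t$. Suppose for contradiction that $\max_x\min\bigl(f(x,y_0),f(x,y_1)\bigr)<c_0$. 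Two facts drive the argument: (i) by quasi-concavity, each superlevel set $S_t:=\{x:f(x,y_t)\ge c_0\}$ is convex (and closed, by upper semicontinuity); (ii) by quasi-convexity of $f(x,\cdot)$, for each fixed $x$ the set $\{t\in[0,1]:f(x,y_t)<c_0\}$ is an interval, so it cannot contain both endpoints — hence any $x$ with $f(x,y_t)\ge c_0$ for some $t$ must have $f(x,y_0)\ge c_0$ or $f(x,y_1)\ge c_0$. Set $T_j:=\bigcup_{x:\,f(x,y_j)\ge c_0}\{t\in[0,1]:f(x,y_t)>c_0\}$ for $j=0,1$: each $T_j$ is open (a union of open sets, since $t\mapsto f(x,y_t)$ is lower semicontinuous), is nonempty because $j\in T_j$ (using $\max_x f(x,y_j)>c_0$), and by (ii) the two sets cover $[0,1]$. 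Connectedness of $[0,1]$ then forces $T_0\cap T_1\ne\varnothing$, giving some $t^*$ and points $x,x'\in S_{t^*}$ with $f(x,y_0)\ge c_0$ and $f(x',y_1)\ge c_0$. Along the segment $[x,x']\subseteq S_{t^*}$ (convex by (i)) the sets $\{\lambda:f(x_\lambda,y_0)\ge c_0\}$ and $\{\lambda:f(x_\lambda,y_1)\ge c_0\}$ are closed subintervals of $[0,1]$ containing $0$ and $1$ respectively, and they are disjoint (a common $\lambda$ would violate the contradiction hypothesis); since $[0,1]$ is connected these two closed intervals cannot cover it, so some $\lambda^*$ has $f(x_{\lambda^*},y_0)<c_0$ and $f(x_{\lambda^*},y_1)<c_0$ while $f(x_{\lambda^*},y_{t^*})\ge c_0$ — impossible by (ii) applied to $x_{\lambda^*}$. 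This contradiction proves $(\star_2)$, completing the argument. The two genuinely non-routine points are obtaining the covering $[0,1]=T_0\cup T_1$ from fact (ii) and extracting the final contradiction from the one-dimensional geometry along $[x,x']$; both rely essentially on quasi-convexity/quasi-concavity rather than ordinary convexity/concavity.
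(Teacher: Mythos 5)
The paper offers no proof of this lemma: it is quoted as Sion's minimax theorem and imported by citation to \cite{sion1958general}, so there is no internal argument to compare against. Your write-up is a correct, self-contained proof, and it follows the well-known elementary route of Komiya rather than Sion's original KKM/Helly-type argument: reduce to a finite statement via the finite intersection property of the closed convex superlevel sets $C_y=\{x:f(x,y)\ge c\}$ in the compact set $X$, induct on the number of points with a two-point lemma as the engine, and prove the two-point lemma by connectedness arguments, first along the segment $[y_0,y_1]$ (the open cover $T_0\cup T_1=[0,1]$) and then along a segment inside the convex superlevel set $S_{t^*}$, using only semicontinuity and quasi-convexity/quasi-concavity — in particular no local convexity or separating hyperplanes, exactly as you intend. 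I checked the two-point case and it is sound. One small point you should make explicit: in the induction step you establish the pointwise bound $\max_x\tilde f(x,w)>c$ for every $w\in Y'=\mathrm{conv}\{y_1,\dots,y_{n-1}\}$, while $(\star_{n-1})$ as you stated it requires the strict inequality $c<\inf_{w\in Y'}\max_x\tilde f(x,w)$. This does not follow from the pointwise bound without a further remark; either observe that $w\mapsto\max_x\tilde f(x,w)$ is lower semicontinuous and $Y'$ is compact (a continuous image of a finite-dimensional simplex), so the infimum is attained and therefore exceeds $c$, or run the step with an intermediate constant $c<c'<\inf_{y\in\mathrm{conv}\{y_1,\dots,y_n\}}\max_x f(x,y)$, applying $(\star_2)$ at level $c'$ and $(\star_{n-1})$ at level $c$. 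With that one-line patch the argument is complete.
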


\begin{proof}[Proof of Theorem \ref{thm.strongdual}]
Define $\bx = (x_1,\cdots, x_d)$ and $\bx' = (x_1',\cdots, x_d')$. Consider the following function
	\begin{align*}
	\calL\left(\bpi,\gamma,\boldsymbol{\alpha}, \boldsymbol{\beta} \right) &=\int_{\Omega\times\Omega} \left[f\left(\bx'\right)+\sum_{k=1}^{d-1}\alpha_k(\bx'_{1:k})(x_{k+1}'-x_k')-\sum_{k=1}^{d}\beta_k(x_k)\right]\bpi(d\bx, d\bx') \\
	&\qquad +\gamma\left(\varepsilon - \int_{\Omega\times\Omega} \|\bx - \bx'\|_1 \bpi(d\bx, d\bx')\right) + \sum_{k=1}^d\int_{\Omega_k}\beta_k(x_k)\nu_k(dx_k)
	\end{align*}
	with
	\begin{align*}
	&\bpi \in \bPi := \{\text{Borel probability measures on $\Omega\times\Omega$}\};\\
	&\gamma \ge 0;\\
	&\boldsymbol{\alpha}= (\alpha_1,\cdots,\alpha_{d-1}), \textrm{ where } \alpha_k(\cdot)\in C\left(\Omega_{1:k}\right)\textrm{ for } 1\leq k\leq d-1;\\
	&\boldsymbol{\beta}= (\beta_1,\cdots,\beta_{d}), \textrm{ where } \beta_k(\cdot)\in C(\Omega_k)\textrm{ for } 1\leq k\leq d.
	\end{align*}
	Note that by Assumption \ref{aspn.compact_support}  we have $\Omega =\bigtimes_{i=1}^d\Omega_i$ is a compact subset of $\RR^d$. Thus,  $\Omega\times\Omega$ endowed with the usual metric is a compact Polish Space. As a result, we have $\bPi$ is compact with respect to the weak topology. Moreover, since $f, \left\lbrace\alpha_k\right\rbrace_{k=1}^{d-1}, \left\lbrace\beta_k\right\rbrace_{k=1}^{d}$ are continuous functions and the support of $\bpi$ is bounded, we have $\calL$ is continuous in $\bpi$.
	Finally, equip each $C(\Omega_{1:k})$ ($1\leq k\leq d-1$), $C(\Omega_k)$ ($1\leq k\leq d$) the uniform (with sup-norm) topology and $\gamma\in \RR_{\geq0}$ the Euclidean topology, we have that $\calL$ is also continuous in $\gamma$, $\alpha_k$ ($1\leq k\leq d-1$) and $\beta_k$ ($1\leq k\leq d$). Finally, $\calL$ is affine in $\left(\bpi,\gamma, \boldsymbol{\alpha}, \boldsymbol{\beta}\right)$, now we can invoke the Sion's minimax theorem (Lemma \ref{sion}) to obtain
	
	\begin{align}\label{eq.sion-minimax}
	\sup_{\bpi \in \bPi} \inf_{\mbox{$\begin{subarray}{c}\balpha\in C_{\balpha}(\Omega), \bbeta\in C_{\bbeta}(\Omega),\\
			\gamma\ge 0\end{subarray}$}} \calL\left(\bpi,\gamma,\boldsymbol{\alpha}, \boldsymbol{\beta} \right) =  \inf_{\mbox{$\begin{subarray}{c}\balpha\in C_{\balpha}(\Omega), \bbeta\in C_{\bbeta}(\Omega),\\
			\gamma\ge 0\end{subarray}$}}\sup_{\bpi \in \bPi} \calL\left(\bpi,\gamma,\boldsymbol{\alpha}, \boldsymbol{\beta} \right),
	\end{align}
	where ${C}_{\boldsymbol{\alpha}}(\Omega) = \bigtimes_{k=1}^{d-1} C(\Omega_{1:k})$, ${C}_{\boldsymbol{\beta}}(\Omega) = \bigtimes_{k=1}^{d} C(\Omega_{k})$.
	Let $\pi$ be the marginal of $\bpi$ on $\bx'$ (i.e., $\pi$ is the projection of $\bpi$ to its last $d$ marginals), we can rewrite the function $\calL$ by
	\begin{align*}
	\calL\left(\bpi,\gamma,\boldsymbol{\alpha}, \boldsymbol{\beta} \right) &=\int_{\Omega} f(\bx')\pi(d\bx') +  \int_{\Omega} \left[\sum_{k=1}^{d-1}\alpha_k(\bx'_{1:k})(x_{k+1}'-x_k')\right]\pi(d\bx') \\
	&+ \sum_{k=1}^d\left[\int_{\Omega_k}\beta_k(x_k)\nu_k(dx_k) - \int_{\Omega}\beta_k(x_k)\bpi(d\bx,d\bx')\right]\\
	&+\gamma\left(\varepsilon - \int_{\Omega\times\Omega} \|\bx - \bx'\|_1 \bpi(d\bx, d\bx')\right).
	\end{align*}
	Given any feasible $\bpi\in\bPi$, consider the inner infimum of the LHS in \eqref{eq.sion-minimax}, prevent it from being $-\infty$ we have
	\begin{equation}{\label{mart_alpha}}
	\int_{\Omega} \alpha_k(\bx'_{1:k})(x_{k+1}'-x_k')\pi(d\bx') = 0, \qquad  \forall \alpha_k(\cdot)\in C(\Omega_{1:k}), \quad \forall1\leq k\leq d-1,
	\end{equation}
	\begin{equation}\label{margin_beta}
	\int_{\Omega_k}\beta_k(x_k)\nu_k(dx_k) - \int_{\Omega}\beta_k(x_k)\bpi(d\bx,d\bx') = 0, \quad \beta_k(\cdot)\in C(\Omega_k), \quad \forall 1\leq k\leq d,
	\end{equation}
	and
	\begin{equation}\label{robust_gamma}
	\int_{\Omega\times\Omega} \|\bx-\bx'\|_1 \bpi(d\bx, d\bx')\leq \varepsilon.
	\end{equation}
	Let $\calF_k'=\sigma\left(X_1',\cdots,X_k'\right)$, $1\leq k\leq d-1$ be the canonical filtration generated by $\{X'_k\}_{k=1}^d$. Since $C(\Omega_{1:k})$ is dense in the space of $\calF_k'$-measurable integrable function on  $\Omega_{1:k}$, by the compactness of $\Omega_{1:k}$ and Dominance Convergence Theorem, the equation \eqref{mart_alpha} implies
	\begin{equation}\label{eq.mart_constraint}
	\EE_{ \pi}\left[X'_{k+1}|\calF_k'\right] = X_k' \quad \pi - a.s. \quad\forall 1\leq k\leq d-1.
	\end{equation}
	Hence $\{X_k'\}_{k=1}^d$ is a martingale under the probability measure $\pi$, which implies $\pi_1\preceq_{c}\cdots\preceq_{c}\pi_d$, where $\pi_k$ is the $k$-th marginal measure of $\pi$ ($1\leq k\leq d$).
	
	Now we turn to the equation \eqref{margin_beta}. For any $1\leq k\leq d$, note that any indicator function of Borel measurable set in $\Omega_k$ can be approximated by functions in $C(\Omega_k)$, similar to the previous argument, by the compactness of $\Omega_k$ and Dominated Convergence Theorem, we have $\bpi_k = \nu_k$ ($1\leq k\leq d$), where $\bpi_k$ is the $k$-th marginal measure of $\bpi$ (note that $\bpi$ is a $2d$-dimensional measure).
	
	Let $\bpi_{1:d}$ denote the projection of $\bpi$ to its first $d$ marginals. From the above discussions, we have $\bpi_{1:d}\in\Pi(\bnu)$. Note that $\pi$ is the projection of $\bpi$ to its last $d$ marginals. Thus,
	by inequality \eqref{robust_gamma} and the definition of Wasserstein distance, we obtain
	\begin{align*}
	d\left(\pi, \Pi(\bnu)\right)\leq \calW(\pi, \bpi_{1:d}) = \int_{\Omega\times\Omega}\|\bx-\bx'\|_1\bpi(d\bx, d\bx')\leq \varepsilon.
	\end{align*}
	Base on the above arguments, given $\bpi\in\bPi$, if the inner infimum of the LHS in \eqref{eq.sion-minimax} $\neq -\infty$, we have $\pi\in \calM_{\boldsymbol{\nu}}(\varepsilon)$  (note that $\pi$ is the projection of $\bpi$ on $\bx'$). Conversely, for any $\pi\in\calM_{\bnu}(\varepsilon)$, there exists a probability measure $\pi'\in\Pi(\bnu)$, such that $\calW(\pi, \pi')\leq \varepsilon$. Let $\bpi$ be the coupling of $\pi$ and $\pi'$ that attaining the infimum in the definition Wasserstein distance. It is straight forward to check that equation \eqref{mart_alpha}, \eqref{margin_beta} and inequality \eqref{robust_gamma} are satisfied. Thus, the inner infimum of the LHS in \eqref{eq.sion-minimax} $\neq-\infty$ for $\bpi$. 
	
	Hence, for $\pi\in\bPi$ such that the inner infimum of the LHS in \eqref{eq.sion-minimax} $\neq-\infty$, 
	\begin{align*}
	&\quad \inf_{\mbox{$\begin{subarray}{c}\balpha\in C_{\balpha}(\Omega), \bbeta\in C_{\bbeta}(\Omega),\\
			\gamma\ge 0\end{subarray}$}} \calL\left(\bpi,\gamma,\balpha, \bbeta\right) \\
	& = \inf_{\mbox{$\begin{subarray}{c}\balpha\in C_{\balpha}(\Omega), \bbeta\in C_{\bbeta}(\Omega),\\
			\gamma\ge 0 \end{subarray}$}} \EE_{\pi}\left[f(\bX')\right] +  \gamma\left(\varepsilon - \int_{\Omega\times\Omega} \|\bx-\bx'\|_1\bpi(d\bx, d\bx')\right)\\
	& = \EE_{\pi}\left[f(\bX')\right].
	\end{align*}
	Thus, 
	\begin{equation}\label{proof.RHS}
	\sup_{\bpi \in \bPi} \inf_{\mbox{$\begin{subarray}{c}\balpha\in C_{\balpha}(\Omega), \bbeta\in C_{\bbeta}(\Omega),\\
			\gamma\ge 0\end{subarray}$}} \calL\left(\bpi,\gamma,\boldsymbol{\alpha}, \boldsymbol{\beta}\right) =  I_{\boldsymbol{\nu}}(\varepsilon).
	\end{equation}
	On the other hand, an alternative expression of $\calL$ is
	\begin{equation}\label{eq.RHS_Sion}
	\begin{aligned}
	\calL\left(\bpi,\gamma,\boldsymbol{\alpha}, \boldsymbol{\beta} \right) &= \gamma\varepsilon + \sum_{k=1}^d\int_{\Omega_k}\beta_k(x_k)\nu_k(dx_k)
	+ \int_{\Omega\times\Omega} \left[f\left(\bx'\right)- \sum_{k=1}^{d}\beta_k(x_k)\right.\\
	&\left. -\gamma\|\bx - \bx'\|_1 + \sum_{k=1}^{d-1}\alpha_k(\bx'_{1:k})(x_{k+1}'-x_k')\right]\bpi(d\bx, d\bx')
	\end{aligned}
	\end{equation}
	Therefore, when taking the supremum of $\calL$ over $\bpi\in\bPi$, the probability measure $\bpi$ will pick the supremum of the inner integral in RHS of \eqref{eq.RHS_Sion} over all the $\bx, \bx'\in\Omega$, which implies the following:
	\begin{align}
	\nonumber&\sup_{\bpi\in\bPi}\calL \left(\bpi,\gamma,\boldsymbol{\alpha}, \boldsymbol{\beta} \right) \\
	\nonumber&= \gamma\varepsilon + \sum_{k=1}^d\int_{\Omega_k}\beta_k(x_k)\nu_k(dx_k)
	+ \sup_{\bx, \bx'\in\Omega}\left\lbrace f\left(\bx'\right)- \sum_{k=1}^{d}\beta_k(x_k)-\gamma\|\bx-\bx'\| \right. \\
	\nonumber& \left. \qquad\qquad \qquad\qquad\qquad\qquad\qquad\qquad\qquad\qquad + \sum_{k=1}^{d-1}\alpha_k(\bx_{1:k}')(x_{k+1}'-x_k')\right\rbrace\\
	\nonumber&= \gamma\varepsilon +  \sum_{k=1}^{d-1}\int_{\Omega_k}\beta_k(x_k)\nu_k(dx_k)
	+ \int_{\Omega_d}\beta_d(x_d)\nu_d(dx_d) + \sup_{x_d\in\Omega_{d}}\left\lbrace F\left(x_d; \gamma, \balpha,\bbeta\right) - \beta_d(x_d)\right\rbrace\\
	\nonumber&=  \gamma\varepsilon +  \sum_{k=1}^{d-1}\int_{\Omega_k}\beta_k(x_k)\nu_k(dx_k)
	+ \int_{\Omega_d}\left[\beta_d(x_d) + \sup_{x_d\in\Omega_{d}}\left\lbrace F\left(x_d; \gamma, \balpha,\bbeta\right) - \beta_d(x_d)\right\rbrace \right]\nu_d(dx_d)\\
	&\geq \gamma\varepsilon +  \sum_{k=1}^{d-1}\int_{\Omega_k}\beta_k(x_k)\nu_k(dx_k)
	+ \int_{\Omega_d} F\left(x_d; \gamma, \balpha,\bbeta\right)\nu_d(dx_d), \label{eq.inf_beta_plus_F}
	\end{align}
	where
\begin{equation*}
\begin{aligned} F\left(x_d; \gamma, \boldsymbol{\alpha} ,\boldsymbol{\beta}\right) : = \sup_{ 
\bx_{1:d-1}\in\Omega_{1:d-1},   \mathbf{x}'\in\Omega}\left\lbrace f\left(\mathbf{x}'\right)- \sum_{k=1}^{d-1}\beta_k(x_k)-\gamma\sum_{k=1}^{d}|x_k-x_k'| + \right. \\ \left. \sum_{k=1}^{d-1}\alpha_k(\mathbf{x}_{1:k}')(x_{k+1}'-x_k')\right\rbrace.\end{aligned}
\end{equation*}
Let
{
\begin{align*}
\mathcal{S}_{\boldsymbol{\nu}_{1:d-1}} =\left\{  \left(
\gamma,\boldsymbol{\alpha} ,\boldsymbol{\beta}\right)  :\gamma\geq0,\alpha_{k}\in C(\Omega_{1:k}),\beta_{k}\in C(\Omega_{k}), \int_{\Omega_{k}}\beta_{k}(x)\nu_{k}(dx)=0,\forall 1\leq k\leq
d-1\right\}.
\end{align*}
}
Note that the RHS of equation \eqref{eq.inf_beta_plus_F} would not change if we shift $\bbeta$ by a constant vector. Then, the RHS of \eqref{eq.sion-minimax} is lower bounded by
	\begin{align}
	\nonumber&\inf_{\mbox{$\begin{subarray}{c}\balpha\in C_{\balpha}(\Omega), \bbeta\in C_{\bbeta}(\Omega),\\
			\gamma\ge 0\end{subarray}$}}\sup_{\bpi \in \bPi} \calL\left(\bpi,\gamma,\balpha, \bbeta\right) \\
	\nonumber &\geq \inf_{(\gamma,\balpha, \bbeta)\in\calS_{\nu_{1:d-1}}} \gamma\varepsilon +  \sum_{k=1}^{d-1}\int_{\Omega_k}\beta_k(x_k)\nu_k(dx_k)
	+ \int_{\Omega_d} F(x_d;\gamma,\balpha,\bbeta)\nu_d(dx_d)\\
	&\geq J_{\boldsymbol{\nu}}(\varepsilon), \label{strongdual_I>=J}
	\end{align}
	where in the last step we used the fact that 
	$
	    H(\gamma, \balpha, (\bbeta_{1:d-1}, F(\cdot ; \gamma, \balpha, \bbeta))) \leq 0
	$
	for all $\bx, \bx'\in\Omega$ when $(\gamma, \balpha, \bbeta) \in \calS_{\nu_{1:d-1}}(\varepsilon)$. Combining the above equation \eqref{strongdual_I>=J} with equation \eqref{proof.RHS}, we have $I_{\bnu}(\varepsilon) \geq  J_{\bnu}(\varepsilon)$. Since the weak duality ($I_{\bnu}(\varepsilon)\leq J_{\bnu}(\varepsilon)$) always hold, we get the desired strong duality result. Finally, it is straight forward to see that $\calM_{\boldsymbol{\nu}}(\varepsilon)$ is compact in $\bPi$ with respect to weak topology, thus there exist a primal optimizer $\pi^{\textrm{DRO}}\in\calM_{\boldsymbol{\nu}}(\varepsilon)$ for $I_{\boldsymbol{\nu}}(\varepsilon)$.
\end{proof}

\subsection{Proof of Proposition \ref{prop.cotinuity_epsilon}}\label{sect.proof_consistency}

\begin{proof}[Proof of Proposition \ref{prop.cotinuity_epsilon}]

	 First of all, note that $I_{\boldsymbol{\nu}}(\varepsilon) < \infty$ when  $\varepsilon\in (R_{\boldsymbol{\nu}}, +\infty)$. For any $\varepsilon_1,\varepsilon_2\in (R_{\boldsymbol{\nu}}, +\infty)$ and $t\in[0,1]$, we claim the following inequality holds
		\begin{equation}\label{eq.prop.stability_varepsilon_final}
		I_{\boldsymbol{\nu}}( t\varepsilon_1 + (1-t)\varepsilon_2) \geq tI_{\boldsymbol{\nu}}(\varepsilon_1) + (1-t)I_{\boldsymbol{\nu}}(\varepsilon_2).
		\end{equation}
		Pick any $\pi^{(1)}\in \calM_{\boldsymbol{\nu}}(\varepsilon_1)$ and  $\pi^{(2)}\in \calM_{\boldsymbol{\nu}}(\varepsilon_2)$.Then, there exist $\widetilde{\pi}^{(1)}, \widetilde{\pi}^{(2)}\in \Pi(\bnu)$, such that 
		\begin{equation*}
		\calW\left(\pi^{(1)}, \widetilde{\pi}^{(1)}\right)\leq\varepsilon_1, \quad  \calW\left(\pi^{(2)},\widetilde{\pi}^{(2)}\right)\leq\varepsilon_2.
		\end{equation*}
		Define a new probability measure $\pi^t := t\pi^{(1)} + (1-t)\pi^{(2)}$. Note that the probability measure $\widetilde{\pi}^t:= t\widetilde{\pi}^{(1)} + (1-t)\widetilde{\pi}^{(2)}\in \Pi(\bnu)$, by the dual representation of Wasserstein distance,  
		\begin{align*}
		    d\left(\pi^t, \Pi(\nu)\right)&\leq \sup_{f\in\textrm{Lip}_1}\left\lbrace \int fd\pi^t - \int f d\widetilde{\pi}^t\right\rbrace \\
		    &\leq t\sup_{f\in\textrm{Lip}_1}\left\lbrace \int fd\pi^{(1)} - \int f d\widetilde{\pi}^{(1)}\right\rbrace + (1-t)\sup_{f\in\textrm{Lip}_1}\left\lbrace \int fd\pi^{(2)} - \int f d\widetilde{\pi}^{(2)}\right\rbrace \\
		    &= t\calW\left(\pi^{(1)}, \widetilde{\pi}^{(1)}\right) + (1-t)\calW\left(\pi^{(2)}, \widetilde{\pi}^{(2)}\right)\\
		    &\leq t\varepsilon_1 + (1-t)\varepsilon_2.
		\end{align*}
		Next, we show that $\pi^t$ is a martingale measure. For any $1\leq k\leq d-1$, and any continuous function $\alpha_k(\bx_{1:k})\in C(\Omega_{1:k})$,
		\begin{align*}
		    & \EE_{\pi^t}\left[\alpha_k(\bX_{1:k})(X_{k+1}-X_k)\right] \\
		    =\quad &t\EE_{\pi^{(1)}}\left[\alpha_k(\bX_{1:k})(X_{k+1}-X_k)\right] + (1-t)\EE_{\pi^{(2)}}\left[\alpha_k(\bX_{1:k})(X_{k+1}-X_k)\right] \\
		    =\quad & 0.
		\end{align*}
		Hence, $\pi^t\in\calM(\Omega)$. To sum up, we have $\pi^t\in  \calM_{\boldsymbol{\nu}}(t\varepsilon_1 + (1-t)\varepsilon_2)$, and thus
		\begin{equation}\label{eq.prop.stability_varepsilon}
		I_{\boldsymbol{\nu}}( t\varepsilon_1 + (1-t)\varepsilon_2) \geq \EE_{\pi^t}[f(\bX)] = t\EE_{\pi^{(1)}}[f(\bX)] + (1-t) \EE_{\pi^{(2)}}[f(\bX)].
		\end{equation}
		Take the supremum of the RHS of \eqref{eq.prop.stability_varepsilon} over $\pi^{(1)}\in \calM_{\boldsymbol{\nu}}(\varepsilon_1)$ and  $\pi^{(2)}\in \calM_{\boldsymbol{\nu}}(\varepsilon_2)$, we prove the desired inequality in \eqref{eq.prop.stability_varepsilon_final}, and thus $I_{\boldsymbol{\nu}}( \varepsilon)$ is concave and continuous.
		
		Suppose now $\nu_1\preceq_c\cdots\preceq_c \nu_d$. It suffices to show that $I_{\boldsymbol{\nu}}(\varepsilon)$ is right continuous at 0. Take any sequence $\varepsilon_n\rightarrow0$, and $\pi^{n}\in\calM_{\boldsymbol{\nu}}(\varepsilon_n)$ that optimize $I_{\boldsymbol{\nu}}(\varepsilon_n)$. Observe that
		\begin{align*}
		\sup_{n\geq 1}\EE_{\pi^{n}}\|\bX\|_1 &= \sup_{n\geq1}\left\lbrace \sum_{i=1}^d \int|x|\pi_i^{n}(dx)\right\rbrace \\
		&\leq \sup_{n\geq1}\left\lbrace \sum_{i=1}^d \int|x|\nu_i(dx) + \sum_{i=1}^d\calW\left(\pi_i^{n},\nu_i\right)\right\rbrace\\
		&\leq \sup_{n\geq1}\left\lbrace \sum_{i=1}^d \int|x|\nu_i(dx) + d\left(\pi^n, \Pi(\bnu)\right)\right\rbrace\\
		&\leq \sum_{i=1}^dm_1(\nu_i) + \sup_{n\geq 1}\varepsilon_n <\infty.
		\end{align*}
	Hence, $\left\lbrace\pi^{(n)}\right\rbrace_{n\geq 1}$ is uniformly tight. Without loss of generality, we may assume $\pi^n$ converge weakly to a probability measure $\pi$. Then,
	\begin{equation*}
	    d\left(\pi, \Pi(\bnu)\right) \leq \calW(\pi, \pi^n) + d\left(\pi^n, \Pi(\bnu)\right) \rightarrow 0.
	\end{equation*}
	Thus, $\pi\in\Pi(\bnu)$. Moreover, for all $n\geq 1$, 
	\begin{equation*}
	\EE_{\pi^{n}}\left[\alpha_k\left(\bX_{1:k}\right)(X_{k+1}-X_k)\right] = 0
	\end{equation*}
	holds for all continuous function $\alpha_k(\cdot)\in C(\Omega_{1:k})$ ($1\leq k\leq d-1$). Follows from the Dominated Convergence Theorem, the above equations also hold for $\pi$, which implies $\pi\in\calM_{\bnu}(0)$. As a result,
	\begin{equation*}
	\limsup_{n\rightarrow\infty}I_{\boldsymbol{\nu}}(\varepsilon_n) = \limsup_{n\rightarrow\infty} \EE_{\pi^{n}}[f(\bX)] = \EE_{\pi}[f(\bX)]\leq I_{\boldsymbol{\nu}}(0).
	\end{equation*}
	The reverse inequality holds trivially from the definition.
\end{proof}

\section{Proof of Theorem \ref{thm.distance_n}}\label{sect.proof_theorem_distance_n}
We present the proof of Theorem \ref{thm.distance_n} in below. The key idea is to leverage the Wasserstein concentration of empirical measure and the dual formulation we get in Theorem \ref{thm.strongdual} and Corollary \ref{cor.dual_reformulation}. 
\begin{proof}[Proof of Theorem \ref{thm.distance_n}]
For simplicity, let us introduce the following notations:
\begin{align*}
    \bmu_0 &:= \left(\widehat{\mu}_1^{(n)},\widehat{\mu}_2^{(n)},\cdots,\widehat{\mu}_d^{(n)}\right),\\
    \bmu_d &:= (\mu_1,\cdots,\mu_d),\\
    \bmu_k &:= \left(\widehat{\mu}_{1}^{(n)},\cdots, \widehat{\mu}_{d-k}^{(n)}, \mu_{d-k+1},\cdots,\mu_d\right), \textrm{ for all } 1\leq k\leq d. 
\end{align*}
	By triangular inequality we have
	\begin{equation}\label{thm2.triangular}
	\left|I_{n}(\varepsilon) - I(\varepsilon)\right| = \left|I_{\boldsymbol{\mu}_0}(\varepsilon) - I_{\boldsymbol{\mu}_d}(\varepsilon)\right| \leq \sum_{k=0}^{d-1} \left|I_{\boldsymbol{\mu}_k}(\varepsilon) - I_{\boldsymbol{\mu}_{k+1}}(\varepsilon)\right|.
	\end{equation}
	Since for each $k$, $\boldsymbol{\mu}_k$ and $\boldsymbol{\mu}_{k+1}$ are only different from  one marginal, by the symmetricity that mentioned in Remark \ref{remark.symmetry}, we just need to bound one of the $d$ terms in \eqref{thm2.triangular}, say $\left| I_{\boldsymbol{\mu}_0}(\varepsilon) - I_{\boldsymbol{\mu}_1}(\varepsilon)\right|$.
	
	By Corollary \ref{cor.dual_reformulation} and Lemma  \ref{lemma.feasibility}, for all $n\geq N(\varepsilon,\delta)$, with probability at least $1-\delta$ we have
	\begin{equation*}\label{eq.I_mu0}
	I_{\boldsymbol{\mu}_0}(\varepsilon) =  \inf_{\left(  \gamma, \boldsymbol{\alpha},   \boldsymbol{\beta} \right) \in\calS_{\widehat{\bmu}^n_{1:d-1}}}\gamma\varepsilon + \int_{\Omega_d}
	F\left(x_d; \gamma, \balpha. \bbeta\right)\widehat{\mu}_d^{(n)}(dx_d).
	\end{equation*}
	Given $B>0$, define
	\begin{equation*}\label{eq.dual_of_I_leq_B}
	I_{\boldsymbol{\mu}_0}^{\leq B}(\varepsilon) := \inf_{\mbox{$\begin{subarray}{c}\left(  \gamma, \boldsymbol{\alpha},   \boldsymbol{\beta} \right) \in\calS_{\widehat{\bmu}^n_{1:d-1}},\\
			0\leq\gamma\leq B. \end{subarray}$}}\gamma\varepsilon + \int_{\Omega_d}
	F\left(x_d; \gamma, \balpha, \bbeta \right)\widehat\mu_d^{(n)}(dx_d)
	\end{equation*}
	and
	\begin{equation*}
	I_{\boldsymbol{\mu}_0}^{> B}(\varepsilon) := \inf_{\mbox{$\begin{subarray}{c}\left(  \gamma, \boldsymbol{\alpha},   \boldsymbol{\beta} \right) \in\calS_{\widehat{\bmu}^n_{1:d-1}},\\
			\gamma> B. \end{subarray}$}}\gamma\varepsilon + \int_{\Omega_d}
	F\left(x_d; \gamma, \balpha, \bbeta \right)\widehat{\mu}_d^{(n)}(dx_d).
	\end{equation*}
	It is straight forward to see that $I_{\boldsymbol{\mu}_0}^{\leq B}(\varepsilon)$ is upper bounded by $\sup_{\bX\in\Omega}f(\bX)$ (simply set all the parameters equal to zero).
	Moreover, by the definition of $F$ we see that
	\begin{equation*}
	\begin{aligned}
	I_{\boldsymbol{\mu}_0}^{> B}(\varepsilon) \geq \inf_{\mbox{$\begin{subarray}{c}\left(  \gamma, \boldsymbol{\alpha},   \boldsymbol{\beta} \right) \in\calS_{\widehat{\bmu}^n_{1:d-1}},\\
			\gamma> B \end{subarray}$}}\gamma\varepsilon + \int\left(\int\left( f\left(\bx'\right)- \sum_{k=1}^{d-1}\beta_k(x_k)-\gamma\sum_{k=1}^{d}|x_k-x_k'| + \right.\right. \\
	\left.\left. \sum_{k=1}^{d-1}\alpha_k(\bx_{1:k}')(x_{k+1}'-x_k')\right)\bpi\left(d\bx_{1:d-1}, d\bx' \mid x_d\right)\right)\widehat{\mu}_d^{(n)}(dx_d)
	\end{aligned}
	\end{equation*}
	holds for every conditional probability measure $\bpi(d\bx_{1:d-1},d\bx'\mid x_d)$. By the theory in the standard Martingale Optimal Transport, there exists an martingale measure $\pi\in\Pi(\bmu)$ that optimize $I(0)$. In particular, we can pick the $\bpi(d\bx_{1:d-1}, d\bx' \mid x_d)$ such that its projection on $\bx'$ is $\pi$, and its projection on $\bx_{1:d-1}$ is a $(d-1)$- dimensional probability measure with marginals $\widehat{\mu}_1^{(n)},\cdots,\widehat{\mu}_{d-1}^{(n)}$. Taking supremum over possible $\bpi$'s ($\mu_i$ and $\widehat{\mu}_i^{(n)}$ will be coupled optimally, for all $1\leq i\leq d$), we have
	\begingroup
	\allowdisplaybreaks
	\begin{align*}
	I_{\boldsymbol{\mu}_0}^{> B}(f;\varepsilon) &\geq \inf_{\mbox{$\begin{subarray}{c}\left(  \gamma, \boldsymbol{\alpha},   \boldsymbol{\beta} \right) \in\calS_{\widehat{\bmu}^n_{1:d-1}},\\
			\gamma> B \end{subarray}$}}\gamma\varepsilon + \EE_{\pi}\left[f(\bX')\right] - \sum_{k=1}^{d-1}\int_{\Omega_k}\beta_k(x_k)\widehat{\mu}_k^{(n)}(dx_k) + \\
	& \qquad\qquad\qquad\qquad\qquad \sum_{k=1}^{d-1}\EE_{\pi}\left[\alpha_k(\bX_{1:k}')(X_{k+1}'-X_k')\right] -\gamma\sum_{k=1}^d\calW\left(\mu_k,\widehat{\mu}_k^{(n)}\right)\\
	&\geq \inf_{\mbox{$\begin{subarray}{c}\left(  \gamma, \boldsymbol{\alpha},   \boldsymbol{\beta} \right) \in\calS_{\widehat{\bmu}^n_{1:d-1}},\\
			\gamma> B \end{subarray}$}}\gamma\varepsilon + \EE_{\pi}\left[f(\bX')\right] - \gamma\sum_{k=1}^d\calW\left(\mu_k,\widehat{\mu}_k^{(n)}\right)\\
	&> \EE_{\pi}\left[f(\bX')\right] + B\left(\varepsilon - \sum_{k=1}^d\calW\left(\mu_k,\widehat{\mu}_k^{(n)}\right)\right),
	\end{align*}
	\endgroup
	where the second inequality have used the fact that $\pi$ is a martingale measure.
	By Lemma \ref{lemma.feasibility}, for $n\geq N(\frac{\varepsilon}{2}, \frac{\delta}{2d})$, we see that with probability $1 - \frac{\delta}{2d}$,
	\begin{equation}\label{ieq.proof_thm_distance_n_concentration}
	\sum_{k=1}^d\calW\left(\mu_k,\widehat{\mu}_k^{(n)}\right) \leq \frac{\varepsilon}{2}.
	\end{equation}
	Taking $B = \frac{4D}{\varepsilon} \geq \frac{4}{\varepsilon}\sup_{\bX\in\Omega}\left|f(\bX)\right|$, we have
	\begin{align*}
	I_{\boldsymbol{\mu}_0}^{> B}(\varepsilon) > \EE_{\pi}f(\bX') + \frac{4}{\varepsilon}\sup_{\bX\in\Omega}\left|f(\bX)\right|\cdot\frac{\varepsilon}{2}>\sup_{\bX\in\Omega}f(\bX)\geq I_{\boldsymbol{\mu}_0}^{\leq B}(\varepsilon).
	\end{align*}
	Hence we have $\gamma$ is bounded by $B$ with high probability. Precisely, we have
	\begin{equation*}
	I_{\boldsymbol{\mu}_0}(\varepsilon) = I_{\boldsymbol{\mu}_0}^{\leq B}(\varepsilon).
	\end{equation*}
	holds with probability at least $1 - \frac{\delta}{2d}$ for $n\geq N(\frac{\varepsilon}{2}, \frac{\delta}{2d})$. Similar arguments can be applied to $I_{\boldsymbol{\mu}_1}(\varepsilon)$ and we get
	\begin{equation*}
	I_{\boldsymbol{\mu}_1}(\varepsilon) =  I_{\boldsymbol{\mu}_1}^{\leq B}(\varepsilon).
	\end{equation*}
	holds with probability at least $1-\frac{\delta}{2d}$ for $n\geq N\left(\frac{\varepsilon}{2},\frac{\delta}{2d}\right)$. By definition, for any $\delta >0$, there exists $\left(\gamma_{\delta}, \balpha_{\delta},\bbeta_{\delta} \right)\in\calS_{\widehat{\bmu}^n_{1:d-1}}(\varepsilon)$ such that
	\begin{equation*}
	\gamma_{\delta}\varepsilon + \int_{\Omega_d}
	F\left(x_d; \gamma_{\delta}, \balpha_{\delta}, \bbeta_{\delta}\right)\widehat\mu_d^{(n)}(dx_d)\leq I_{\boldsymbol{\mu}_0}(\varepsilon) + \delta.
	\end{equation*}
	Note that $F(x_d;\gamma, \balpha, \bbeta)$ is a $\gamma$-Lipschitz function for fixed $\gamma$, by the dual representation of Wasserstein distance,
	\begin{align*}
	I_{\boldsymbol{\mu}_1}(\varepsilon) - I_{\boldsymbol{\mu}_0}(\varepsilon) & = I_{\boldsymbol{\mu}_1}^{\leq B}(\varepsilon) - I_{\boldsymbol{\mu}_0}^{\leq B}(\varepsilon)\\
	&\leq \gamma_{\delta}\varepsilon + \int_{\Omega_d}
	F\left(x_d; \gamma_{\delta}, \balpha_{\delta}, \bbeta_{\delta}\right)\mu_d(dx_d) -\left(\gamma_{\delta}\varepsilon + \int_{\Omega_d}
	F\left(x_d; \gamma_{\delta}, \balpha_{\delta}, \bbeta_{\delta}\right)\widehat\mu_d^{(n)}(dx_d) - \delta\right)\\
	&\leq B\cdot\calW\left(\mu_d,\widehat{\mu}_d^{(n)}\right) + \delta.
	\end{align*}
	Similarly, we have
	$I_{\boldsymbol{\mu}_0}(\varepsilon) - I_{\boldsymbol{\mu}_1}(\varepsilon)\leq B\cdot\calW\left(\mu_d,\widehat{\mu}_d^{(n)}\right) + \delta$. Thus, $\delta \rightarrow 0$ yields that
	\begin{equation*}
	\left|I_{\boldsymbol{\mu}_0}(\varepsilon) - I_{\boldsymbol{\mu}_1}(\varepsilon) \right|\leq B\cdot\calW\left(\mu_d,\widehat{\mu}_d^{(n)}\right).
	\end{equation*}
	Recall from Lemma \ref{lemma.feasibility} that  $\calW\left(\mu_d,\widehat{\mu}_d^{(n)}\right)\leq\sqrt{\frac{\log(2C_1d/\delta)}{C_2n}}$ with probability at least $1-\frac{\delta}{2d}$. Then, the union bound yields
	\begin{equation}\label{eq.thm.stability_bound}
	\left|I_{\boldsymbol{\mu}_0}(\varepsilon) - I_{\boldsymbol{\mu}_1}(\varepsilon) \right|\leq B\sqrt{\frac{\log(2C_1d/\delta)}{C_2n}}
	\end{equation}
	holds with probability at least $1-\frac{\delta}{d}$ when $n\geq N(\frac{\varepsilon}{2}, \frac{\delta}{2d})$. Finally, similar arguments works for all $\left|I_{\boldsymbol{\mu}_k}(\varepsilon) - I_{\boldsymbol{\mu}_{k+1}}(\varepsilon) \right|$ and give the same bound as \eqref{eq.thm.stability_bound}.
	Thus, by union bound, with probability at least $1-\delta$ we have
	\begin{equation*}
	\left|I_n(\varepsilon) - I(\varepsilon)\right|\le Bd\sqrt{\frac{\log\left(2C_1d/\delta\right)}{C_2n}} = \frac{4Dd}{\varepsilon} \sqrt{\frac{\log\left(2C_1d/\delta\right)}{C_2n}}
	\end{equation*}
	holds for $n\geq N(\frac{\varepsilon}{2}, \frac{\delta}{2d})$, which is the desired result.
\end{proof}
Finally, we sketch the proof of Theorem \ref{thm.distance_n_general}. The proof is essentially the same as the proof of Theorem \ref{thm.distance_n}, while we need to consider a concentration result in general dimensions.
\begin{proof}[Proof of Theorem \ref{thm.distance_n_general}]
Firstly, assuming compact support, it is straight forward to check that the parallel results of Theorem \ref{thm.strongdual} and Corollary \ref{cor.dual_reformulation} hold when we lift each marginals to $k$-dimensional space. Next, by Theorem 2 in \cite{fournier2015rate}, we have for any $n\geq \frac{2^kd^k}{C_2'\varepsilon^k} \log\left(\frac{2d^2C_1'}{\delta}\right)$ that 
\begin{equation*}
	\PP\left(\sum_{i=1}^d\calW(\widehat{\mu}_i^{(n)},\mu_i)>\frac{\varepsilon}{2}\right)\leq\sum_{i=1}^d\PP\left(\calW(\widehat{\mu}_i^{(n)},\mu_i)>\frac{\varepsilon}{2d}\right)\leq dC_1'e^{-C_2'n\varepsilon^k/(2d)^k}<\frac{\delta}{2d}.
	\end{equation*}
Thus, with probability at least $1-\frac{\delta}{2d}$,
\begin{equation}\label{ieq.proof_distance_n_general_concentration}
    \sum_{i=1}^d\calW\left(\mu_i,\widehat\mu_i^{(n)}\right)\leq\frac{\varepsilon}{2}.
\end{equation}
We can replace \eqref{ieq.proof_thm_distance_n_concentration} in the proof of Theorem \ref{thm.distance_n} by \eqref{ieq.proof_distance_n_general_concentration}, and repeat the same arguments to get the desired upper bound in Theorem \ref{thm.distance_n_general}.
\end{proof}

\subsection{Proof of Technical Result Supporting Theorem \ref{thm.distance_n}} \label{sect.proof_feasiblity}

In this section we present the proof of Lemma \ref{lemma.feasibility}.

\begin{proof}[Proof of Lemma \ref{lemma.feasibility}]
	$I_{n}(\varepsilon)$ is feasible if and only if $\calM_{n}(\varepsilon)\neq \varnothing$. 
	Due to the compactness assumption, each marginal measure $\mu_i$ has a finite square-exponential moment, that is, $\sup_{1\leq i\leq d}\EE_{\mu_i}[e^{\gamma |X|^2}] < \infty$ for some $\gamma >0$. Thus, we can apply Theorem 2 in \cite{fournier2015rate} to conclude: there exist universal constants $C_1, C_2 >0$, such that 
	\begin{equation*}\label{eq.concentrateEm}
	\PP\left(\sum_{i=1}^d\calW(\widehat{\mu}_i^{(n)},\mu_i)>\varepsilon\right)\leq\sum_{i=1}^d\PP\left(\calW(\widehat{\mu}_i^{(n)},\mu_i)>\frac{\varepsilon}{d}\right)\leq dC_1e^{-C_2n\varepsilon^2/d^2}<\delta
	\end{equation*}
	holds for $n\geq N(\varepsilon,\delta):= \frac{d^{2}}{C_{2}}\log\left(\frac{d C_{1}}{\delta}\right)\varepsilon^{-2}$. Hence, with probability at least $1-\delta$, we have 
	\begin{equation*}
	    \sum_{i=1}^d\calW\left(\widehat{\mu}_i^{(n)},\mu_i\right)\leq\varepsilon.
	\end{equation*}
	Let $\pi^*\in\Pi(\bmu)$ be the optimal martingale measure for the MOT problem \eqref{def.standard_MOT}. Suppose that for any $1\leq i\leq d$, $\widetilde{\pi}_i\in \Pi\left(\widehat{\mu}_i^{(n)},\mu_i\right)$ is the coupling such that that $\EE_{(X,X')\sim \widetilde{\pi}_i} |X-X'| = \calW \left(\widehat{\mu}_i^{(n)},\mu_i\right)$. Then,  
	their exists a probability measure $\pi'\in\Pi(\widehat{\bmu}^n)$ and a joint measure $\bpi\in\Pi(\pi^*, \pi')$, such that $\pi^*_i$ and $\pi'_i$ are coupled exactly by $\widetilde{\pi}_i$ for all $1\leq i\leq d$. Thus,
	\begin{equation*}
	    d\left(\pi^*, \Pi(\widehat{\bmu}^n)\right) \leq \calW(\pi^*, \pi') \leq \EE_{\bpi}\|\bX-\bX'\|_1=\sum_{i=1}^d\EE_{\bpi}|X_i-X_i'|=\sum_{i=1}^d\calW\left(\widehat{\mu}_i^{(n)},\mu_i\right)\leq \varepsilon.
	\end{equation*}
	In other words, $\pi^*\in\calM_n(\varepsilon)$ with probability at least $1-\delta$, which yields the desired result.
\end{proof}

\section{Proof of Theorem \ref{thm.discretize_approximation}}\label{sect.proof_thm_discrete_approx}

In this section, we give the full proof of Theorem \ref{thm.discretize_approximation}.

\begin{proof}[Proof of Theorem \ref{thm.discretize_approximation}]
	We first show that  $\calM_{n, N}\left(\varepsilon + \frac{ld^{1/2}}{N}; \frac{l}{N}\right)\neq \varnothing$ with probability at least $1-\delta$ for $n\geq N(\varepsilon,\delta)$, which ensure the optimization problems in LHS of \eqref{eq.discretize_approximation} are feasible.
	
	For any $\pi \in\calP_n(\varepsilon)$, by Lemma \ref{lemma.discretize} there exists a $l/N-$martingale measure $\pi'\in\calP\left(\Omega^{(N)}\right)$ with $\calW(\pi',\pi)\leq\frac{l\sqrt{d}}{N}$. Thus,
	\begin{equation*}
	    d\left(\pi', \Pi(\widehat{\bmu}^n)\right) \leq \calW(\pi', \pi) + d\left(\pi, \Pi(\widehat{\bmu}^n)\right) \leq \varepsilon + \frac{ld^{1/2}}{N}.
	\end{equation*}
	In other words, we have $\pi'\in\calM_{n, N}\left(\varepsilon + \frac{ld^{1/2}}{N}; \frac{l}{N}\right)\neq\varnothing$. Furthermore,
	\begin{equation*}
	\begin{aligned}
	\EE_{\pi}\left[f(\bX)\right] - I_{n,N}\left(\varepsilon + \frac{ld^{1/2}}{N},  \frac{l}{N}\right) &\leq \EE_{\pi}\left[f(\bX)\right] - \EE_{\pi'}\left[f(\bX)\right] \\
	&\leq L \calW(\pi',\pi)\leq\frac{Lld^{1/2}}{N}.
	\end{aligned}
	\end{equation*}
	Taking the supremum over $\pi \in\calM_n(\varepsilon)$ yields
	\begin{equation}\label{eq.RHS_discretize_approximation}
	I_n(\varepsilon) -  I^{(N)}_{n, N}\left(\varepsilon + \frac{ld^{1/2}}{N},  \frac{l}{N}\right)  \leq \frac{Lld^{1/2}}{N}.
	\end{equation}
	Next, we lower bound the LHS of \eqref{eq.RHS_discretize_approximation}. Recall from Corollary \ref{cor.dual_reformulation} and Theorem \ref{thm.distance_n} that
	\begin{equation*}
	I_n(\varepsilon) =
	\inf_{\mbox{$\begin{subarray}{c} \left(  \gamma, \boldsymbol{\alpha},   \boldsymbol{\beta} \right) \in\calS_{\widehat{\bmu}^n_{1:d-1}},\\
			\gamma \leq \frac{4D}{\varepsilon}\end{subarray}$}}\gamma\varepsilon + \int_{\Omega_d}
	F\left(x_d;\gamma. \balpha. \bbeta\right)\widehat{\mu}_d^{(n)}(dx_d)
	\end{equation*}
	holds with probability at least $1-\delta$ for $n\geq N\left(\varepsilon/2, \delta \right)$, where 
	\begin{equation}\label{eq.thm.discretize_F(x_d)}
    \begin{aligned} F\left(x_d; \gamma, \boldsymbol{\alpha} ,\boldsymbol{\beta}\right) : = \sup_{ \bx_{1:d-1}\in\Omega_{1:d-1},   \mathbf{x}'\in\Omega}\left\lbrace f\left(\mathbf{x}'\right)- \sum_{k=1}^{d-1}\beta_k(x_k)-\gamma\sum_{k=1}^{d}|x_k-x_k'| + \right. \\ \left. \sum_{k=1}^{d-1}\alpha_k(\mathbf{x}_{1:k}')(x_{k+1}'-x_k')\right\rbrace.
    \end{aligned}   
    \end{equation}
	Note that
	\begin{align*}
	F(x_d;\gamma. \balpha. \bbeta)&\geq \sup_{\mbox{$\begin{subarray}{c} \bx_{1:d-1}\in\Omega_{1:d-1},   \mathbf{x}'\in\Omega\end{subarray}$}}\left\lbrace f\left(\bx'\right)- \sum_{k=1}^{d-1}\beta_k(x_k)+ \sum_{k=1}^{d-1}\alpha_k(\bx_{1:k}')(x_{k+1}'-x_k')\right\rbrace - \gamma \sum_{i=1}^dl_i\\
	&= \sup_{\bx'\in\Omega}\left\lbrace  \sum_{k=1}^{d-1}\alpha_k(\bx_{1:k}')(x_{k+1}'-x_k')\right\rbrace + \inf_{\bx'\in\Omega}f\left(\bx'\right) - \sum_{k=1}^{d-1}\inf_{x_k\in\Omega_k}\beta_k(x_k) - \gamma ld\\
	&\geq \sup_{\bx'\in\Omega}\left\lbrace  \sum_{k=1}^{d-1}\alpha_k(\bx_{1:k}')(x_{k+1}'-x_k')\right\rbrace + \inf_{\bx'\in\Omega}f\left(\bx'\right) - \gamma ld.
	\end{align*}
	In the last step we've used the fact that  $\inf_{x_k\in\Omega_k}\beta_k(x_k)\leq\int_{\Omega_k}\beta_k(x_k)\widehat\mu^{(n)}_k(dx_k) = 0$ $(1\leq k\leq d)$ for $\beta_{k}$'s in $\calS_{\widehat{\bmu}^n_{1:d-1}}$. By letting $x_i' = x_2'$ for all $2\leq i\leq d$, we have
	\begin{align*}
	F(x_d;\gamma, \balpha, \bbeta)\geq \sup_{(x_1', x_2')\in\Omega_1\times\Omega_2}\left\lbrace \alpha_1(x_1')(x_2' - x_1')\right\rbrace +  \inf_{\bx'\in\Omega}f\left(\bx'\right) - \gamma ld.
	\end{align*}
	Suppose that $\|\alpha_1\|_{\infty}:=\sup_{x_1'\in\Omega_1}|\alpha_1(x_1')|> B_1$ for some positive number $B_1$. If there exists a $x_1^*\in\Omega_1$ such that $\alpha_1(x_1^*)\geq B_1$, by Assumption \ref{aspn.disperse} we get
	\begin{equation*}
	F(x_d;\gamma, \balpha, \bbeta)\geq \alpha_1(x_1^*)(b_2 - x_1^*) + \inf_{\bx'\in\Omega}f\left(\bx'\right) - \gamma ld> \tau B_1 + \inf_{\bx'\in\Omega}f\left(\bx'\right) - \gamma ld.
	\end{equation*}
	Similarly, if there exits an $x_1^*\in\Omega_1$ such that $\alpha_1(x_1^*)\leq -B_1$, then
	\begin{equation*}
	F(x_d;\gamma, \balpha, \bbeta)\geq \alpha_1(x_1^*)(a_2 - x_1^*) + \inf_{\bx'\in\Omega}f\left(\bx'\right) - \gamma ld> \tau B_1+ \inf_{\bx'\in\Omega}f\left(\bx'\right) - \gamma ld.
	\end{equation*}
	Combining with the bound of $\gamma$ shown in Theorem \ref{thm.distance_n}, we have
	\begin{align*}
	\inf_{\mbox{$\begin{subarray}{c} \left(  \gamma, \boldsymbol{\alpha},   \boldsymbol{\beta} \right) \in\calS_{\widehat{\bmu}^n_{1:d-1}},\\
			\gamma \leq \frac{4D}{\varepsilon}, \|\alpha\|_{\infty}\geq B_1\end{subarray}$}}\gamma\varepsilon + \int_{\Omega_d}
	F\left(x_d;\gamma, \balpha, \bbeta\right)\widehat{\mu}_d^{(n)}(dx_d) &\geq \gamma\varepsilon + \int_{\Omega_d}\left[\tau B_1+ \inf_{\bx'\in\Omega}f\left(\bx'\right) - \gamma dl\right]\widehat{\mu}_d^{(n)}(dx_d)\\
	&\geq \tau B_1+ \inf_{\bx'\in\Omega}f\left(\bx'\right) -  \frac{4}{\varepsilon}\sup_{\bx\in\Omega}f(\bx)\cdot ld\\
	&> \sup_{\bx\in\Omega}f(\bx)\\
	&\geq I_n(\varepsilon).
	\end{align*}
	where $B_1 = \frac{\left[\frac{4ld}{\varepsilon} + 2\right]D}{\tau} \geq \frac{\left[\frac{4ld}{\varepsilon} + 2\right]\sup_{\bx\in\Omega}|f(\bx)|}{\tau}$.
	As a result,
	\begin{equation*}
	I_n(\varepsilon) = \inf_{\mbox{$\begin{subarray}{c} \left(  \gamma, \boldsymbol{\alpha},   \boldsymbol{\beta} \right) \in\calS_{\widehat{\bmu}^n_{1:d-1}},\\
			\gamma \leq \frac{4D}{\varepsilon}, \|\alpha\|_{\infty}\leq B_1\end{subarray}$}}\gamma\varepsilon + \int_{\Omega_d}
	F\left(x_d;\gamma,\alpha, \bbeta\right)\widehat{\mu}_d^{(n)}(dx_d).
	\end{equation*}
	Next, we use an induction argument to show that each $\alpha_i$ has bounded supremum norm. Suppose we have
	\begin{equation*}
	I_n(\varepsilon) = \inf_{\mbox{$\begin{subarray}{c} \left(  \gamma, \boldsymbol{\alpha},   \boldsymbol{\beta} \right) \in\calS_{\widehat{\bmu}^n_{1:d-1}},\\
			\gamma \leq \frac{4D}{\varepsilon}, \|\alpha_i\|_{\infty}\leq B_i, (1\leq i\leq m) \end{subarray}$}}\gamma\varepsilon + \int_{\Omega_d}
	F\left(x_d\right)\widehat{\mu}_d^{(n)}(dx_d)
	\end{equation*}
	holds for some $1\leq m\leq d-2$. Conditioning on  $\gamma \leq \frac{4D}{\varepsilon}$ and $\|\alpha_i\|_{\infty}\leq B_i, 1\leq i\leq m$, we have
	\begin{align*}
	F(x_d;\gamma, \balpha, \bbeta)&\geq \sup_{\bx'\in\Omega}\left\lbrace  \sum_{k=1}^{d-1}\alpha_k(\bx_{1:k}')(x_{k+1}'-x_k')\right\rbrace + \inf_{\bx'\in\Omega}f\left(\bx'\right) - \gamma ld\\
	&\geq \sup_{\bx'\in\Omega}\left\lbrace \sum_{k=m+1}^{d-1}\alpha_k(\bx_{1:k}')(x_{k+1}'-x_k')\right\rbrace -\sum_{k=1}^m B_il_i + \inf_{\bx'\in\Omega}f\left(\bx'\right) - \gamma ld.
	\end{align*}
	Taking $x_k' = x_{m+2}$ for all $k\geq m+2$, conditioning on $\|\alpha_{m+1}\|_{\infty}> B_{m+1}$, we can lower bounded $F(x_d;\gamma, \balpha, \bbeta)$ by
	\begin{align*}
	& F(x_d;\gamma, \balpha, \bbeta)\\
	&\geq \sup_{(x_{m+1}', x_{m+2}')\in\Omega_{m+1}\times\Omega_{m+2}}\left\lbrace \alpha_{m+1}\left(\bx'_{1:m+1}\right)(x'_{m+2}-x'_{m+1})\right\rbrace - l\sum_{k=1}^mB_i+\inf_{\bx'\in\Omega}f\left(\bx'\right) - \gamma ld\\
	&\geq \inf_{\bx'\in\Omega}f\left(\bx'\right) - \frac{4Dld}{\varepsilon} + \tau B_{m+1}  - l\sum_{k=1}^mB_i.
	\end{align*}
	Hence
	{\small
		\begin{align*}
		\inf_{\mbox{$\begin{subarray}{c} \left(  \gamma, \boldsymbol{\alpha},   \boldsymbol{\beta} \right) \in\calS_{\widehat{\bmu}^n_{1:d-1}}, \gamma \leq \frac{4D}{\varepsilon}\\
				\|\alpha_i\|_{\infty}\leq B_i, 1\leq i\leq m, \|\alpha_{m+1}\|_{\infty}> B_{m+1}. \end{subarray}$}}\gamma\varepsilon + \int_{\Omega_d}
		F\left(x_d;\gamma, \balpha, \bbeta\right)\widehat{\mu}_d^{(n)}(dx_d) &\geq  \inf_{\bx'\in\Omega}f\left(\bx'\right) - \frac{4Dld}{\varepsilon} + \tau B_{m+1}  - l\sum_{k=1}^mB_i\\
		&> \sup_{\bx\in\Omega}f(\bx)\geq I_n(\varepsilon),
		\end{align*}
	}
	where
	\begin{equation}\label{eq.iteration_Bi}
	B_{m+1}:= \frac{\left[\frac{4ld}{\varepsilon} + 2\right]D}{\tau}+\frac{l}{\tau}\sum_{k=1}^mB_k = B_1 + \frac{l}{\tau}\sum_{k=1}^mB_k.
	\end{equation}
	Now, we come to the following result,
	\begin{equation*}
	I_n(\varepsilon) = \inf_{\mbox{$\begin{subarray}{c} \left(  \gamma, \boldsymbol{\alpha},   \boldsymbol{\beta} \right) \in\calS_{\widehat{\bmu}^n_{1:d-1}},\\
			\gamma\leq\frac{4D}{\varepsilon}, \|\alpha_i\|_{\infty}\leq B_i, (1\leq i\leq m+1)\end{subarray}$}}\gamma\varepsilon + \int_{\Omega_d}
	F\left(x_d;\gamma, \balpha, \bbeta\right)\widehat{\mu}_d^{(n)}(dx_d),
	\end{equation*}
	which completes the induction. Moreover, note that \eqref{eq.iteration_Bi} holds for all $1\leq m\leq d-1$, we have $B_{m}\leq \left(1+\frac{l}{\tau}\right)^{m-1}B_1$ via a simple induction argument. To sum up, we have 
	\begin{equation}\label{eq.In_B1_to_B_d}
	I_n(\varepsilon) = \inf_{\mbox{$\begin{subarray}{c} \left(  \gamma, \boldsymbol{\alpha},   \boldsymbol{\beta} \right) \in\calS_{\widehat{\bmu}^n_{1:d-1}},\\
			\gamma\leq\frac{4D}{\varepsilon}, \|\alpha_i\|_{\infty}\leq B_i, (1\leq i\leq d)\end{subarray}$}}\gamma\varepsilon + \int_{\Omega_d}
	F\left(x_d;\gamma, \balpha, \bbeta\right)\widehat{\mu}_d^{(n)}(dx_d).
	\end{equation}
	Now we are able to give a lower bound of the LHS of \eqref{eq.RHS_discretize_approximation}. Define
	\begin{equation*}
    I_{n}(\varepsilon,\delta) := \sup_{\pi\in\mathcal{\calM}_{n}(\varepsilon,\delta
)}\mathbb{E}_{\pi}\left[  f(\bX)\right],
\end{equation*}
where 
\begin{equation*}
\mathcal{M}_{n}( \varepsilon, \delta) :=
\left\lbrace \pi\in\mathcal{M}\left(\Omega
;\delta\right)  :
d\left(\pi, \Pi\left(\boldsymbol{\widehat\mu}^{n}\right)\right)\leq\varepsilon\right\rbrace.
\end{equation*}
Directly from Theorem \ref{thm.strongdual}, one can show the strong duality also holds for  $I_n\left(\varepsilon + \frac{ld^{1/2}}{N},  \frac{l}{N}\right)$. Since the proof can be obtained by simply repeating the arguments in the proof of Theorem \ref{thm.strongdual}, we omit the details and just state the result as following
	\begin{equation*}
	I_n\left(\varepsilon + \frac{ld^{1/2}}{N},  \frac{l}{N}\right) = \inf_{\left(  \gamma, \boldsymbol{\alpha},   \boldsymbol{\beta} \right) \in\calS_{\widehat{\bmu}^n_{1:d-1}}}\gamma\left(\varepsilon + \frac{ld^{3/2}}{N}\right)+ \int_{\Omega_d}
	\widetilde{F}\left(x_d;\gamma, \balpha, \bbeta\right)\widehat{\mu}_d^{(n)}(dx_d),
	\end{equation*}
	where
	\begin{equation*}
	\begin{aligned}
	\widetilde{F}\left(x_d;\gamma, \balpha, \bbeta\right) : = \sup_{\mbox{$\begin{subarray}{c} \bx_{1:d-1}\in\Omega_{1:d-1}, \bx'\in\Omega\end{subarray}$}}\left\lbrace f\left(\bx'\right) + \sum_{k=1}^{d-1}\frac{l}{N}|\alpha_k(\bx'_{1:k})| + \sum_{k=1}^{d-1}\alpha_k(\bx_{1:k}')(x_{k+1}'-x_k') \right.\\
	\left. + \sum_{k=1}^{d-1}\beta_k(x_k)-\gamma\sum_{k=1}^{d}|x_k-x_k'| \right\rbrace.
	\end{aligned}
	\end{equation*}
	By equation \eqref{eq.In_B1_to_B_d}, for any $\delta >0$, we can pick
	\begin{equation*}
	\left(\gamma_{\delta},\balpha_{\delta}, \bbeta_{\delta}\right)\in\calS_{\widehat{\bmu}^n_{1:d-1}}\cap\left\lbrace \gamma \leq \frac{4D}{\varepsilon},\quad  \|\alpha_i\|_{\infty}\leq B_i,    (1\leq i\leq d-1)\right\rbrace,
    \end{equation*}
	such that
	\begin{equation*}
	\gamma_{\delta}\varepsilon + \int_{\Omega_d}
	F\left(x_d;\gamma_{\delta}, \balpha_{\delta}, \bbeta_{\delta}\right)\widehat\mu_d^{(n)}(dx_d)\leq I_n( \varepsilon)  + \delta.
	\end{equation*}
	Thus, 
	\begin{align*}
	& I_{n,N}\left(\varepsilon + \frac{ld^{1/2}}{N},  \frac{l}{N}\right)  - I_n(\varepsilon)\\
	\leq \quad &I_n\left(\varepsilon + \frac{ld^{1/2}}{N},  \frac{l}{N}\right)  - I_n(\varepsilon) \\
	\leq \quad& \gamma_{\delta}\left(\varepsilon + \frac{ld^{3/2}}{N}\right)+ \int_{\Omega_d} \widetilde{F}(x_d;\gamma_{\delta}, \balpha_{\delta}, \bbeta_{\delta})\widehat{\mu}_d^{(n)}(dx_d) - \left(\gamma_{\delta}\varepsilon + \int_{\Omega_d}
	F\left(x_d;\gamma_{\delta}, \balpha_{\delta}, \bbeta_{\delta}\right)\widehat{\mu}_d^{(n)}(dx_d)\right) + \delta\\
	\leq \quad& \frac{4D}{\varepsilon}\cdot \frac{ld^{1/2}}{N} + \int_{\Omega_d}\left(\widetilde{F}\left(x_d;\gamma_{\delta}, \balpha_{\delta}, \bbeta_{\delta}\right) - F\left(x_d;\gamma_{\delta}, \balpha_{\delta}, \bbeta_{\delta}\right)\right)\widehat{\mu}_d^{(n)}(dx_d) + \delta\\
	\leq \quad& \frac{4D}{\varepsilon}\cdot \frac{ld^{1/2}}{N} + \frac{l}{N}\sum_{i=1}^{d-1}B_i + \delta\\
	\leq \quad& \frac{1}{N}\left[\frac{4Dld^{1/2}}{\varepsilon} + \left(1+\frac{l}{\tau}\right)^{d-1}\left(\frac{4dl}{\varepsilon} + 2\right)D\right] + \delta.
	\end{align*}
	Sending $\delta\rightarrow 0$, and combining with \eqref{eq.RHS_discretize_approximation}, we have
	\begin{equation*}
	\left|I_n(\varepsilon) - I_{n, N}\left(\varepsilon + \frac{ld^{1/2}}{N},  \frac{l}{N}\right)\right|\leq\frac{C(f,\varepsilon, d, \Omega)}{N},
	\end{equation*}
	where 
	\begin{equation*}
	 C(f,\varepsilon, d, \Omega):= \max\left\lbrace \left[\frac{4ld^{1/2}}{\varepsilon} + \left(1+\frac{l}{\tau}\right)^{d-1}\left(\frac{4ld}{\varepsilon} + 2\right)\right]D, Lld^{1/2}\right\rbrace.   
	\end{equation*}

\end{proof}

\subsection{Proof of Technical Results Supporting Theorem
\ref{thm.discretize_approximation}} \label{sect.proof_discretize_lemma}

In this section, we provide the full proof of Lemma \ref{lemma.discretize}.

\begin{proof}[Proof of Lemma \ref{lemma.discretize}]
	Let us  introduce some useful notations. For any $1\leq i\leq d$, $k_i\in \ZZ$, we define
	\begin{equation*}
	I_{i,k_i}^{(-1)} := \left[a_i + \frac{(k_i - 1)l_i}{N}, a_i + \frac{k_il_i}{N}\right], \qquad I_{i,k_i}^{(+1)} := \left[a_i + \frac{k_il_i}{N}, a_i + \frac{(k_i + 1)l_i}{N}\right].
	\end{equation*}
	And
	\begin{equation*}
	g_{i,k_i}^{(-1)}(x_i):= \frac{1}{l_i}\left(Nx_i - Na_i - (k_i - 1)l_i\right), \qquad g_{i,k_i}^{(+1)}(x_i) := \frac{1}{l_i}\left(Na_i + (k_i + 1)l_i - Nx_i\right).
	\end{equation*}
	Note that $g_{i,k_i}^{(-1)}(x_i)$ and $g_{i,k_i}^{(+1)}(x_i)$ are non-negative, and we also have $g_{i,k_i}^{(-1)}(x_i) + g_{i,k_i-1}^{(+1)}(x_i) =1$. Let $\pi$ be a martingale probability measure on $\Omega$, we define a probability measure $\pi^{(N)}$ supported on $\Omega$ as follows:
	\begin{equation}\label{eq.discrete_pi}
	\pi^{(N)}\left(a_1+\frac{k_1l_1}{N},\cdots, a_d+\frac{k_dl_d}{N}\right) := \sum_{(t_1,\cdots, t_d)\in\left\lbrace -1, +1\right\rbrace^d}\int_{\prod\limits_{i=1}^dI_{i,k_i}^{(t_i)}}\prod\limits_{i=1}^dg_{i,k_i}^{(t_i)}(x_i)\pi(d\bx_{1:d}), 
	\end{equation}
	for all $0\leq k_i\leq N$, $1\leq i\leq d$. We first verify that $\pi^{(N)}$ is indeed a probability measure. Fix $k_2,\cdots, k_d$ and sum over $k_1$ yields (note that $\pi \equiv 0$ on $\Omega^c$)
	\begin{align*}
	&\sum_{0\leq k_1 \leq N} \pi^{(N)}\left(a_1+\frac{k_1l_1}{N},\cdots, a_d+\frac{k_dl_d}{N}\right) \\
	= & \sum_{(t_2,\cdots, t_d)\in\left\lbrace -1, +1\right\rbrace^{d-1}}\int_{\prod\limits_{i=2}^dI_{i,k_i}^{(t_i)}}\prod\limits_{i=2}^dg_{i,k_i}^{(t_i)}\left[\sum_{ k_1}\left(\int_{I_{1,k_1}^{(-1)}}g_{1,k_1}^{(-1)}(x_1)+ \int_{I_{1,k_1}^{(+1)}}g_{1,k_1}^{(+1)}(x_1)\right)\pi(d\bx_{1:d})\right]\\
	=& \sum_{(t_2,\cdots, t_d)\in\left\lbrace -1, +1\right\rbrace^{d-1}}\int_{\prod\limits_{i=2}^dI_{i,k_i}^{(t_i)}}\prod\limits_{i=2}^dg_{i,k_i}^{(t_i)}\left[\sum_{k_1}\int_{I_{1,k_1}^{(-1)}}\left(g_{1,k_1}^{(-1)}(x_1) + g_{1,k_1-1}^{(+1)}(x_i)\right)\pi(d\bx_{1:d})\right]\\
	=& \sum_{(t_2,\cdots, t_d)\in\left\lbrace -1, +1\right\rbrace^{d-1}}\int_{\prod\limits_{i=2}^dI_{i,k_i}^{(t_i)}}\prod\limits_{i=2}^dg_{i,k_i}^{(t_i)}\left[\sum_{k_1}\int_{I_{1,k_1}^{(-1)}}\pi(d\bx_{1:d})\right]\\
	=& \sum_{(t_2,\cdots, t_d)\in\left\lbrace -1, +1\right\rbrace^{d-1}}\int_{\prod\limits_{i=2}^dI_{i,k_i}^{(t_i)}}\prod\limits_{i=2}^dg_{i,k_i}^{(t_i)}(x_i)\pi_{2:d}(d\bx_{2:d}),
	\end{align*}
	where $\pi_{2:d}(d\bx_{2:d}) = \int_{\Omega_1}\pi(d\bx_{1:d})$ is also a probability measure.
	Therefore we can sum over $k_i$ $(1\leq i\leq d)$
	iteratively and get
	\begin{equation*}
	\sum_{1\leq i\leq d, 0\leq k_i \leq N} \pi^{(N)}\left(a_1+\frac{k_1l_1}{N},\cdots, a_d+\frac{k_dl_d}{N}\right) = 1.
	\end{equation*}
	Furthermore, from the above arguments we have the $i$th $(1\leq i\leq d)$ marginal of $\pi^{(N)}$ is
	\begin{equation*}
	\pi_i^{(N)}\left(a_i + \frac{k_il_i}{N}\right) = \int_{I_{i,k_i}^{(-1)}}g_{i,k_i}^{(-1)}(x_i)\pi_i(dx_i) + \int_{I_{i,k_i}^{(+1)}}g_{i,k_i}^{(+1)}(x_i)\pi_i(dx_i), \qquad 0\leq k_i\leq N,
	\end{equation*}
	where $\pi_i$ is the $i$th marginal of $\pi$. Next we show that $\pi$ and $\pi^{(N)}$ are close in Wasserstein distance. Note that for any 1$-$Lipschitz function $\phi$, we have
	{\small{
			\begin{align*}
			&\int \phi d\pi^{(N)} - \int\phi d\pi \\
			= &\sum_{1\leq i\leq d, 0\leq k_i \leq N} \phi\left(a_1+\frac{k_1l_1}{N}, \cdots, a_d+\frac{k_dl_d}{N}\right)\pi^{(N)}\left(a_1+\frac{k_1l_1}{N},\cdots, a_d+\frac{k_dl_d}{N}\right) - \int\phi d\pi\\
			= & \sum_{k_1,\cdots, k_d}\int_{\prod\limits_{i=1}^dI_{i,k_i}^{(-1)}}\sum_{(s_1,\cdots,s_d)\in\{0,1\}^d}\prod\limits_{i=1}^dg_{i,k_i-s_i}^{(2s_i-1)}(x_i)\left[\phi\left(\left\lbrace a_i+\frac{(k_i-s_i)l_i}{N}\right\rbrace_{i=1}^d\right) -\phi\left(\left\lbrace x_i\right\rbrace_{i=1}^d\right)\right]\pi(d\bx_{1:d}),
			\end{align*}
	}} 
    where $\sum_{(s_1,\cdots,s_d)\in\{0,1\}^d}\prod\limits_{i=1}^dg_{i,k_i-s_i}^{(2s_i-1)}(x_i) = \prod_{i=1}^d\left(g_{i,k_i}^{(-1)}(x_i) + g_{i,k_i-1}^{(+1)}(x_i)\right) = 1$ is used in the last step.
	
	\noindent For any $(x_1,\cdots, x_d)\in \prod\limits_{i=1}^dI_{i,k_i}^{(-1)}$ and $(s_1,\cdots,s_d)\in\{0,1\}^d$, by Lipschitz property of $\phi$ we get
	\begin{equation*}
	\left|\phi\left(\left\lbrace a_i+\frac{(k_i-s_i)l_i}{N}\right\rbrace_{i=1}^d\right) -\phi\left(\left\lbrace x_i\right\rbrace_{i=1}^d\right)\right|\leq \frac{l\sqrt{d}}{N},
	\end{equation*}
	where $l = l_d = \max_{1\leq i\leq d}l_i$.
	Consequently
	\begin{align*}
	\calW\left(\pi^{(N)},\pi\right) &= \inf_{\phi\in\textrm{Lip}_1}\left\lbrace \int \phi d\pi^{(N)} - \int\phi d\pi \right\rbrace\\
	&\leq \frac{l\sqrt{d}}{N}\sum_{k_1,\cdots, k_d}\int_{\prod\limits_{i=1}^dI_{i,k_i}^{(-1)}}\sum_{(s_1,\cdots,s_d)\in\{0,1\}^d}\prod\limits_{i=1}^dg_{i,k_i-s_i}^{(2s_i-1)}(x_i)\pi(d\bx_{1:d})\\
	&\leq \frac{l\sqrt{d}}{N}.
	\end{align*}
	Finally, we check that $\pi^{(N)}$ is a $l/N-$ martingale measure, that is, for all $1\leq j\leq d-1$, $0\leq k_j\leq N$,
	{\small
		\begin{equation}\label{eq.delta_martingale}
		\left|\sum_{0\leq k_{j+1}\leq N}\left[\left(a_{j+1} + \frac{k_{j+1}l_{j+1}}{N}\right) - \left(a_j + \frac{k_jl_j}{N}\right)\right]\frac{\pi^{(N)}_{1:j+1}\left(a_1+\frac{k_1l_1}{N},\cdots, a_{j+1}+\frac{k_{j+1}l_{j+1}}{N}\right)}{\pi^{(N)}_{1:j}\left(a_1+\frac{k_1l_1}{N},\cdots, a_{j}+\frac{k_{j}l_{j}}{N}\right)}\right|\leq \frac{l}{N}.
		\end{equation}
	}
	To prove \eqref{eq.delta_martingale}, observe that
	\begingroup
	\allowdisplaybreaks
	\begin{align*}
	    &\sum_{0\leq k_{j+1}\leq N}\left(a_{j+1} + \frac{k_{j+1}l_{j+1}}{N}\right) \pi^{(N)}_{1:j+1}\left(a_1+\frac{k_1l_1}{N},\cdots, a_{j+1}+\frac{k_{j+1}l_{j+1}}{N}\right)\\
		= \quad & \sum_{(t_1,\cdots, t_j)\in\left\lbrace -1, +1\right\rbrace^{j}}\int_{\prod\limits_{i=1}^jI_{i,k_i}^{(t_i)}}\prod\limits_{i=1}^jg_{i,k_i}^{(t_i)}\left[\sum_{0\leq k_{j+1}\leq N}\left(\int_{I_{j+1,k_{j+1}}^{(-1)}}g_{j+1,k_{j+1}}^{(-1)}(x_{j+1})\left(a_{j+1} + \frac{k_{j+1}l_{j+1}}{N}\right) +\right.\right.\\
		& \qquad\qquad\qquad\qquad\quad\left.\left. \int_{I_{j+1,k_{j+1}}^{(+1)}}g_{j+1,k_{j+1}}^{(+1)}(x_{j+1})\left(a_{j+1} + \frac{k_{j+1}l_{j+1}}{N}\right)\right)\pi_{1:j+1}(d\bx_{1:j+1})\right]\\
		= \quad & \sum_{(t_1,\cdots, t_j)\in\left\lbrace -1, +1\right\rbrace^{j}}\int_{\prod\limits_{i=1}^jI_{i,k_i}^{(t_i)}}\prod\limits_{i=1}^jg_{i,k_i}^{(t_i)}\left[\sum_{k_{j+1}}\int_{I_{j+1,k_{j+1}}^{(-1)}}\left(g_{j+1,k_{j+1}}^{(-1)}(x_{j+1})\left(a_{j+1} + \frac{k_{j+1}l_{j+1}}{N}\right) +\right.\right.\\
		& \qquad\qquad\qquad\qquad\quad\left.\left. g_{j+1,k_{j+1}-1}^{(+1)}(x_{j+1})\left(a_{j+1} + \frac{(k_{j+1}-1)l_{j+1}}{N}\right)\right)\pi_{1:j+1}(d\bx_{1:j+1})\right]\\
		= \quad & \sum_{(t_1,\cdots, t_j)\in\left\lbrace -1, +1\right\rbrace^{j}}\int_{\prod\limits_{i=1}^jI_{i,k_i}^{(t_i)}}\prod\limits_{i=1}^jg_{i,k_i}^{(t_i)}\left[\sum_{k_{j+1}}\int_{I_{j+1,k_{j+1}}^{(-1)}}x_{j+1}\pi_{1:j+1}(d\bx_{1:j+1})\right]\\
		= \quad & \sum_{(t_1,\cdots, t_j)\in\left\lbrace -1, +1\right\rbrace^{j}}\int_{\prod\limits_{i=1}^jI_{i,k_i}^{(t_i)}}\prod\limits_{i=1}^jg_{i,k_i}^{(t_i)}(x_i)\left(\int_{\Omega_{j+1}}x_{j+1}\pi_{1:j+1}(d\bx_{1:j+1})\right).
	\end{align*}
	\endgroup
	Since $\pi$ is a martingale measure, we have $\EE_{\pi}\left[\alpha(\bX_{1:j})(X_{j+1}-X_j)\right] = 0$ holds for all $\calF_{j}-$measurable function $\alpha$. In particular, we have
	\begin{equation*}
	\sum_{(t_1,\cdots, t_j)\in\left\lbrace -1, +1\right\rbrace^{j}}\int_{\prod\limits_{i=1}^jI_{i,k_i}^{(t_i)}\times\Omega_{j+1}}\prod\limits_{i=1}^jg_{i,k_i}^{(t_i)}(x_i)\left(x_{j+1} - x_{j}\right)\pi_{1:j+1}(d\bx_{1:j+1}) = 0.
	\end{equation*}
	Hence,
	\begin{align*}
	&\left|\sum_{0\leq k_{j+1}\leq N}\left[\left(a_{j+1} + \frac{k_{j+1}l_{j+1}}{N}\right) - \left(a_j + \frac{k_jl_j}{N}\right)\right]\pi^{(N)}_{1:j+1}\left(a_1+\frac{k_1l_1}{N},\cdots, a_{j+1}+\frac{k_{j+1}l_{j+1}}{N}\right)\right|\\
	=\quad&\left|\sum_{(t_1,\cdots, t_j)\in\left\lbrace -1, +1\right\rbrace^{j}}\int_{\prod\limits_{i=1}^jI_{i,k_i}^{(t_i)}}\prod\limits_{i=1}^jg_{i,k_i}^{(t_i)}(x_i)\left(x_j- \left(a_j + \frac{k_jl_j}{N}\right)\right)\pi_{1:j}(d\bx_{1:j})\right|\\
	\leq\quad& \frac{l_j}{N} \sum_{(t_1,\cdots, t_j)\in\left\lbrace -1, +1\right\rbrace^{j}}\int_{\prod\limits_{i=1}^jI_{i,k_i}^{(t_i)}}\prod\limits_{i=1}^jg_{i,k_i}^{(t_i)}(x_i)\pi_{1:j}(d\bx_{1:j})\\
	\leq\quad& \frac{l_j}{N} \pi^{(N)}_{1:j}\left(a_1+\frac{k_1l_1}{N},\cdots, a_{j}+\frac{k_{j}l_{j}}{N}\right),
	\end{align*}
	which implies \eqref{eq.delta_martingale}.
\end{proof}

\section{Proofs of Theorem \ref{thm.strongdual_general}, \ref{thm.distance_n_general} and \ref{thm.non_compact_final_step}}\label{sect.proofs_non_compact}

\subsection{Proof of Lemma \ref{lemma.skorokhod_embedding} and  \ref{lemma.reduce_to_compact}}\label{proof.skorokhod_embedding}

We first prove Lemma \ref{lemma.skorokhod_embedding},  which is based on the Skorokhod's embedding.

\begin{proof}[Proof of Lemma \ref{lemma.skorokhod_embedding}]
We first construct the stopping time $T$ via the standard arguments in Skorokhod Embedding. Let $(U,V)$ be a random vector independent of the Brownian motion $(B_t)_{t\geq 0}$, satisfying:
\begin{equation*}
dF_{U,V}(u,v) = \frac{2}{\EE |X|}(v-u)dF(u)dF(v), \qquad u\leq 0< v,
\end{equation*}
where $F$ is the cdf of $X$. Now we define the stopping time $T = \inf\left\lbrace t>0: B_t\notin(U,V)\right\rbrace$, it is straight forward to see that  $B_T$ follows the same law as $X$. For any $k>0$, we define the the following stopping time $T'$:
\begin{equation*}
T' = \left\{\begin{aligned}
\inf\left\lbrace t>0: B_t\notin(U,V) \right\rbrace, \quad & \textrm{If } U>-k, V< k\\
\inf\left\lbrace t>0: B_t\notin(U,-U) \right\rbrace, \quad &\textrm{If } U>-k, V\geq k\\
\inf\left\lbrace t>0: B_t\notin(-V,V) \right\rbrace, \quad &\textrm{If } U\leq -k, V< k\\
\inf\left\lbrace t>0: B_t\notin(-k,k) \right\rbrace, \quad &\textrm{If } U\leq-k, V\geq k 
\end{aligned}\right.
\end{equation*}
By the construction of $T'$, $|B_{T'}|$ is always bounded by the prefixed $k$. Moreover, 
we have $B_{T'} = B_T$ conditioning on the event $\left\lbrace U > -k, V<k\right\rbrace $, and $|B_{T'}|\leq\min\left\lbrace |U|, |V|\right\rbrace \leq |B_T| $ on $\left\lbrace U > -k, V<k\right\rbrace^c $. Hence $|B_{T'}|\leq |B_T|\wedge k$. Next we upper bound the $\EE|B_T-B_{T'}|$.

Observe that $B_{T'} - B_T = 0$ on the event $\left\lbrace U > -k, V<k\right\rbrace $, and $|B_T-B_{T'}|\leq V-U $ on $\left\lbrace U > -k, V<k\right\rbrace^c $. Hence,
\begin{align*}
\EE |B_T- B_{T'}| &= \EE \left[\EE\left[ |B_T-B_{T'}|  U, V\right]\right]\\
&= \frac{2}{\EE|X|}\left[\int_{-\infty}^0dF(u)\int_k^{\infty}(v-u)^2dF(v) + \int_{-\infty}^{-k}dF(u)\int_0^{\infty}(v-u)^2dF(v)\right]\\
&\leq \frac{4}{\EE|X|}\left[\int_{-\infty}^0\left[\EE[X^2\mathbbm{1}_{\left\lbrace X\geq k\right\rbrace }] + u^2\PP(X\geq k)\right]dF(u) +\right.\\
& \qquad \qquad \qquad  \qquad \qquad \qquad \left.\int_0^{\infty}\left[\EE[X^2\mathbbm{1}_{\left\lbrace X\leq -k\right\rbrace }] + v^2\PP(X\leq -k)\right]dF(v)\right]\\
&\leq \frac{4}{\EE|X|} \left[\EE[X^2\mathbbm{1}_{\left\lbrace |X|\geq k\right\rbrace }] + \PP(|X|\geq k)\EE[X^2] \right]\\
&\leq \frac{4}{\EE|X|}\left[\frac{2}{t}\EE\left[e^{\frac{t}{2}|X|^2}\mathbbm{1}_{\{|X|\geq k\}}\right] + \frac{1}{e^{tk^2}}\EE\left[e^{t|X|^2}\right]\cdot\frac{1}{t}\EE\left[e^{t|X|^2}\right]\right] \\
&\leq \frac{4}{c}\left(\frac{2C}{te^{tk^2/2}} + \frac{C^2}{te^{tk^2}}\right).
\end{align*}
Hence, by picking $k:= \sqrt{\frac{2}{t}\log\left(\frac{8C^2}{ct\delta}\right)}>0$, we have $\EE|B_T - B_{T'}|\leq \delta$. 
\end{proof}
Now, we can provide the complete proof of Lemma  \ref{lemma.reduce_to_compact}.
\begin{proof}[Proof of Lemma \ref{lemma.reduce_to_compact}]
For any $\pi\in\calM(\varepsilon,K)$, let $(X_1,\cdots, X_d)$ be the martingale process which the underlying measure is $\pi$. We first recover the construction of the Skorokhod Embedding of martingale, that is, there exists some sequence of increasing stopping time $T_1\leq \cdots\leq T_d$  for a Brownian Motion $(B_t)_{t\geq 0}$ such that:
\begin{equation}\label{eq.strassen_embedding}
(X_1,\cdots, X_d) \stackrel{d}{=} \left(B_{T_1},\cdots, B_{T_d}\right).
\end{equation}

We construct the sequence of stopping time $\{T_k\}_{k=1}^d$ recursively. By the construction of Lemma \ref{lemma.skorokhod_embedding}, there exist a stopping time $T_1$ such that $B_{T_1}\stackrel{d}{=}X_1$. 
Suppose now we have $(X_1,\cdots,X_k)\stackrel{d}{=} \left(B_{T_1},\cdots, B_{T_k}\right)$ for some $k\geq 1$. The strong Markov property implies that $\left(B_t^{(T_k)}:= B_{T_k+t} - B_{T_k}\right)_{t\geq 0}$ is a Brownian Motion that is independent of $\calF_{T_k}$. Let $\mu_k(X_1,\cdots, X_k;\cdot)$ be the regular conditional distribution of $X_{k+1} - X_k$ given $X_i, i\leq k$. The mean of $\mu_k(X_1,\cdots, X_k;\cdot)$ equals zero almost surely by the definition of martingale. Applying Lemma \ref{lemma.skorokhod_embedding} to $\mu_k(X_1,\cdots, X_k;\cdot)$ we see that there is a stopping time $\tau_{k+1}$ such that
\begin{equation*}
B_{\tau_{k+1}}^{(T_{k})} \stackrel{d}{=} \mu_k(X_1,\cdots, X_k;\cdot).
\end{equation*}
Thus by taking the stopping time $T_{k+1} = T_k + \tau_{k+1}$ we have $(X_1,\cdots, X_{k+1}) \stackrel{d}{=} \left(B_{T_1},\cdots, B_{T_{k+1}}\right)$ and the result in \eqref{eq.strassen_embedding} follows by induction.

Under the same Brownian Motion, we show that there exist a sequence of increasing stopping time $T_1'\leq \cdots \leq T_d'$, such that 
\begin{enumerate}
\item [(i)] $\left(B_{T_1'},\cdots, B_{T_d'}\right)$ is a martingale, and each marginal has bounded support. 
\item [(ii)] $\EE |B_{T_k} - B_{T_k'}|\leq \frac{\delta}{d}$, for all $1\leq k\leq d$. 
\end{enumerate}
Notice that $\left(B_{T_1},\cdots, B_{T_d}\right)\sim\pi$, and so $B_{T_i}\sim \pi_i$. Since $\pi\in\calMM(\varepsilon)$, we have $\sum_{i=1}^d\calW(\mu_i,\pi_i)\leq\varepsilon$. Note that the map $x\mapsto |x|$ is 1-Lipschitz. Thus, 
\begin{equation*}
\big|\EE_{\pi_i}|X| - \EE_{\mu_i}|X|\big|\leq \calW(\mu_i,\pi_i)\leq \varepsilon.
\end{equation*}
For $\varepsilon>0$ sufficiently small, we have $\EE_{\pi_i}|X|$ ($1\leq i\leq d$) are uniformly lower bounded by $\min_{1\leq i\leq d} \EE_{\mu_i}|X| - \varepsilon>0$. 
Hence, by the Lemma \ref{lemma.skorokhod_embedding}, for any $\delta >0$, there exist a sequence of stopping times  $\left\lbrace T_k'\right\rbrace_{k=1}^d $, random variables $\left\lbrace \tau_k' \right\rbrace_{k=2}^{d}$, and constant $C= C'\sqrt{\log(1/\delta)}$ (where $C'$ only depends on $\bmu, \gamma$) such that $T_{k+1}' = T_{k}' + \tau_{k+1}'$ ($1\leq k\leq d-1$), and 
\begin{equation*}
|B_{T_1'}|\leq |B_{T_1}|\wedge C, \qquad \EE|B_{T_1} - B_{T_1'}|\leq \delta/d^2.
\end{equation*}
and 
\begin{equation*}
\big|B_{\tau_{k+1}'}^{(T_k')}\big|\leq \big|B_{\tau_{k+1}}^{(T_k)}\big|\wedge C, \qquad \EE\big|B_{\tau_{k+1}}^{(T_k)} - B_{\tau_{k+1}'}^{(T_k')}\big|\leq \delta/d^2, \qquad \textrm{for all } 1\leq k\leq d-1.
\end{equation*}
By the construction in Lemma \ref{lemma.skorokhod_embedding}, we have for each $k$ that 
\begin{equation*}
|B_{T_k'\wedge t}| \leq |B_{T_1'\wedge t}| + \sum_{i=1}^{k-1}\big|B_{\tau_{i+1}'\wedge (t_i')}^{(T_i')}\big|\leq kC.
\end{equation*}
where $t_i':= (t-T_i')\vee 0$. Hence $\left(B_{T_k'\wedge t}\right)$ is uniformly bounded, by the Optional Stopping Theorem we have  $\left(B_{T_1'},\cdots, B_{T_d'}\right)$ is a martingale, and each marginal has bounded support (since $|B_{T_k}|\leq kC$). Furthermore, we have 
\begin{equation*}
\EE|B_{T_k} - B_{T_k'}|\leq \EE|B_{T_1} - B_{T_1'}| + \sum_{i=1}^{k-1} \EE\big|B_{\tau_{i+1}}^{(T_i)} - B_{\tau_{i+1}'}^{(T_i')}\big|\leq \frac{k\delta}{d^2}\leq \frac{\delta}{d}
\end{equation*}
for all $1\leq k\leq d$. To sum up, $\left(B_{T_1'},\cdots, B_{T_d'}\right)\sim\hat{\pi}$ satisfying (i), (ii) and (iii). Recalled that $\left(B_{T_1},\cdots, B_{T_d}\right)\sim\pi$, since the underlying Brownian Motion are the same, we automatically obtain the coupling of $\pi$ and $\hat{\pi}$, and thus by (ii) we have $\calW(\pi,\hat{\pi})\leq \delta$. Finally, by triangular inequality, 
\begin{equation*}
    d\left(\hat\pi, \Pi(\bmu)\right)\leq \calW(\hat\pi, \pi) + d\left(\pi, \Pi(\bmu)\right)\leq \varepsilon + \delta.
\end{equation*}
Thus, we can pick $\Lambda^{\delta} = \bigtimes_{i=1}^d\left[-iC, iC\right]$ to have $\widehat{\pi}\in \calMM^{\Lambda^{\delta}}(\varepsilon+\delta)$ and $\calW(\pi,\widehat{\pi})\leq\delta$.
\end{proof}
\subsection{Proof of Theorem \ref{thm.strongdual_general}} \label{sect.strong_dual_general_proof}
In this section, we sketch the proof of Theorem \ref{thm.strongdual_general} based on the proof of Theorem \ref{thm.strongdual}.
\begin{proof}[Proof of Theorem \ref{thm.strongdual_general}]
Similar to Theorem \ref{thm.strongdual}, consider the following functional 
	\begin{align*}
	\calL\left(\bpi,\gamma,\eta, \boldsymbol{\alpha} \right) &=\int_{\RR^d\times\Lambda} \left[f\left(\bx'\right)+\sum_{k=1}^{d-1}\alpha_k(\bx'_{1:k})(x_{k+1}'-x_k')+\eta x_1'\right]\bpi(d\bx, d\bx') \\
	&\qquad\qquad\qquad\qquad\qquad +\gamma\left(\varepsilon - \int_{\RR^d\times\Lambda} \|\bx - \bx'\|_1\bpi(d\bx, d\bx')\right)
	\end{align*}
	with
	\begin{align*}
	&\bpi \in \bPi := \{\text{Borel probability measures on $\RR^d\times\Lambda$ with the first $d$ marginals are exactly $\bnu$}\};\\
	&\gamma \geq 0;\\
	&\eta \in \RR;\\
	&\boldsymbol{\alpha}= (\alpha_1,\cdots,\alpha_{d-1}), \textrm{ where } \alpha_k(\cdot)\in C\left(\Lambda_{1:k}\right)\textrm{ for } 1\leq k\leq d-1.
	\end{align*}
	The main difference compare to Theorem \ref{thm.strongdual} is the definition of $\bPi$. Note that for every probability measure $\bpi\in\bPi$, we have $\bpi_i = \bnu_i$, and $\bpi_{d+i}\in\calP(\Lambda_i)$ has compact support, for all $1\leq i\leq d$. As a consequence of Prokhorov's theorem, $\bPi$ is a compact convex subset of the space of Borel probability measures on $\RR^d\times \Lambda$ equipped with the weak topology induced by $C_b(\RR^d\times\Lambda)$. It is also straight forward to check that $\calL$ is both continuous and affine in $\left(\bpi,\gamma, \eta, \balpha \right)$, now we can invoke the Sion's minimax theorem (Lemma \ref{sion}) to obtain
	\begin{align}
	\sup_{\bpi \in \bPi} \inf_{\mbox{$\begin{subarray}{c}\balpha\in C_{\balpha}(\Lambda), \gamma\geq 0, \eta\in\RR\end{subarray}$}} \calL\left(\bpi,\gamma,\eta, \balpha \right) =  \inf_{\mbox{$\begin{subarray}{c}\balpha\in C_{\balpha}(\Lambda), \gamma\geq 0, \eta\in\RR \end{subarray}$}}\sup_{\bpi \in \bPi} \calL\left(\bpi,\gamma,\eta, \balpha \right),
	\end{align}
	where ${C}_{\boldsymbol{\alpha}}(\Lambda) = \bigtimes_{k=1}^{d-1} C(\Lambda_{1:k})$. Follows from the same arguments in the proof of Theorem \ref{thm.strongdual}, we have 
	\begin{equation*}
	    \sup_{\bpi \in \bPi} \inf_{\mbox{$\begin{subarray}{c}\balpha\in C_{\balpha}(\Lambda), \gamma\geq 0\end{subarray}$}} \calL\left(\bpi,\gamma, \eta, \balpha \right) = \bI^{\Lambda}_{\bnu}(\varepsilon).
	\end{equation*}
	For $\bpi\in\bPi$, let $\pi_{1:d}$ be the projection of $\bpi$ onto its first $d$ marginals. Then, $\pi_{1:d}\in\Pi(\bnu)$. Let $\bpi(d\bx' | \bx)$ be the regular conditional probability measure. Next, we rewrite $\calL(\bpi,\gamma, \eta, \balpha)$ by
	\begin{align*}
	    \calL(\bpi, \gamma, \eta, \balpha) = \gamma\varepsilon + \int_{\RR^d}\left[ \int_{\Lambda} \left[f\left(\bx'\right)+\sum_{k=1}^{d-1}\alpha_k(\bx'_{1:k})(x_{k+1}'-x_k')+\eta x_1' - \right.\right. \\
	    \left.\left.\gamma \|\bx-\bx'\|_1\right]\bpi(d\bx'|\bx)\right]\bpi_{1:d}(d\bx).
	\end{align*}
By Proposition 2.1 in \cite{model_indep_bound_Beiglbock2013}, for any Lipschitz function $\phi:\RR^d\rightarrow \RR$, we have 
\begin{equation}\label{eq.Kantorovich_dual}
    \sup_{\pi\in\Pi(\bnu)} \int_{\RR^d}\phi(\bx)\pi(d\bx) = \inf_{\beta_k\in C_b(\RR), 1\leq k\leq d}\left\lbrace \sum_{k=1}^d\beta_k(x_k)\bnu_k(dx_k): \sum_{k=1}^d\beta_k(x_k)\geq \phi(\bx)\right\rbrace.
\end{equation}
Thus, 
\begin{align}
    \nonumber&\sup_{\bpi\in\bPi} \calL(\bpi, \gamma, \eta, \balpha) \\
    \nonumber=\quad &\sup_{\bpi\in\bPi} \gamma \varepsilon + \int_{\RR^d}\sup_{\bx'\in\Lambda}\left\lbrace f\left(\bx'\right)+\sum_{k=1}^{d-1}\alpha_k(\bx'_{1:k})(x_{k+1}'-x_k')-\gamma\|\bx-\bx'\|_1+\eta x_1' \right\rbrace\bpi_{1:d}(d\bx) \\
    =\quad &\sup_{\pi\in\Pi(\bnu)}\gamma\varepsilon + \int_{\RR^d}\sup_{\bx'\in\Lambda}\left\lbrace f\left(\bx'\right)+\sum_{k=1}^{d-1}\alpha_k(\bx'_{1:k})(x_{k+1}'-x_k')-\gamma\|\bx-\bx'\|_1+\eta x_1' \right\rbrace\pi(d\bx) \label{eq.sup_over_Lambda}\\
    =\quad &\inf_{\bbeta\in C_{\bbeta}(\RR^d)}\left\lbrace \gamma\varepsilon + \sum_{k=1}^d\int_{\RR}\beta_k(x_k)\nu_k(d x_k): \widetilde{H}(\gamma, \eta, \balpha, \bbeta)(\bx, \bx')\leq 0, \text{ for all } \bx\in\RR^d,  \bx'\in\Lambda\right\rbrace. \label{eq.inf_over_beta}
\end{align}
Here $\bbeta = (\beta_1,\cdots, \beta_d)$, $C_{\bbeta}(\RR^d) = \bigtimes_{i=1}^d C_b(\RR)$, where $\beta_k(x_k)\in C_b(\RR)$ for all $1\leq k\leq d$. We have also applied equation \eqref{eq.Kantorovich_dual} to connect the equality between \eqref{eq.sup_over_Lambda} and \eqref{eq.inf_over_beta}. Finally, 
\begin{align*}
    &\inf_{\mbox{$\begin{subarray}{c}\balpha\in C_{\balpha}(\Lambda), \gamma\geq 0, \eta\in\RR \end{subarray}$}}\sup_{\bpi \in \bPi} \calL\left(\bpi,\gamma,\eta, \balpha \right) \\
    =\quad & \inf_{\mbox{$\begin{subarray}{c}\balpha\in C_{\balpha}(\Lambda), \bbeta\in C_{\bbeta}(\RR^d),\\ \gamma\geq 0, \eta\in\RR \end{subarray}$}}\left\lbrace \gamma\varepsilon + \sum_{k=1}^d\int_{\RR}\beta_k(x_k)\nu_k(d x_k): \widetilde{H}(\gamma, \eta, \balpha, \bbeta)(\bx, \bx')\leq 0, \text{ for all } \bx\in\RR^d,  \bx'\in\Lambda\right\rbrace\\
    =\quad & \bJ^{\Lambda}_{\bnu}(\varepsilon),
\end{align*}
which completes the proof.
\end{proof}
\subsection{Proof of Theorem \ref{thm.distance_n_general}}\label{sect.distance_n_general}
Before we stating the proof of Theorem \ref{thm.distance_n_general}, let's prove the feasibility result in Lemma \ref{lemma.feasibility_non_compact} first.
\begin{proof}[Proof of Lemma \ref{lemma.feasibility_non_compact}]
	It suffices to check that $\calM^{\Lambda^{\delta}}_n(\varepsilon, K)\neq \varnothing$ with high probability. Since 
	\begin{equation*}
	    \sup_{1\leq i\leq d}\EE_{\mu_i}[e^{\gamma |X|^2}] < \infty
	\end{equation*}
	for some $\gamma >0 $,  by Theorem 2 in \cite{fournier2015rate}, there exist universal constant $C_3, C_4 > 0$, such that for any 
	$n\geq \frac{4d^{2}}{C_{4}}\log\left(\frac{d C_{3}}{\delta'}\right)\varepsilon^{-2}(=: N'(\varepsilon, \delta'))$, we have 
	\begin{equation*}
	    \sum_{i=1}^d\calW\left(\widehat{\mu}_i^{(n)},\mu_i\right)\leq\varepsilon/2
	\end{equation*}
	hold with probability at least $1-\delta$. Let  $\pi^*\in\Pi(\bmu)$ be the optimal martingale measure of the MOT problem \eqref{def.standard_MOT}. Similar to the arguments in the proof of Lemma \ref{lemma.feasibility}, there exists a probability measure $\pi'\in\Pi(\widehat\bmu^n)$, such that $d\left(\pi^*, \Pi(\widehat\bmu^n)\right)\leq \calW(\pi^*,\pi') = \sum_{i=1}^d\calW\left(\widehat{\mu}_i^{(n)},\mu_i\right)\leq\varepsilon/2$. Note that by Lemma \ref{lemma.reduce_to_compact}, there exists a $\widehat\pi\in\calM^{\Lambda^{\delta}}(\delta, K)\subset \calM^{\Lambda^{\delta}}(\varepsilon, K)$, such that $\calW(\pi^*, \widehat\pi)\leq \delta$. Thus,
	\begin{equation*}
	    d\left(\widehat\pi, \Pi(\widehat\bmu^n)\right) \leq \calW(\widehat\pi, \pi^*) + d\left(\pi^*, \Pi(\widehat\bmu^n)\right)\leq \delta + \varepsilon/2 \leq \varepsilon.
	\end{equation*}
	In other words, $\widehat\pi\in\calM^{\Lambda^{\delta}}_n(\varepsilon, K)$.
\end{proof}
Now, we can prove Theorem \ref{thm.distance_n_general} as follows.
\begin{proof}[Proof of Theorem \ref{thm.distance_n_general}]
Using the same strategy as Theorem \ref{thm.distance_n}, it suffices to show that
\begin{align*}
&\left|\bI_{\bmu_0}^{\Lambda^{\delta}}(\varepsilon) - \bI_{\bmu_1}^{\Lambda^{\delta}}(\varepsilon)\right|\\
\leq\quad& \frac{4}{\varepsilon}\left(|f(\textbf{0})| + \frac{5}{4}\varepsilon L + L\sum_{i=1}^dm_1(\mu_i)\right)\sqrt{\frac{\log\left(2C_3d/\delta'\right)}{C_4n}},
\end{align*}
where $\bmu_0:= \left(\widehat{\mu}_1^{(n)},\cdots,\widehat{\mu}_d^{(n)}\right)$ and $\bmu_1 := \left(\widehat{\mu}_1^{(n)},\cdots,\widehat{\mu}_{d-1}^{(n)}, \mu_d\right)$. Then, the desired result follows from a triangular inequality. 

By Corollary \ref{cor.dual_representation_general}, 

\begin{align*}
\bI_{\bmu_0}^{\Lambda^{\delta}}(\varepsilon) = 
\inf_{\left(\gamma,\eta,  \boldsymbol{\alpha}, \boldsymbol{\beta}\right)\in\calS^{\Lambda^{\delta}}_{\widehat\bmu^n_{1:d-1}}}\gamma\varepsilon + \int_{\RR}
F\left(x_d; \gamma, \eta, \balpha, \bbeta\right)\widehat{\mu}_d^{(n)}(dx_d).
\end{align*}
Note that $F'$ is $\gamma-$Lipschitz in $x_d$ for any fixed $\gamma$. Similar to Theorem \ref{thm.distance_n}, it suffices to show that $\gamma$ is appropriately bounded. For a constant $B>0$, we define 

\begin{align*}
\bI_{\bmu_0}^{\Lambda^{\delta}, >B}(\varepsilon) :=  \inf_{ \left(\gamma, \eta,  \boldsymbol{\alpha}, \boldsymbol{\beta}\right)\in\calS^{\Lambda^{\delta}}_{\widehat\bmu^n_{1:d-1}}, \gamma > B}\gamma\varepsilon + \int_{\RR}
F\left(x_d; \gamma,\eta,\balpha, \bbeta \right)\widehat{\mu}_d^{(n)}(dx_d).
\end{align*}
We claim that by a proper choice of $B$, we have 
\begin{equation*}\label{eq.bound_gamma_general}
\bI_{\bmu_0}^{\Lambda^{\delta}, >B}(\varepsilon) > \bI_{\bmu_0}^{\Lambda^{\delta}}(\varepsilon).
\end{equation*}
For any $\pi\in\calMM_{\bmu_0}^{\Lambda^{\delta}}\left(\varepsilon\right)$, there exists $\pi'\in\Pi(\bmu_0)$, such that
\begin{equation*}
    \calW(\pi, \pi') = d\left(\pi, \Pi(\bmu_0)\right)\leq\varepsilon. 
\end{equation*}
Follows from the proof of Lemma \ref{lemma.feasibility_non_compact}, for $n\geq N'\left(\frac{\varepsilon}{2}, \frac{\delta}{2d}\right)$, with probability at least $1-\frac{\delta}{2d}$,
\begin{equation*}
    \sum_{i=1}^d\calW\left(\mu_i, \widehat{\mu}_i^{(n)}\right) \leq \frac{\varepsilon}{4}.
\end{equation*}
Thus, for each $1\leq i\leq d$,
\begin{equation*}
    \calW(\pi_i,\mu_i) \leq \calW\left(\pi_i,\widehat{\mu}_i^{(n)}\right)+\calW\left(\widehat{\mu}_i^{(n)},\mu_i\right)=\calW\left(\pi_i,\pi_i'\right)+\calW\left(\widehat{\mu}_i^{(n)},\mu_i\right)\leq \calW(\pi, \pi') + \frac{\varepsilon}{4}\leq \frac{5}{4}\varepsilon.
\end{equation*}
Since $f$ is $L-$Lipschitz, it satisfies a linear growth condition, namely,
\begin{equation*}
|f(\bx)|\leq |f(\textbf{0})| + L\|\bx\|_1.
\end{equation*}
Hence,  
\begin{align*}
\EE_{\pi} \left[f(\bX)\right] &\leq |f(\textbf{0})| + L\sum_{i=1}^d\int_{\RR}|x|\pi_i(dx)\\
&\leq  |f(\textbf{0})|  + L\sum_{i=1}^d\left(\int_{\RR}|x|\mu_i(dx)+\calW\left(\pi_i,\mu_i\right)\right)\\
&\leq |f(\textbf{0})| + L\sum_{i=1}^dm_1(\mu_i) + L\sum_{i=1}^d\left(\calW\left(\pi_i,\widehat{\mu}_i^{(n)}\right)+\calW\left(\widehat{\mu}_i^{(n)},\mu_i\right)\right)\\
&\leq |f(\textbf{0})| + L\sum_{i=1}^dm_1(\mu_i) + \frac{5}{4}\varepsilon L.
\end{align*}
Take the supremum over $\pi$ yields
\begin{equation*}
\left|\bI_{\bmu_0}^{\Lambda^{\delta}}(\varepsilon)\right|\leq  |f(\textbf{0})| + \frac{5}{4}\varepsilon L + L\sum_{i=1}^d m_1(\mu_i) .
\end{equation*}
On the other hand, by the definition of $F$ we have 
\begin{equation*}
\begin{aligned}
\bI_{\bmu_0}^{\Lambda^{\delta}, >B}(\varepsilon) \geq \inf_{ \left(\gamma,\eta,  \boldsymbol{\alpha}, \boldsymbol{\beta}\right)\in\calS^{\Lambda^{\delta}}_{\widehat\bmu^n_{1:d-1}}, \textrm{ }\gamma >B}\gamma\varepsilon  +  \int\left(\int\left( f\left(\bx'\right)- \sum_{k=1}^{d-1}\beta_k(x_k)-\right.\right. \\ 
\left.\left. \gamma\sum_{k=1}^{d}|x_k-x_k'| +  \sum_{k=1}^{d-1}\alpha_k(\bx_{1:k}')(x_{k+1}'-x_k') -\right.\right.\\
\left.\left.  +  \eta x_1'\right)\bpi\left(d\bx_{1:d-1}, d\bx' \mid x_d\right)\right)\widehat{\mu}_d^{(n)}(dx_d)
\end{aligned}
\end{equation*}
hold for every conditional probability measure $\bpi(d\bx_{1:d-1},d\bx'\mid x_d)$ (supported on $\RR^{d-1}\times\Lambda^{\delta}$). Recalled from the Lemma \ref{lemma.reduce_to_compact} that there exists a probability measure $\pi\in\calM_0(\RR^d)$  supported on $\Lambda^{\delta}$ that satisfying  $\sum_{i=1}^d\calW\left(\pi_i,\mu_i\right)\leq \delta\leq \frac{\varepsilon}{4}$. Now we pick the $\bpi(d\bx_{1:d-1}, d\bx' \mid x_d)$ such that its projection on $\bx'$ is $\pi$, and its projection on $\bx_{1:d-1}$ is a $(d-1)-$ dimensional probability measure with marginals $\widehat{\mu}_1^{(n)},\cdots,\widehat{\mu}_{d-1}^{(n)}$. Taking the supremum over these $\bpi$'s, and note that $\pi$ is a martingale measure, we have 

\begin{align*}
\nonumber \bI_{\bmu_0}^{\Lambda^{\delta}, >B}(\varepsilon)&\geq \inf_{\left(\gamma,\eta,  \boldsymbol{\alpha}, \boldsymbol{\beta}\right)\in\calS^{\Lambda^{\delta}}_{\widehat\bmu^n_{1:d-1}}, \textrm{ }\gamma>B}\gamma\varepsilon + \EE_{\pi}\left[f(\bX')\right] - \sum_{k=1}^{d-1}\int_{\RR}\beta_k(x_k)\widehat{\mu}_k^{(n)}(dx_k) + \\ \nonumber
& \sum_{k=1}^{d-1}\EE_{\pi}\left[\alpha_k(\bX_{1:k}')(X_{k+1}'-X_k')\right]  + \eta \int_{\RR}x_1'\pi_1(dx_1')  -\gamma\sum_{k=1}^d\calW\left(\pi_k,\widehat{\mu}_k^{(n)}\right)\\ \nonumber
&\geq \inf_{\left(\gamma, \eta,  \boldsymbol{\alpha}, \boldsymbol{\beta}\right)\in\calS^{\Lambda}_{\widehat\bmu^n_{1:d-1}}, \textrm{ }\gamma>B}\gamma\varepsilon +  \EE_{\pi}\left[f(\bX')\right] - \gamma\sum_{k=1}^d\left(\calW\left(\mu_k,\widehat{\mu}_k^{(n)}\right) + \calW\left(\pi_k,\mu_k\right)\right)\\ \nonumber
&> \EE_{\pi}\left[f(\bX')\right] + \frac{\varepsilon B}{2} . \label{eq.bounded_parameter}
\end{align*}
Therefore, by taking 
\begin{equation*}
    B = \frac{4}{\varepsilon}\left(|f(\textbf{0})| + \frac{5}{4}\varepsilon L + L\sum_{i=1}^dm_1(\mu_i)\right),
\end{equation*}
we have the equation \eqref{eq.bound_gamma_general} hold. In other words, 
\begin{align*}
&\bI_{\bmu_0}^{\Lambda^{\delta}}(\varepsilon) = \inf_{\left(\gamma, \eta,  \boldsymbol{\alpha}, \boldsymbol{\beta}\right)\in\calS^{\Lambda^{\delta}}_{\widehat\bmu^n_{1:d-1}},\textrm{ }\gamma \leq B} \gamma\varepsilon + \int_{\RR}
F'\left(x_d; \gamma, \eta, \balpha, \bbeta \right)\widehat{\mu}_d^{(n)}(dx_d).
\end{align*}
Using the same argument on $I_{\bmu_1}^{\Lambda^{\delta}}(\varepsilon,K)$ gives us 
\begin{align*}
&\bI_{\bmu_1}^{\Lambda^{\delta}}(\varepsilon) = \inf_{\left(\gamma,\eta,  \boldsymbol{\alpha}, \boldsymbol{\beta}\right)\in\calS^{\Lambda^{\delta}}_{\widehat\bmu^n_{1:d-1}},\textrm{ }\gamma\leq B}\gamma\varepsilon + \int_{\RR}
F'\left(x_d; \gamma,\eta, \balpha, \bbeta\right)\mu_d(dx_d).
\end{align*}
The rest follows exactly the same arguments in the proof of Theorem \ref{thm.distance_n}.
\end{proof}
Finally, we prove Proposition \ref{prop.Lip_general_support}.
\begin{proof}[Proof of Proposition \ref{prop.Lip_general_support}]
By Corollary \ref{cor.dual_representation_general}, we have 
\begin{align*}
&\bI^{\Lambda}(\varepsilon) = 
\inf_{\left(\gamma,\eta,  \boldsymbol{\alpha}, \boldsymbol{\beta}\right)\in\calS^{\Lambda}_{\boldsymbol{\mu}_{1:d-1}}}\gamma\varepsilon + \int_{\RR}
F\left(x_d; \gamma,\eta, \boldsymbol{\alpha}, \boldsymbol{\beta}\right)\mu_d(dx_d). 
\end{align*}
Pick $\pi\in\calM_0(\bmu)$. Consider the diagonal coupling $\bpi\in\Pi(\pi, \pi)$, take $B = 2\sup_{\bx\in\Lambda}f(\bx)/R$, we have 
\begin{equation*}
\begin{aligned}
&\quad  \inf_{\left(\gamma,\eta,  \boldsymbol{\alpha}, \boldsymbol{\beta}\right)\in\calS^{\Lambda}_{\boldsymbol{\mu}_{1:d-1}}, \textrm{ }\gamma > B}\gamma\varepsilon  + \int_{\RR}
F\left(x_d; \gamma, \eta, \boldsymbol{\alpha}, \boldsymbol{\beta}\right)\mu_d(dx_d)\\
&\geq \inf_{ \left(\gamma,\eta,  \boldsymbol{\alpha}, \boldsymbol{\beta}\right)\in\calS^{\Lambda}_{\bmu_{1:d-1}}, \textrm{ }\gamma >  B}\gamma\varepsilon  +  \int\Bigg(\int\Bigg( f\left(\bx'\right)- \sum_{k=1}^{d-1}\beta_k(x_k)- \gamma\sum_{k=1}^{d}|x_k-x_k'| -  \sum_{k=1}^{d-1}\alpha_k(\bx_{1:k}')(x_{k+1}'-x_k') \\
&\qquad\qquad\qquad\qquad\qquad\qquad\qquad\qquad\qquad\qquad\qquad\qquad\qquad  +  \eta x_1'\Bigg)\bpi\left(d\bx_{1:d-1}, d\bx' \mid x_d\right)\Bigg)\mu_d(d x_d)\\
&\geq \inf_{ \left(\gamma,\eta,  \boldsymbol{\alpha}, \boldsymbol{\beta}\right)\in\calS^{\Lambda}_{\bmu_{1:d-1}}, \textrm{ }\gamma > B}\gamma\varepsilon + \EE_{\pi}\left[f(\bX')\right] - \gamma \calW(\pi, \pi) + \sum_{k=1}^{d-1}\EE_{\pi}\left[\alpha(\bX'_{1:k})(X'_{k+1} - X'_k)\right] + \EE_{\mu_1}[X]\\
&\geq \varepsilon B + \EE_{\pi}\left[f(\bX)\right] > \sup_{\bx\in\Lambda} f(\bx) \geq \bI^{\Lambda}(\varepsilon). 
\end{aligned}
\end{equation*}
Thus, for any $\varepsilon >R$, 
\begin{equation*}
\bI^{\Lambda}(\varepsilon) = 
\inf_{\left(\gamma,\eta,  \boldsymbol{\alpha}, \boldsymbol{\beta}\right)\in\calS^{\Lambda}_{\boldsymbol{\mu}_{1:d-1}}, \gamma \leq B}\gamma\varepsilon + \int_{\RR}
F\left(x_d; \gamma,\eta, \boldsymbol{\alpha}, \boldsymbol{\beta}\right)\mu_d(dx_d). 
\end{equation*}
As a result, for any $\varepsilon, \varepsilon' > R$, 
\begin{equation*}
\left|\bI^{\Lambda}(\varepsilon) - \bI^{\Lambda}(\varepsilon')\right|  \leq B|\varepsilon - \varepsilon'| = \frac{2\sup_{\bx\in\Lambda}f(\bx)}{R}|\varepsilon - \varepsilon'|.    
\end{equation*}

\end{proof}

\subsection{Proof of Theorem \ref{thm.non_compact_final_step}} \label{sect.proof_discretization_approximation_general}
We first prove an analogy of lemma \ref{lemma.discretize}. 

\begin{lemma}\label{lemma.discretize_general}
Let Assumption \ref{aspn.convexorder} hold. Let $\Lambda = \bigtimes_{i=1}^d[-iC, iC]$ for some constant $C>0$. Given $\varepsilon>
0$, and $\bnu\in\bigtimes_{i=1}^d\calP(\RR)$. Then, for any $\pi\in\calMM_{\bnu}^{\Lambda}(\varepsilon)$ and integer $N$, there exists a $2dC/N-$martingale measure $\pi^{(N)}$ such that 
\begin{enumerate}
\item [\emph{(1)}] For each $1\leq i\leq d$, $\pi_i^{(N)}$ is mean zero measure that supported on $\Lambda_{i}^N:= \{-iC + 2iC/N: 0\leq i\leq N\}$.
\item [\emph{(2)}] $\calW\left(\pi,\pi^{(N)}\right)\leq \frac{2d^{3/2}C}{N}$. 
\end{enumerate}
\end{lemma}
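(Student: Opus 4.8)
The plan is to reduce directly to Lemma~\ref{lemma.discretize}, noting that the domain $\Lambda$ fits its hypotheses and, crucially, that the discretization it produces automatically respects the mean-zero constraint. Concretely, write $\Lambda_i=[a_i,b_i]$ with $a_i=-iC$ and $b_i=iC$, so that $l_i:=b_i-a_i=2iC$ and $l:=\max_i l_i=l_d=2dC$; moreover $\Lambda_1\subseteq\cdots\subseteq\Lambda_d$, so the interval structure used in the proof of Lemma~\ref{lemma.discretize} is in force (that proof uses only that the $\Omega_i$ are nested compact intervals and that $\pi$ is a martingale on their product; the benchmark measures play no role). Under this identification the $N$-fold grid $\Omega_i^{(N)}$ of \eqref{eq.discrete_support} is exactly $\Lambda_i^N=\{-iC+k\cdot 2iC/N:0\le k\le N\}$.

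Applying Lemma~\ref{lemma.discretize} to $\pi\in\calMM_{\boldsymbol{\nu}}^{\Lambda}(\varepsilon)\subseteq\calM(\Lambda)$ then yields an $l/N$-martingale measure $\pi^{(N)}$, i.e.\ a $2dC/N$-martingale, whose $i$-th marginal is supported on $\Lambda_i^N$ and which satisfies $\calW(\pi,\pi^{(N)})\le l\sqrt d/N=2d^{3/2}C/N$. This immediately delivers the claimed martingale tolerance, the claimed Wasserstein bound (conclusion (2)), and the support part of conclusion (1).

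The one point that must be checked beyond quoting Lemma~\ref{lemma.discretize} is that each marginal $\pi_i^{(N)}$ has mean zero. For this I would revisit the explicit formula for $\pi_i^{(N)}$ in the proof of Lemma~\ref{lemma.discretize}: the mass sitting at a point $x\in I_{i,k}^{(-1)}$ is split between the two neighbouring grid points $a_i+\frac{(k-1)l_i}{N}$ and $a_i+\frac{kl_i}{N}$ with the affine weights $g_{i,k-1}^{(+1)}(x)$ and $g_{i,k}^{(-1)}(x)$, which sum to $1$; since $\pi$ is supported on $[a_i,b_i]$ no mass escapes the endpoint intervals, so this description is valid throughout. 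A one-line computation using $g_{i,k}^{(-1)}(x)=\frac{N}{l_i}(x-a_i)-(k-1)$ shows that the barycentre of this split is $x$ itself, whence $\int y\,\pi_i^{(N)}(dy)=\int x\,\pi_i(dx)$ after summing over $k$. As $\pi\in\calM_0(\Lambda)$ has mean-zero marginals, so does $\pi^{(N)}$, which establishes conclusion (1). I expect no genuine obstacle here: the entire content is the observation that the interpolation in Lemma~\ref{lemma.discretize} is barycentric and therefore mean-preserving, so $\pi^{(N)}$ inherits the centering property for free.
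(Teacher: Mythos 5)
Your proposal is correct and follows essentially the same route as the paper's own proof: apply Lemma \ref{lemma.discretize} with $a_i=-iC$, $b_i=iC$, $l=2dC$ to get the $2dC/N$-martingale property, the support claim and the bound $\calW(\pi,\pi^{(N)})\le l\sqrt d/N=2d^{3/2}C/N$, and then verify the mean-zero property via the explicit marginal formula. The paper performs the same barycentric computation you describe, showing $\EE_{\pi_i^{(N)}}[X]=\EE_{\pi_i}[X]=0$, so there is nothing missing.
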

\begin{proof}[Proof of Lemma \ref{lemma.discretize_general}]
Adapting the definitions in the proof of lemma \ref{lemma.discretize}. Given any martingale measure $\pi\in\calMM_{\bnu}^{\Lambda}(\varepsilon)$, let $\pi^{(N)}$ be the probability measure defined in \eqref{eq.discrete_pi} (with $a_i = -iC$, $b_i= iC$). By lemma \ref{lemma.discretize}, there exists a $2dC/N$-martingale measure  $\pi_i^{(N)}$ that is supported on $\Lambda_{i}^N$ and $\calW\left(\pi,\pi^{(N)}\right)\leq \frac{2d^{3/2}C}{N}$. Follows from straight forward calculation, we have the $i$th marginal of $\pi^{(N)}$ satisfies:
\begin{equation*}
\pi_i^{(N)}\left(-iC+\frac{2k_iiC}{N}\right)= \int_{I_{i,k_i}^{(-1)}}g_{i,k_i}^{(-1)}(x_i)\pi_i(dx_i) + \int_{I_{i,k_i}^{(+1)}}g_{i,k_i}^{(+1)}(x_i)\pi_i(dx_i), \qquad 0\leq k_i\leq N,
\end{equation*}
where the definition of $I_{i,k_i}^{(\pm1)}$ and $g_{i,k_i}^{(\pm1)}(x_i)$ follows from the proof of lemma \ref{lemma.discretize}. Finally, we check that the expectation of each $\pi_i^{(N)}$ are zero. By definition, we have 
\begin{align*}
\EE_{\pi_i^{(N)}}\left[X\right]&=\sum_{k_i=0}^N\left(-iC+\frac{2k_iiC}{N}\right)\pi_i^{(N)}\left(-iC+\frac{2k_iiC}{N}\right)\\
&= \sum_{k_i\in\ZZ}\int_{I_{i,k_i}^{(-1)}} \left[g_{i,k_i}^{(-1)}(x)\left(-iC+\frac{2k_iiC}{N}\right)+g_{i,k_i-1}^{(+1)}(x)\left(-iC+\frac{2(k_i-1)iC}{N}\right)\right]\pi_i(dx)\\
&=\sum_{k_i\in\ZZ}\int_{I_{i,k_i}^{(-1)}}x\pi_i(dx) \\
&=\EE_{\pi_i}\left[X\right]= 0.
\end{align*}
\end{proof}
\begin{proof}[Proof of Theorem \ref{thm.non_compact_final_step}]
Let $C = C'\sqrt{\log(1/\delta)}$. Then, $\Lambda^{\delta} = \bigtimes_{i=1}^d[-iC, iC]$. For any $\pi\in \calMM^{\Lambda^{\delta}}_{n}( \varepsilon) $, by Lemma \ref{lemma.discretize_general} we can construct a $2dC/N-$ martingale measure $\pi'$ that supported on $\Lambda^{
\delta, N}$, such that 
\begin{equation*}
\calW\left(\pi,\pi'\right)\leq \frac{2d^{3/2}C}{N}.
\end{equation*}
Note that
\begin{equation*}
d\left(\pi', \Pi(\widehat{\bmu}^n)\right) \leq \calW(\pi', \pi) + d\left(\pi, \Pi(\widehat{\bmu}^n)\right) \leq \varepsilon + \frac{2d^{3/2}C}{N}.
\end{equation*}
Thus, $\pi'\in \calMM_{n,N}^{\Lambda^{\delta}}\left( \varepsilon + \frac{2d^{3/2}C}{N},  \frac{2dC}{N}\right)$. Furthermore,
\begin{equation*}
\begin{aligned}
\EE_{\pi}\left[f(\bX)\right] - \bI_{n,N}^{\Lambda^{\delta}}\left( \varepsilon + \frac{2d^{3/2}C}{N},  \frac{2dC}{N}\right) &\leq \EE_{\pi}\left[f(\bX)\right] - \EE_{\pi'}\left[f(\bX)\right] \\
&\leq L \calW(\pi',\pi)\leq\frac{2d^{3/2}CL}{N}.
\end{aligned}
\end{equation*}
Taking the supremum over $\pi$ yields 
\begin{equation}\label{eq.discrete_approximation_general}
\bI_{n}^{\Lambda^{\delta}}(\varepsilon, K) - \bI_{n,N}^{\Lambda^{\delta}}\left( \varepsilon + \frac{2d^{3/2}C}{N},  \frac{2dC}{N}\right)\leq \frac{2d^{3/2}CL}{N}.
\end{equation}
Next we lower bounded the LHS of \eqref{eq.discrete_approximation_general}. Given $\delta'>0$, by the Corollary \ref{cor.dual_representation_general} and the proof of Theorem \ref{thm.distance_n_general}, for any $n\geq N'(\varepsilon, \delta')$, we have with probability ay least $1-\delta'$ that
\begin{equation}\label{eq.I_n_Omega_delta_bounded_dual_parameter}
\begin{aligned}
&\bI_{n}^{\Lambda^{\delta}}(\varepsilon) \\
&=  \inf_{\mbox{$\begin{subarray}{c} \left(\gamma,\eta,  \boldsymbol{\alpha}, \boldsymbol{\beta}\right)\in\calS^{\Lambda^{\delta}}_{\widehat\bmu^n_{1:d-1}},\\
\gamma\leq B^*\end{subarray}$} }\gamma\varepsilon + \int_{\RR}
F'\left(x_d; \gamma,\eta, \balpha, \bbeta\right)\widehat{\mu}_d^{(n)}(dx_d),
\end{aligned} 
\end{equation}
where, for notational convenience, we define
\begin{align*}
B^* &= \frac{4}{\varepsilon}\left(|f(\textbf{0})| + \frac{5}{4}\varepsilon L + L\sum_{i=1}^dm_1(\mu_i)\right) :=  \frac{4}{\varepsilon}\cdot C(\bmu,\varepsilon).   
\end{align*}
It is also useful to recall that $\bI^{
\Lambda^{\delta}}_n(\varepsilon) \leq C(\bmu, \varepsilon)$. In the rest of the proof, our analysis will concentrate on the event that equation \eqref{eq.I_n_Omega_delta_bounded_dual_parameter} hold. Note that on this event we have 
\begin{equation*}
    \sum_{k=1}^d\calW\left(\mu_k,\widehat\mu^{(n)}_k\right)\leq \varepsilon/2.
\end{equation*}
Recalled that $\Lambda^{\delta}_i = [-iC, iC]$ for $1\leq i\leq d$. We have 

\begin{align}
\nonumber&F'\left(x_d; \gamma,\eta, \balpha, \bbeta \right)\\ 
\nonumber&\geq \sup_{\bx_{1:d-1}\in\RR^{d-1}} \left\lbrace \sup_{\bx'\in\Lambda^{\delta}}\left\lbrace  \sum_{k=1}^{d-1}\alpha_k(\bx_{1:k}')(x_{k+1}'-x_k') + \eta x_1'\right\rbrace   + \inf_{\bx'\in\Lambda}f\left(\bx'\right) -\right.\\
\nonumber& \qquad\qquad\qquad\qquad\qquad\qquad\qquad\qquad\qquad\qquad \left. - \sum_{k=1}^{d-1}\beta_k(x_k) - \gamma\sum_{k=1}^d \sup_{x_k'\in\Lambda_k^{\delta}}|x_k-x_k'|\right\rbrace\\
\nonumber&\geq \sup_{\bx_{1:d-1}\in\RR^{d-1}} \left\lbrace \sup_{\bx'\in\Lambda^{\delta}}\left\lbrace  \sum_{k=1}^{d-1}\alpha_k(\bx_{1:k}')(x_{k+1}'-x_k') + \eta x_1'\right\rbrace  - \sum_{k=1}^{d-1}\beta_k(x_k) - B^*\sum_{k=1}^d |x_k| \right\rbrace-C(\bmu,\varepsilon)\\
\nonumber &\geq \int \left( \sup_{\bx'\in\Lambda^{\delta}}\left\lbrace  \sum_{k=1}^{d-1}\alpha_k(\bx_{1:k}')(x_{k+1}'-x_k') + \eta x_1'\right\rbrace  - \sum_{k=1}^{d-1}\beta_k(x_k) - B^*\sum_{k=1}^d |x_k| \right)\pi(d\bx_{1:d-1})-C(\bmu,\varepsilon)\\
&= \sup_{\bx'\in\Lambda^{\delta}}\left\lbrace  \sum_{k=1}^{d-1}\alpha_k(\bx_k')(x_{k+1}'-x_k') + \eta x_1'\right\rbrace -B^*\sum_{k=1}^{d-1}m_1\left(\widehat{\mu}_k^{(n)}\right) -B^*|x_d|-C(\bmu,\varepsilon), \label{eq.bounded_alpha_eta'}
\end{align}
where $\pi\in\Pi\left(\widehat\bmu^n\right)$ and we have used the fact that $\beta_{k}$'s are in $\calS^{\Lambda^{\delta}}_{\widehat\bmu^n_{1:d-1}}$. Suppose $|\eta|\geq B'$ for some $B'>0$, taking $x_1'=\cdots=x_d'$ yields  
\begin{align*}
&\sup_{\bx'\in\Lambda^{\delta}}\left\lbrace  \sum_{k=1}^{d-1}\alpha_k(\bx_k')(x_{k+1}'-x_k') + \eta x_1'\right\rbrace\geq\sup_{x_1'\in\Lambda^{\delta}_1}\eta x_1'\geq B'C.
\end{align*}
Now we arrive at
\begin{align*}
&\inf_{\mbox{$\begin{subarray}{c} \left(\gamma,\eta, \boldsymbol{\alpha}, \boldsymbol{\beta}\right)\in\calS^{\Lambda^{\delta}}_{\widehat\bmu^n_{1:d-1}},\\
\gamma\leq B^*,|\eta|\geq B'\end{subarray}$} }\gamma\varepsilon + \int_{\RR}
F'\left(x_d; \gamma,\eta, \balpha, \bbeta \right)\widehat{\mu}_d^{(n)}(dx_d)\\
&\geq B'C - B^*\sum_{k=1}^{d}m_1\left(\widehat{\mu}_k^{(n)}\right)-C(\bmu, \varepsilon)\\
&\geq B'C - B^*\left[\sum_{k=1}^dm_1(\mu_k) + \frac{\varepsilon}{2}\right]-C(\bmu, \varepsilon).
\end{align*}
Where we have used the fact that $\sum_{k=1}^d\calW\left(\mu_k,\widehat\mu^{(n)}_k\right)\leq \varepsilon/2$ and $x\rightarrow |x|$ is a 1-Lipschitz map. For $B'$ sufficiently large, say 
$$B' = \frac{1}{C}\left[2C(\bmu,\varepsilon)+ B^*\left(\sum_{k=1}^dm_1(\mu_k) + \varepsilon\right)\right], $$
we have 
\begin{equation*}
\inf_{\mbox{$\begin{subarray}{c} \left(\gamma,\eta,\eta, \boldsymbol{\alpha}, \boldsymbol{\beta}\right)\in\calS^{\Lambda^{\delta}}_{\widehat\bmu^n_{1:d-1}}(\varepsilon),\\
0\leq\gamma,\eta\leq B^*,|\eta|\geq B'\end{subarray}$} }\gamma\varepsilon +\eta K + \int_{\RR}
F'\left(x_d; \gamma,\eta,\eta, \balpha, \bbeta \right)\widehat{\mu}_d^{(n)}(dx_d) > C(\bmu,\varepsilon) > I^{\Lambda^{\delta}}_n(\varepsilon, K),
\end{equation*}
which implies the optimal $\eta$ is essentially bounded by $B'$. That is,
\begin{equation*}
\begin{aligned}
&\bI_{n}^{\Lambda^{\delta}}(\varepsilon) \\
&=  \inf_{\mbox{$\begin{subarray}{c} \left(\gamma,\eta,  \boldsymbol{\alpha}, \boldsymbol{\beta}\right)\in\calS^{\Lambda^{\delta}}_{\widehat\bmu^n_{1:d-1}},\\
\gamma\leq B^*, |\eta|\leq B'\end{subarray}$} }\gamma\varepsilon + \int_{\RR}
F'\left(x_d; \gamma,\eta, \balpha, \bbeta\right)\widehat{\mu}_d^{(n)}(dx_d),
\end{aligned} 
\end{equation*}
Deduced from \eqref{eq.bounded_alpha_eta'}, we have 
\begin{align*}
&\int_{\RR} F'\left(x_d; \gamma,\eta, \balpha, \bbeta \right)\widehat\mu^{(n)}_d(d x_d)\\
&\geq  \sup_{\bx'\in\Lambda^{\delta}}\left\lbrace  \sum_{k=1}^{d-1}\alpha_k(\bx_{1:k}')(x_{k+1}'-x_k') \right\rbrace -B'C-B^*\sum_{k=1}^{d}m_1\left(\widehat{\mu}_k^{(n)}\right)
-C(\bmu,\varepsilon)\\
&\geq \sup_{\bx'\in\Lambda^{\delta}}\left\lbrace  \sum_{k=1}^{d-1}\alpha_k(\bx_{1:k}')(x_{k+1}'-x_k') \right\rbrace -B'C-B^*\left[\sum_{k=1}^{d}m_1\left(\mu_k\right)+ \varepsilon/2\right]-C(\bmu,\varepsilon)\\
&\geq \sup_{\bx'\in\Lambda^{\delta}}\left\lbrace  \sum_{k=1}^{d-1}\alpha_k(\bx_{1:k}')(x_{k+1}'-x_k') \right\rbrace - 2B'C.
\end{align*}
Now, by the same induction arguments in the proof of Theorem \ref{thm.discretize_approximation}, we can show that 
\begin{align*}
&\bI_{n}^{\Lambda^{\delta}}(\varepsilon) \\
&=  \inf_{\mbox{$\begin{subarray}{c} \left(\gamma,\eta,  \boldsymbol{\alpha}, \boldsymbol{\beta}\right)\in\calS^{\Lambda^{\delta}}_{\widehat\bmu^n_{1:d-1}},\\
\gamma\leq B^*,|\eta|\leq B', \|\alpha_k\|_{\infty}\leq B_k,1\leq k\leq d-1.\end{subarray}$} }\gamma\varepsilon + \int_{\RR}
F'\left(x_d; \gamma,\eta, \balpha, \bbeta \right)\widehat{\mu}_d^{(n)}(dx_d).
\end{align*}
where the constants
\begin{equation*}
B_1 = \frac{1}{C}\left(2B'C+C(\bmu,\varepsilon)\right), \quad B_k\leq (1+2d)^{k-1}B_1 \textrm{ for all } 1\leq k\leq d-1.
\end{equation*}
Define
\begin{equation*}
    \bI^{\Lambda}_n(\varepsilon,\delta) = \sup_{\pi\in\calMM^{\Lambda}_n(\varepsilon,\delta)}\EE_{\pi} [f(\bX)],
\end{equation*}
where 
\begin{equation*}
\calMM^{\Lambda}_{n}( \varepsilon, 
\delta) := \left\lbrace \pi\in\calM_0\left(\Lambda;\delta\right):  d\left(\pi, \Pi\left(\boldsymbol{\widehat\mu}^{(n)}\right)\right)\leq\varepsilon\right\rbrace.
\end{equation*}
By the same argument in the proof of Theorem \ref{thm.discretize_approximation}, we also have the following strong duality 
\begin{align*}
&\bI^{\Lambda^{\delta}}_{n}\left(\varepsilon + \frac{2d^{3/2}C}{N},  \frac{2dC}{N}\right)= \inf_{\mbox{$\begin{subarray}{c} \left(\gamma,\eta,  \boldsymbol{\alpha}, \boldsymbol{\beta}\right)\in\calS^{\Lambda^{\delta}}_{\widehat\bmu^n_{1:d-1}}\end{subarray}$} } \gamma\left(\varepsilon + \frac{2d^{3/2}C}{N}\right)  +  \int_{\RR}
\widetilde{F}'\left(x_d; \gamma,\eta,\balpha, \bbeta\right)\widehat{\mu}_d^{(n)}(dx_d),
\end{align*}
where 
\begin{equation*}
\begin{aligned}
\widetilde{F}'\left(x_d; \gamma,\eta, \balpha, \bbeta\right) : = \sup_{\mbox{$\begin{subarray}{c} \bx\in\RR^{d-1}, \bx'\in\Lambda^{\delta}
\end{subarray}$}}\left\lbrace f\left(\bx'\right)- \sum_{k=1}^{d-1}\beta_k(x_k)-\gamma\sum_{k=1}^{d}|x_k-x_k'| + \right. \\ 
\left.  \sum_{k=1}^{d-1}\frac{2dC}{N}|\alpha_k(\bx_{1:k}')|+\sum_{k=1}^{d-1}\alpha_k(\bx_{1:k}')(x_{k+1}'-x_k') + \eta x_1'\right\rbrace.
\end{aligned}
\end{equation*}
Finally, we apply the same trick in the proof of Theorem \ref{thm.discretize_approximation}. For any $\delta'>0$, we can pick 
\begin{equation*}
    \left(\gamma_{\delta'}, \eta_{\delta'},\balpha_{\delta'}, \bbeta_{\delta'}\right)\in \calS^{\Lambda^{\delta'}}_{\widehat\bmu^n_{1:d-1}}(\varepsilon) \cap \left\lbrace \gamma, \leq B^*, |\eta|\leq B', \|\alpha_i\|_{\infty}\leq B_i, (1\leq i\leq d-1)\right\rbrace,
\end{equation*}
such that
\begin{equation*}
\gamma_{\delta'}\varepsilon + \int_{\RR}
F'\left(x_d; \gamma_{\delta'},\eta_{\delta'}', \balpha_{\delta'}, \bbeta_{\delta'}\right)\widehat{\mu}_d^{(n)}(dx_d)\leq \bI_{n}^{\Lambda^{\delta}}(\varepsilon)+\delta'.
\end{equation*}
Hence, 
\begin{align*}
&\bI^{\Lambda^{\delta}}_{n, N}\left(\varepsilon + \frac{2d^{3/2}C}{N},  \frac{2dC}{N}\right) - \bI_{n}^{\Lambda^{\delta}}(\varepsilon)\\
\leq\quad & \bI^{\Lambda^{\delta}}_{n}\left(\varepsilon + \frac{2d^{3/2}C}{N},  \frac{2dC}{N}\right) - \bI_{n}^{\Lambda^{\delta}}(\varepsilon)\\
\leq\quad & \gamma_{\delta'} \frac{2d^{3/2}C}{N} + \int_{\RR}\left(\widetilde{F}'(x_d;\gamma_{\delta'},\eta_{\delta'},\balpha_{\delta'}, \bbeta_{\delta'}) - F'(x_d;\gamma_{\delta'},\eta_{\delta'},\balpha_{\delta'}, \bbeta_{\delta'})\right)\widehat{\mu}_d^{(n)}(dx_d) + \delta'\\
\leq\quad & B^*\frac{2d^{3/2}C}{N} + \frac{2dC}{N}\sum_{k=1}^{d-1}(1+2d)^{k-1}B_1 +\delta'\\
\leq\quad & \frac{1}{N}\left[B^*2d^{3/2}C + (2B'C+C(\bmu, \varepsilon))(1+2d)^{d-1}\right] + \delta'.
\end{align*}
Letting $\delta'$ goes to zero, together with \eqref{eq.discrete_approximation_general}, we get

\begin{small}
\begin{equation*}
\left|\bI_{n}^{\Lambda^{\delta}}(\varepsilon) - \bI^{\Lambda^{\delta}}_{n, N}\left(\varepsilon + \frac{2d^{3/2}C}{N},  \frac{2dC}{N}\right)\right|\leq\frac{C(\bmu, L,\gamma, d)}{\varepsilon}
\cdot \frac{\sqrt{\log(1/\delta)}}{N}.
\end{equation*}
\end{small}
where $C(\bmu, L, \gamma, d)$ is a constant only depends on $\bmu$,  $L$, $\gamma$ and $d$.
\end{proof}
\clearpage
\bibliography{martDRO.bib}

\newcommand{\etalchar}[1]{$^{#1}$}
\begin{thebibliography}{GHLT14}

\bibitem[BCH17]{OPTandSkorokhodEmbedding_Beiglbock2017}
Mathias Beiglb{\"o}ck, Alexander M.~G. Cox, and Martin Huesmann.
\newblock Optimal transport and skorokhod embedding.
\newblock {\em Inventiones mathematicae}, 208(2):327--400, May 2017.

\bibitem[BHLP13]{model_indep_bound_Beiglbock2013}
Mathias Beiglb{\"o}ck, Pierre Henry-Labord{\`e}re, and Friedrich Penkner.
\newblock Model-independent bounds for option prices---a mass transport
  approach.
\newblock {\em Finance and Stochastics}, 17(3):477--501, Jul 2013.

\bibitem[BJ16]{MOT_beiglbock2016}
Mathias Beiglböck and Nicolas Juillet.
\newblock On a problem of optimal transport under marginal martingale
  constraints.
\newblock {\em Ann. Probab.}, 44(1):42--106, 01 2016.

\bibitem[BK16]{SOS_inference_Jose2016}
Jose {Blanchet} and Yang {Kang}.
\newblock Sample out-of-sample inference based on wasserstein distance.
\newblock {\em arXiv e-prints}, page arXiv:1605.01340, May 2016.

\bibitem[BK17]{DROgroupLasso_Jose2017}
Jose~H. Blanchet and Yang Kang.
\newblock Distributionally robust groupwise regularization estimator.
\newblock In {\em {ACML}}, volume~77 of {\em Proceedings of Machine Learning
  Research}, pages 97--112. {PMLR}, 2017.

\bibitem[BKM16]{RWP_Jose2016}
Jose {Blanchet}, Yang {Kang}, and Karthyek {Murthy}.
\newblock Robust wasserstein profile inference and applications to machine
  learning.
\newblock {\em arXiv e-prints}, page arXiv:1610.05627, Oct 2016.

\bibitem[BLO17]{dualattain}
Mathias Beiglboeck, Tongseok Lim, and Jan Obloj.
\newblock Dual attainment for the martingale transport problem.
\newblock {\em arXiv preprint arXiv:1705.4273}, 2017.

\bibitem[BM16]{DR_model_risk_Jose_2016}
Jose {Blanchet} and Karthyek R.~A. {Murthy}.
\newblock Quantifying distributional model risk via optimal transport.
\newblock {\em arXiv e-prints}, page arXiv:1604.01446, Apr 2016.

\bibitem[BNT17]{completeDualMOT_Touzi2017}
Mathias Beiglböck, Marcel Nutz, and Nizar Touzi.
\newblock Complete duality for martingale optimal transport on the line.
\newblock {\em Ann. Probab.}, 45(5):3038--3074, 09 2017.

\bibitem[BVP19]{backhoffveraguas_stability_2019}
Julio Backhoff-Veraguas and Gudmund Pammer.
\newblock Stability of martingale optimal transport and weak optimal transport,
  2019.

\bibitem[DGN16]{stats_RO_EL_Duchi2016}
John {Duchi}, Peter {Glynn}, and Hongseok {Namkoong}.
\newblock Statistics of robust optimization: A generalized empirical likelihood
  approach.
\newblock {\em arXiv e-prints}, page arXiv:1610.03425, Oct 2016.

\bibitem[DJP00]{dupuis2000robust}
Paul Dupuis, Matthew~R James, and Ian Petersen.
\newblock Robust properties of risk-sensitive control.
\newblock {\em Mathematics of Control, Signals and Systems}, 13(4):318--332,
  2000.

\bibitem[DN16]{duchi2016variance}
John Duchi and Hongseok Namkoong.
\newblock Variance-based regularization with convex objectives.
\newblock {\em arXiv preprint arXiv:1610.02581}, 2016.

\bibitem[DN18]{LearningPerformance_DRO_Duchi2018}
John {Duchi} and Hongseok {Namkoong}.
\newblock Learning models with uniform performance via distributionally robust
  optimization.
\newblock {\em arXiv e-prints}, page arXiv:1810.08750, Oct 2018.

\bibitem[DS14]{MOT_continuous_Dolinsky2014}
Yan Dolinsky and H.~Mete Soner.
\newblock Martingale optimal transport and robust hedging in continuous time.
\newblock {\em Probability Theory and Related Fields}, 160(1):391--427, Oct
  2014.

\bibitem[DY10]{DROwithMomentUncertainty_Ye2010}
Erick Delage and Yinyu Ye.
\newblock Distributionally robust optimization under moment uncertainty with
  application to data-driven problems.
\newblock {\em Oper. Res.}, 58(3):595--612, May 2010.

\bibitem[EGLO19]{Obloj_robustPricingMultiple_2019}
Stephan Eckstein, Gaoyue Guo, Tongseok Lim, and Jan Obloj.
\newblock Robust pricing and hedging of options on multiple assets and its
  numerics, 2019.

\bibitem[FG15]{fournier2015rate}
Nicolas Fournier and Arnaud Guillin.
\newblock On the rate of convergence in wasserstein distance of the empirical
  measure.
\newblock {\em Probability Theory and Related Fields}, 162(3-4):707--738, 2015.

\bibitem[GHLT14]{stochasticControl_MOT_galichon2014}
A.~Galichon, P.~Henry-Labordère, and N.~Touzi.
\newblock A stochastic control approach to no-arbitrage bounds given marginals,
  with an application to lookback options.
\newblock {\em Ann. Appl. Probab.}, 24(1):312--336, 02 2014.

\bibitem[GK16]{DROwithWasserstein_GAO_2016}
Rui {Gao} and Anton~J. {Kleywegt}.
\newblock Distributionally robust stochastic optimization with wasserstein
  distance.
\newblock {\em arXiv e-prints}, page arXiv:1604.02199, Apr 2016.

\bibitem[GKL19]{MOT_generalDimension_Lim2019}
Nassif Ghoussoub, Young-Heon Kim, and Tongseok Lim.
\newblock Structure of optimal martingale transport plans in general
  dimensions.
\newblock {\em Ann. Probab.}, 47(1):109--164, 01 2019.

\bibitem[GO19]{guo_computational_2019}
Gaoyue Guo and Jan Obloj.
\newblock Computational methods for martingale optimal transport problems.
\newblock {\em The Annals of Applied Probability}, 29(6):3311--3347, 2019.

\bibitem[HK15]{robustPriceBound_Hobson2015}
David Hobson and Martin Klimmek.
\newblock Robust price bounds for the forward starting straddle.
\newblock {\em Finance and Stochastics}, 19(1):189--214, Jan 2015.

\bibitem[HS01]{hansen2001robust}
LarsPeter Hansen and Thomas~J Sargent.
\newblock Robust control and model uncertainty.
\newblock {\em American Economic Review}, 91(2):60--66, 2001.

\bibitem[HS08]{hansen2008robustness}
Lars~Peter Hansen and Thomas~J Sargent.
\newblock {\em Robustness}.
\newblock Princeton university press, 2008.

\bibitem[Hub04]{huber2004robust}
Peter~J Huber.
\newblock {\em Robust statistics}, volume 523.
\newblock John Wiley \& Sons, 2004.

\bibitem[Lim16]{multi_MOT_Lim2016}
Tongseok Lim.
\newblock Multi-martingale optimal transport.
\newblock {\em arXiv preprint arXiv:1611.01496}, 2016.

\bibitem[MEK18]{dataDrivenDRO_tractableFormulation_Kuhn2018}
Peyman Mohajerin~Esfahani and Daniel Kuhn.
\newblock Data-driven distributionally robust optimization using the
  wasserstein metric: performance guarantees and tractable reformulations.
\newblock {\em Mathematical Programming}, 171(1):115--166, Sep 2018.

\bibitem[ND16]{SGM_DRO_f-divergence_Duchi2016}
Hongseok Namkoong and John~C Duchi.
\newblock Stochastic gradient methods for distributionally robust optimization
  with f-divergences.
\newblock In D.~D. Lee, M.~Sugiyama, U.~V. Luxburg, I.~Guyon, and R.~Garnett,
  editors, {\em Advances in Neural Information Processing Systems 29}, pages
  2208--2216. Curran Associates, Inc., 2016.

\bibitem[NST20]{NUTZ_multiMOT_2020}
Marcel Nutz, Florian Stebegg, and Xiaowei Tan.
\newblock Multiperiod martingale transport.
\newblock {\em Stochastic Processes and their Applications}, 130(3):1568 --
  1615, 2020.

\bibitem[NZB{\etalchar{+}}20]{jose2020DRO_conditional_estimation}
Viet~Anh Nguyen, Fan Zhang, Jose Blanchet, Erick Delage, and Yinyu Ye.
\newblock Distributionally robust local non-parametric conditional estimation.
\newblock In H.~Larochelle, M.~Ranzato, R.~Hadsell, M.~F. Balcan, and H.~Lin,
  editors, {\em Advances in Neural Information Processing Systems}, volume~33,
  pages 15232--15242. Curran Associates, Inc., 2020.

\bibitem[OW18]{statsSuperhedge_obloj2018}
Jan {Obloj} and Johannes {Wiesel}.
\newblock {Statistical estimation of superhedging prices}.
\newblock {\em arXiv e-prints}, page arXiv:1807.04211, Jul 2018.

\bibitem[PP14]{pflug2014multistage}
Georg~Ch Pflug and Alois Pichler.
\newblock {\em Multistage stochastic optimization}.
\newblock Springer, 2014.

\bibitem[Sio58]{sion1958general}
Maurice Sion.
\newblock On general minimax theorems.
\newblock {\em Pacific Journal of mathematics}, 8(1):171--176, 1958.

\bibitem[SND18]{sinha2018certifying}
Aman Sinha, Hongseok Namkoong, and John Duchi.
\newblock Certifying some distributional robustness with principled adversarial
  training.
\newblock In {\em International Conference on Learning Representations}, 2018.

\bibitem[Str65]{strassen1965}
V.~Strassen.
\newblock The existence of probability measures with given marginals.
\newblock {\em Ann. Math. Statist.}, 36(2):423--439, 04 1965.

\end{thebibliography}

\end{document}